\newcommand{\OMIT}[1]{}
\newcommand{\pd}[2]{\frac{\partial #1}{\partial #2}}
\newtheorem{theorem}{Theorem}[section]
\newtheorem{lemma}{Lemma}[section]
\newtheorem{myremark}{Remark}[section]
\definecolor{mygreen}{RGB}{28,172,0} 
\definecolor{mylilas}{RGB}{170,55,241}
\begin{document}

\lstset{language=Matlab,%
    basicstyle=\footnotesize\ttfamily,
    breaklines=true,%
    morekeywords={matlab2tikz},
    keywordstyle=\color{blue},%
    morekeywords=[2]{1}, keywordstyle=[2]{\color{black}},
    identifierstyle=\color{blue},%
    stringstyle=\color{mylilas},
    commentstyle=\color{mygreen},%
    showstringspaces=false,
    numbers=left,%
    numberstyle={\tiny \color{black}},
    numbersep=9pt, 
    emph=[1]{for,end,break},emphstyle=[1]\color{red}, 
}

\begin{frontmatter}

\journal{International Journal for Numerical Methods in Engineering}

\title{Tuned Hybrid Non-Uniform Subdivision Surfaces with Optimal Convergence Rates}

\author[2]{Xiaodong Wei}
\author[1]{Xin Li\corref{cor1}}
\author[3]{Yongjie Jessica Zhang}
\author[4]{Thomas J.R. Hughes}
\cortext[cor1]{Corresponding author, lixustc@ustc.edu.cn, tel: +86-551-63607202}
\address[1]{School of Mathematical Sciences, University of Science and Technology of China, Hefei, Anhui, China}
\address[2]{Institute of Mathematics, \'Ecole Polytechnique F\'ed\'erale de Lausanne, 1015 Lausanne, Switzerland}
\address[3]{Department of Mechanical Engineering, Carnegie Mellon University, Pittsburgh, PA 15213, USA}
\address[4]{Oden Institute, The University of Texas at Austin, Austin, TX 78712, United States}

\begin{abstract}
This paper presents an enhanced version of our previous work, hybrid non-uniform subdivision surfaces~\cite{xin18}, to achieve optimal convergence rates in isogeometric analysis. We introduce a parameter $\lambda$ ($\frac{1}{4}<\lambda<1$) to control the rate of shrinkage of irregular regions, so the method is called tuned hybrid non-uniform subdivision (tHNUS). Our previous work corresponds to the case when $\lambda=\frac{1}{2}$. While introducing $\lambda$ in hybrid subdivision significantly complicates the theoretical proof of $G^1$ continuity around extraordinary vertices, reducing $\lambda$ can recover the optimal convergence rates when tuned hybrid subdivision functions are used as a basis in isogeometric analysis. From the geometric point of view, the tHNUS retains comparable shape quality as \cite{xin18} under non-uniform parameterization. Its basis functions are refinable and the geometric mapping stays invariant during refinement. Moreover, we prove that a tuned hybrid subdivision surface is globally $G^1$-continuous. From the analysis point of view, tHNUS basis functions form a non-negative partition of unity, are globally linearly independent, and their spline spaces are nested. We numerically demonstrate that tHNUS basis functions can achieve optimal convergence rates for the Poisson's problem with non-uniform parameterization around extraordinary vertices.
\end{abstract}

\begin{keyword}
Non-Uniform Subdivision \sep Extraordinary Vertex \sep Optimal Convergence Rates \sep Isogeometric Analysis
\end{keyword}

\end{frontmatter}


\section{Introduction}

Isogeometric analysis (IGA) has emerged as a powerful technology to unify geometric modeling and numerical
simulation~\cite{ref:hughes05, ref:cottrell09}, which employs the same basis functions used in computer-aided
design (CAD) and simulations. IGA has grown into a large family of numerical methods incorporating various spline techniques, such as NURBS (Non-Uniform Rational B-Splines)~\cite{ref:hughes05}, hierarchical B-splines~\cite{ref:vuong11}, T-splines~\cite{ref:sederberg04, Li10, ScLiSeHu10, ASTDual12, xinli_arbitrary, xin_as++, ref:wei17, xin_refinement}, polynomial splines over T-meshes~\cite{PHT08}, and locally refinable B-splines~\cite{LRBspline13}.

The study of extraordinary vertices\footnote{An interior vertex in a quadrilateral mesh is called an extraordinary vertex if it is shared by other than four faces.} has been one of the most active research directions in IGA because they are inevitable in complex watertight geometric representations. Along this direction, simultaneously fulfilling the requirements from both design and analysis is a significant challenge. Numerous methods have been developed over the past few years, but among them, only a few constructions can achieve optimal convergence rates in IGA, such as geometrically smooth multi-patch construction~\cite{ref:collin16, ref:kapl17}, degenerated B\'{e}zier construction~\cite{ref:tnguyen16, c0spline, ref:utspline19}, manifold-based construction~\cite{manifold}, and blended $C^0$ construction for unstructured hexahedral meshes~\cite{c03d}. A common simplification in all these constructions is to adopt uniform parameterization around extraordinary vertices, i.e., the surrounding knot intervals are assumed to be the same. While the support of non-uniform parameterization is a necessary step forward to be compatible with the current industry standard in CAD, i.e., NURBS, the related study on the above-mentioned constructions has not been reported in the literature.

On the other hand, subdivision methods, as a generalization of splines, provide a flexible means to deal with extraordinary vertices, where an infinite series of spline patches are smoothly joined around extraordinary vertices. The combination of flexibility and global smoothness makes them not only the standard in the computer animation industry but also a promising candidate for IGA. Indeed, some of the subdivision methods have been studied in the context of IGA, such as the use of Loop subdivision in thin-shell analysis~\cite{CiOrShr00} and the development of Catmull-Clark solids~\cite{burck2010}. However, several challenging problems need to be carefully investigated before we can fully leverage the power of subdivision methods, such as developing efficient quadrature rules to integrate infinite piecewise polynomials around extraordinary vertices~\cite{ref:juttler16, ref:barendrecht18}, supporting non-uniform parameterizations to be compatible with NURBS~\cite{Sederberg98, Muller06, ref:cashman09, Muller10, xin16}, and recovering optimal convergence rates \cite{ref:ma19}. This paper intends to address both non-uniform parameterization and optimal convergence behavior at the same time.

The present work is a follow-up of our preceding work on hybrid non-uniform subdivision (HNUS)~\cite{xin18}, which generalizes bicubic NURBS to arbitrary topology with proved $G^1$ continuity around extraordinary vertices. HNUS features high quality in geometric modeling under non-uniform parameterization. When applied to IGA, HNUS basis functions are not optimal but lead to improved convergence rates compared to Catmull-Clark subdivision.

Motivated by the idea of tuned Catmull-Clark subdivision \cite{ref:ma19} under uniform parameterization, we introduce a parameter $\lambda\in (\frac{1}{4},1)$ in HNUS to control the shrinkage rate in irregular regions such that we can recover optimal convergence. The enhanced version of HNUS is therefore called \emph{tuned hybrid non-uniform subdivision} (tHNUS). In fact, the parameter $\lambda$ is the subdominant eigenvalue (the 2nd and 3rd eigenvalues which are equal) of the tHNUS subdivision matrix, that plays a crucial role in surface continuity \cite{Reif95} as well as the convergence performance \cite{ref:ma19}. Note that tHNUS coincides with the original HNUS when $\lambda=\frac{1}{2}$. From the geometric point of view, tHNUS retains comparable shape quality as HNUS. Its basis functions are refinable and the geometric mapping stays invariant during refinement. Moreover, we prove that the tHNUS surface is globally $G^1$-continuous. From the analysis point of view, tHNUS basis functions form a non-negative partition of unity, are globally linearly independent, and their spline spaces are nested. Moreover, we numerically demonstrate that tHNUS can achieve optimal convergence rates in the Poisson's problem by reducing $\lambda$, regardless of whether parameterization around extraordinary vertices is uniform or not. As an interesting side product, we also show that simply applying the standard Gauss quadrature rule to every element (close to or far away from extraordinary vertices) in tHNUS does not influence simulation accuracy or convergence.

The reminder of the paper is organized as follows. Section \ref{sec:ehs} presents the subdivision rules of tHNUS. The proof of $G^1$ continuity for tHNUS surfaces is given in Section \ref{sec:proof}. The tHNUS basis functions are derived and their properties are discussed in Section \ref{sec:basis}. In Section \ref{sec:result}, we present numerical tests of both geometric modeling and IGA. Section \ref{sec:con} concludes the paper and discusses the future work.

\section{Tuned hybrid non-uniform subdivision surfaces}
\label{sec:ehs}

Our discussion assumes that the input control mesh is a regular manifold mesh where all the faces are quadrilaterals. If initially a mesh has polygonal faces, we apply a single NURSS (Non-Uniform Recursive Subdivision Surface) refinement~\cite{Sederberg98,xin16} to obtain an all-quadrilateral mesh. A non-negative scalar, which is called the \emph{knot interval}, is assigned to each edge of the control mesh. We further assume that in each face, the knot intervals on the opposite edges coincide. A non-uniform parameterization is obtained by assigning different knot intervals to different edges as long as the assumption for knot intervals holds.

The tHNUS consists of two sets of rules: the topological rules to manipulate mesh connectivity, and the geometric rules to update the coordinates of involved control points. Each set of rules can be further divided into the first level and the subsequent levels. All the rules of tHNUS coincide with those of the original HNUS~\cite{xin18} except for the geometric rule corresponding to the subsequent  levels. We will concisely cover all the rules in the following to keep the explanation self-contained. One may refer to~\cite{xin18} for more details.

We start with the topological rules of tHNUS, which consist of rules for the first level and the subsequent levels, as illustrated in Figure~\ref{fig:topo}. The rule corresponding to the first level converts the input quadrilateral mesh to its hybrid counterpart. Each extraordinary vertex is replaced by a polygonal face, whereas each spoke edge\footnote{A spoke edge is an edge touching a certain extraordinary vertex.} is replaced by a quadrilateral face. To make the resulting mesh conforming, additional vertices and edges are further replaced by certain faces; see Figure~\ref{fig:topo}(a). Note that all the edges of a newly added polygonal face have a zero knot interval. Under the assumption of knot intervals, this means that all the newly added faces have a zero (parametric) measure. In regular regions, introducing zero-knot-interval edges leads to a reduction in continuity of basis functions from $C^2$ to $C^1$.

\begin{figure}[htbp]
\begin{center}
\begin{tabular}{cc}
\includegraphics[width=0.45\textwidth]{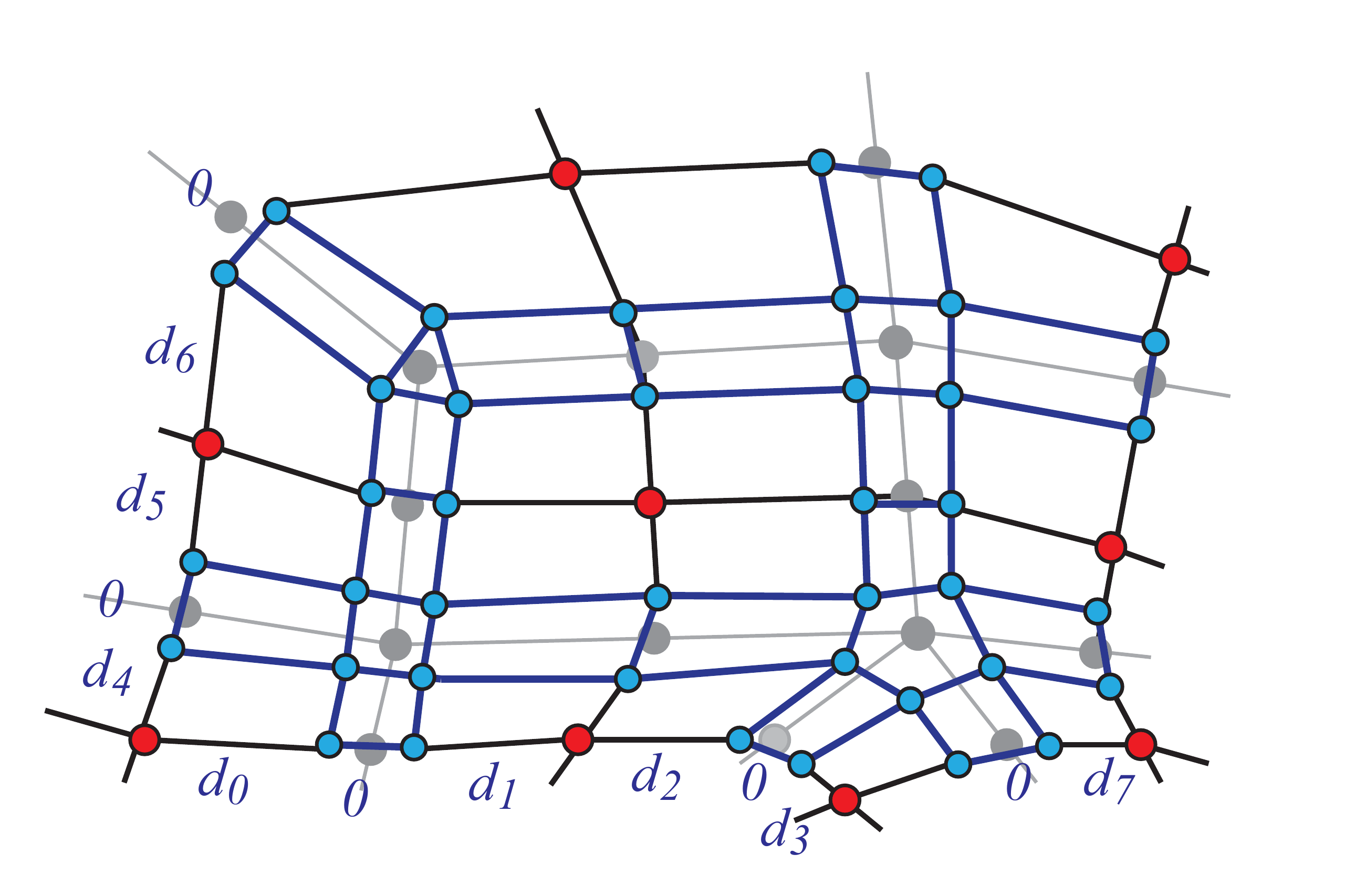}&
\includegraphics[width=0.45\textwidth]{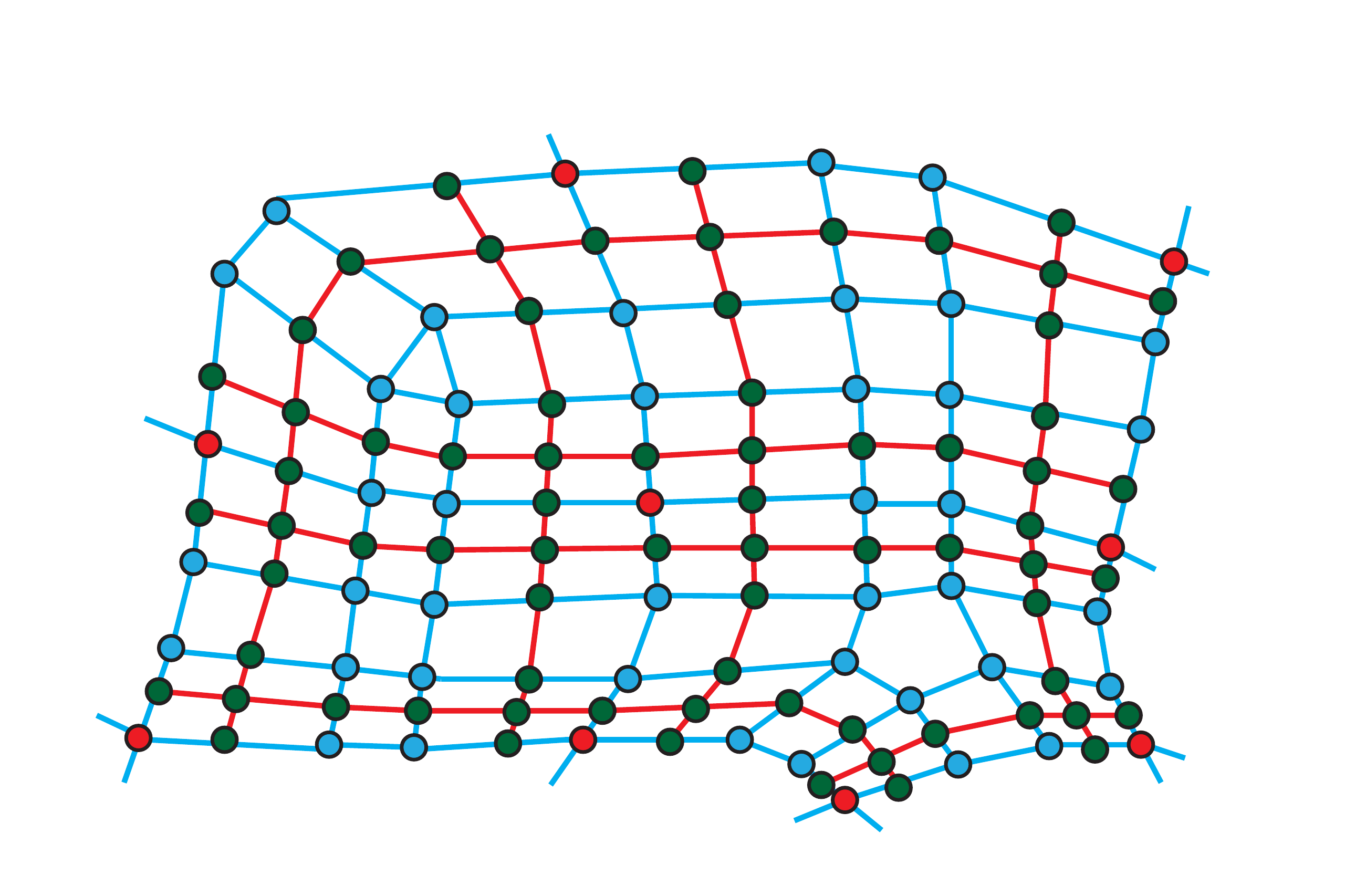}\\
(a) The first level & (b) The subsequent levels
\end{tabular}
\end{center}
\caption{The topological rules of tHNUS. (a) Converting the input quadrilateral mesh (light grey dots and lines) to a corresponding hybrid mesh (blue and red dots), and (b) refinement of the hybrid mesh in (a).}
\label{fig:topo}
\end{figure}

The topological rule for the subsequent levels is about how to split the initial hybrid mesh as in Figure~\ref{fig:topo}(a). Overall, every edge with a nonzero knot interval is split equally into two. As a result, a quadrilateral face is split into four or two subfaces, depending on the number of nonzero-knot-interval edges it has. All the polygonal faces\footnote{We refer to non-quadrilaterals as polygonal faces.} stay unchanged (topologically).

We next introduce the geometric rules of tHNUS, which again are divided into the first level and the subsequent levels. At the first level, the rule to update regular vertices is the same as NURBS refinement, whereas the rule to compute polygon vertices is derived such that the limit surface of tHNUS has the same limit point and tangent plane as that of the non-uniform subdivision via eigen-polyhedron~\cite{xin16}. We take the eigen-polyhedron-based subdivision as the reference because it shows demonstrated shape quality under non-uniform parameterization. However, the computation is rather complicated and there is no explicit formula available. Alternatively, a simple explicit rule was provided in~\cite{xin18}, where each polygon vertex is computed as a convex combination of neighboring vertices. However, this explicit rule does not guarantee shape quality. Note that the geometric rule for the first level plays a crucial role in determining shape quality, but it has nothing to do with the proof of surface $G^1$ continuity or the convergence performance in IGA.

\begin{figure}[htbp]
\begin{center}
\begin{tabular}{cc}
\includegraphics[width=0.45\textwidth]{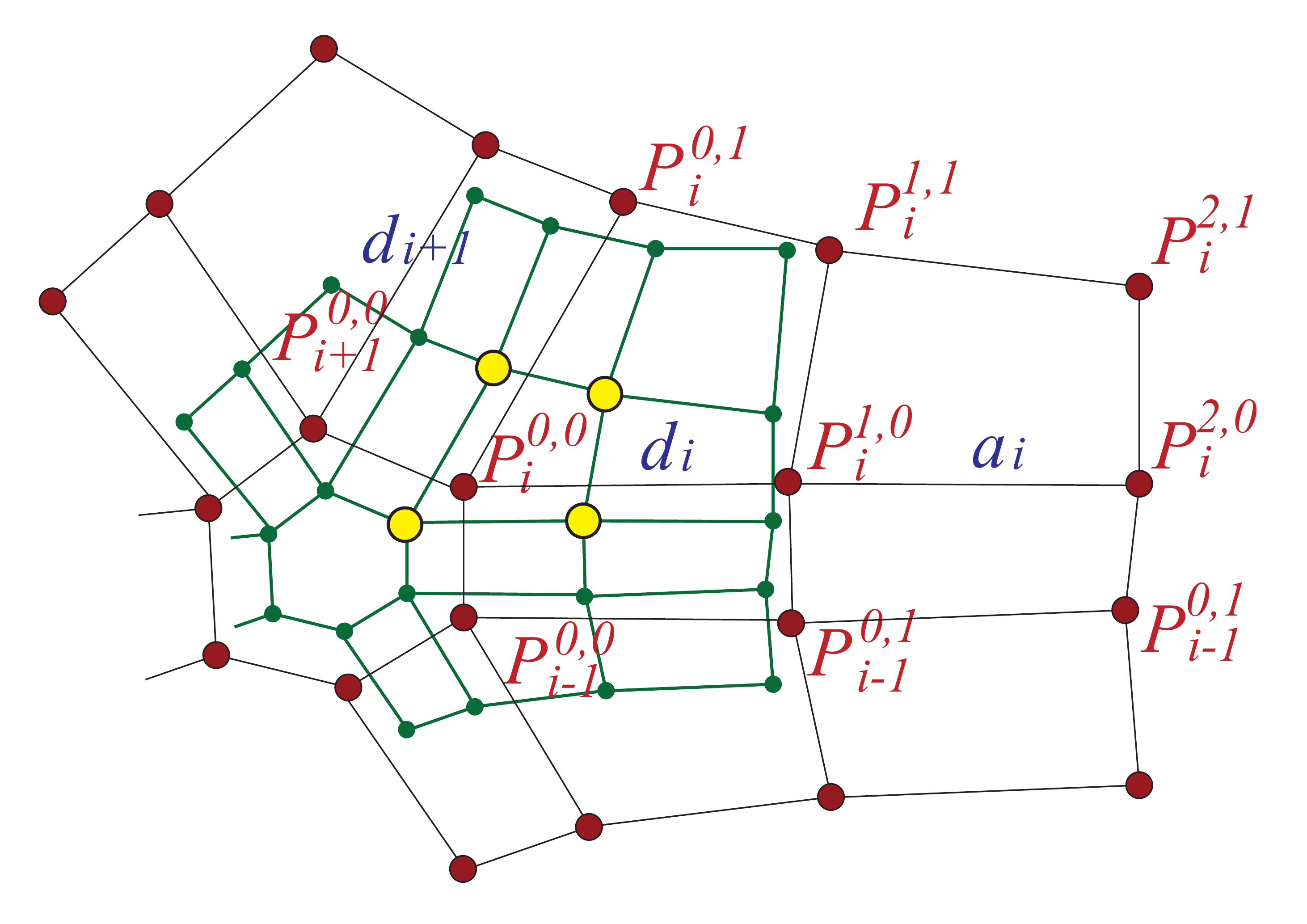}&
\includegraphics[width=0.45\textwidth]{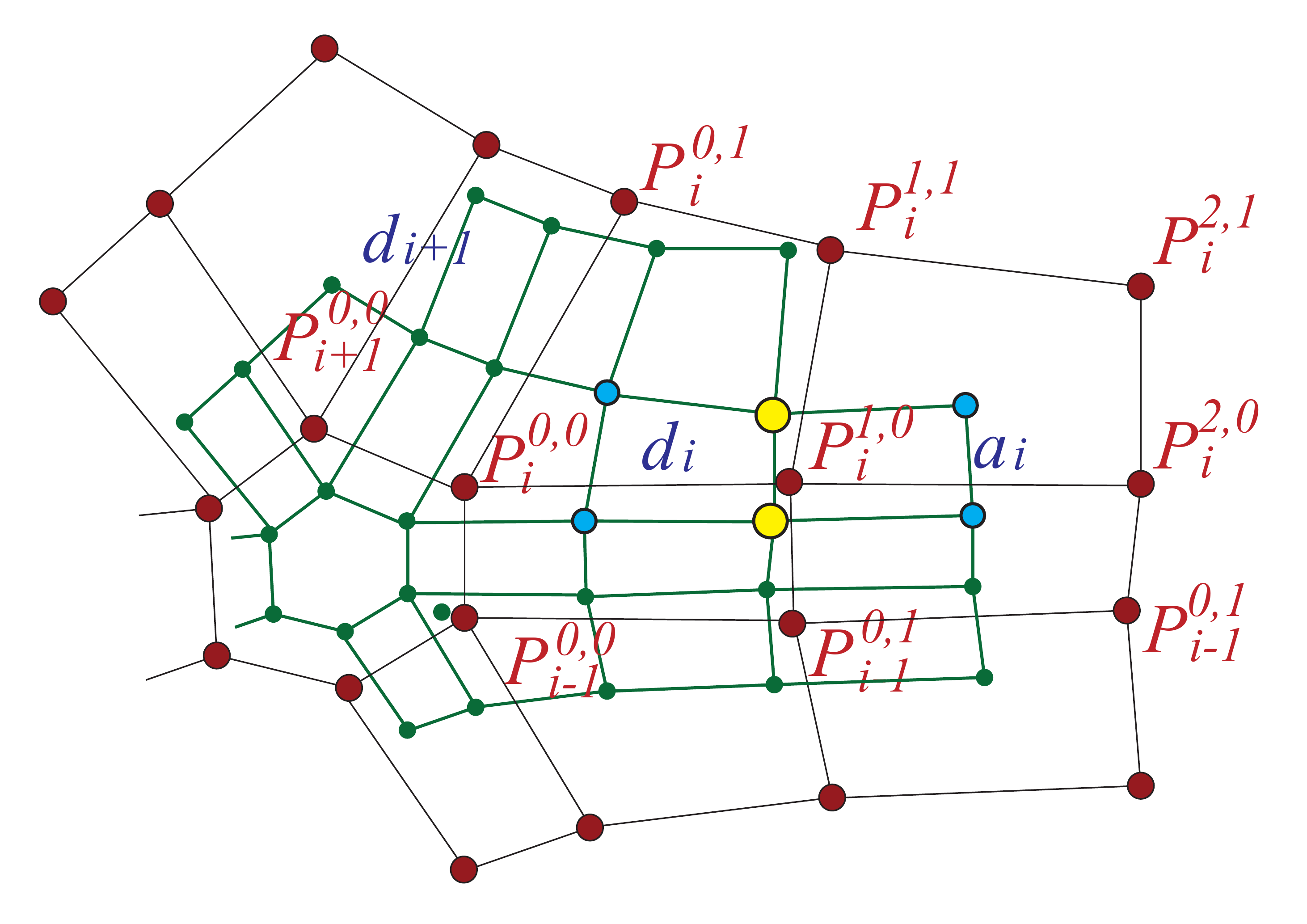}\\
(a) The first two-ring vertices & (b) The third-ring vertices
\end{tabular}
\end{center}
\caption{The geometric rule of tHNUS for the subsequent levels. (a) is the rule for the first two-ring
vertices, and (b) is the rule for the third-ring vertices.}
\label{fig:geom}
\end{figure}

Now we provide the geometric rule of tHNUS for the subsequent levels, which differs from HNUS in that there is an additional tuning parameter $\lambda$ in the formula to update polygon vertices. Referring to Figure~\ref{fig:geom} and given knot intervals $a_i$, $d_i$, the points $\overline{P}_{i}^{0, 0}$, $\overline{P}_{i}^{1, 0}$, $\overline{P}_{i}^{1, 1}$ and $\overline{P}_{i}^{0, 1}$ in the refined mesh are defined as
\begin{small}
\begin{equation}\label{eq:rule_irr}
\left\{
\begin{aligned}
\overline{P}_{i}^{0, 0} &= (1 - \lambda)C + \lambda P_{i}^{0, 0} + 2\lambda\alpha_{i}( -n P_{i}^{0, 0} + \sum_{j = 0}^{n-1}( 1 + 2\cos(\frac{2(j - i)\pi}{n}))P_{j}^{0, 0}), \\
\overline{P}_{i}^{1, 1} &= \frac{d_{i}d_{i+1}P_{i}^{1, 1} + d_{i}(d_{i+1}+2a_{i+1})P_{i}^{1, 0} +
d_{i+1}(d_{i}+2a_{i})P_{i}^{0, 1} + (d_{i}+2a_{i})(d_{i+1}+2a_{i+1})P_{i}^{0, 0}}{4(d_{i}+a_{i})(d_{i+1}+a_{i+1})}, \\
\overline{P}_{i}^{1, 0} &= \frac{d_{i}d_{i+1}P_{i-1}^{0, 1} + d_{i}(d_{i+1}+2d_{i-1})P_{i}^{1, 0} +
d_{i+1}(d_{i}+2a_{i})P_{i-1}^{0, 0} + (d_{i}+2a_{i})(d_{i+1}+2d_{i-1})P_{i}^{0, 0}}{4(d_{i-1}+d_{i+1})(d_{i}+a_{i})}, \\
\overline{P}_{i-1}^{0, 1} &= \frac{d_{i}d_{i-1}P_{i}^{1, 0} + d_{i}(2d_{i+1}+d_{i-1})P_{i-1}^{0, 1} +
d_{i-1}(d_{i}+2a_{i})P_{i}^{0, 0} + (d_{i}+2a_{i})(2d_{i+1}+d_{i-1})P_{i-1}^{0, 0}}{4(d_{i-1}+d_{i+1})(d_{i}+a_{i})},
\end{aligned}
\right.
\end{equation}
\end{small}
where $\alpha_{j} = \frac{1}{n}\frac{d_{j-1}d_{j+2}}{(d_{j-1}+d_{j+1})(d_{j} + d_{j+2})}$, then
\begin{equation}
C = \frac{\sum_{i=0}^{n-1}(d_{i}P_{i+1}^{0, 0} + d_{i+2}P_{i}^{0, 0})(d_{i-1}+d_{i+3})}{\sum_{j=0}^{n-1}(d_{j}+d_{j+2})(d_{j-1}+d_{j+3})} \doteq
\sum_{i=0}^{n-1} \beta_{i} P_{i}^{0, 0}.
\label{eq:rule_c}
\end{equation}
The remaining points are computed by the NURBS mid-knot insertion. For example,
\begin{align}
\overline{P}_{i}^{2, 0} &= \frac{a_{i}\overline{P}_{i}^{1, 0}}{2(d_{i} + a_{i})} + \frac{1}{4}\frac{(d_{i+1} + 2d_{i-1})P_{i}^{1, 0} + d_{i+1}P_{i-1}^{0, 1}}{d_{i+1}+d_{i-1}} + \frac{d_{i}\overline{P}_{i}^{3, 0}}{2(d_{i} + a_{i})}, \nonumber\\
\overline{P}_{i}^{2, 1} &= \frac{a_{i}\overline{P}_{i}^{1, 1}}{2(d_{i} + a_{i})} + \frac{1}{4}\frac{d_{i+1} P_{i}^{1, 1} + (d_{i+1} + 2a_{i+1})P_{i}^{1, 0}}{d_{i+1} + a_{i+1}} + \frac{d_{i}\overline{P}_{i}^{3, 1}}{2(d_{i} + a_{i})}, \nonumber\\
\overline{P}_{i}^{2, 2} &=  \frac{P_{i}^{1, 1}}{4} + \frac{a_{i}a_{i+1}\overline{P}_{i}^{1, 1} + d_{i}a_{i+1}\overline{P}_{i}^{3, 1} + a_{i}d_{i+1}\overline{P}_{i}^{1, 3} + d_{i}d_{i+1}\overline{P}_{i}^{3, 3} }{4(d_{i}+a_{i})(d_{i+1}+a_{i+1})} \nonumber \\
&+ \frac{a_{i+1}}{4(d_{i+1}+a_{i+1})}M_{1} + \frac{d_{i+1}}{4(d_{i+1}+a_{i+1})}M_{3} + \frac{d_{i}}{4(d_{i}+a_{i})}M_{2} + \frac{a_{i}}{4(d_{i}+a_{i})}M_{4},
\label{eq:rule_reg}
\end{align}
where
\begin{align}
M_{1} &= \frac{d_{i+1} P_{i}^{1, 1} + (d_{i+1} + 2a_{i+1})P_{i}^{1, 0}}{2(d_{i+1} + a_{i+1})},
&M_{2} &= \frac{P_{i}^{1, 1} + P_{i}^{2, 1}}{2}, \nonumber\\
M_{4} &= \frac{P_{i}^{1, 1} + (d_{i} + 2a_{i})P_{i}^{0, 1}}{2(d_{i} + a_{i})},
&M_{3} &= \frac{P_{i}^{1, 1} + P_{i}^{1, 2}}{2}.
\label{eq:rule_regm}
\end{align}

\begin{myremark}
All the computations of these points are the same as those in HNUS except for polygon vertices $\overline{P}_{i}^{0, 0}$, where the tuning parameter $\lambda$ is introduced to control the size of shrinkage in the updated polygon: the smaller $\lambda$ is, the more the polygon shrinks. As a result, isoparametric lines become more concentrated around extraordinary vertices. We will see examples in Section \ref{sec:result}. When $\lambda=\frac{1}{2}$, the computation of $\overline{P}_{i}^{0, 0}$ coincides with that in HNUS, and thus tHNUS is equivalent to HNUS in this particular case. However, the generalization via introducing $\lambda$ is not as straightforward as it appears. The key insight is that $\lambda$ turns out to be the subdominant eigenvalues (i.e., the 2nd and 3rd eigenvalues) of the subdivision matrix in tHNUS. As has been reported in \cite{ref:ma19}, convergence behavior in a subdivision scheme is mostly influenced by the subdominant eigenvalues. Therefore, tuning $\lambda$ is equivalent to ``controlling" convergence. We will have more detailed discussion about how $\lambda$ improves convergence with specific examples in Section \ref{sec:result}.
\end{myremark}

Tuned subdivision is a well studied subject aiming to optimized subdivision stencils (i.e., coefficients in the subdivision matrix) to improve certain properties of a subdivision scheme, for example, to minimize curvature variations to achieve a better surface fairness \cite{ref:halstead93, ref:kobbelt96, ref:augsdorfer06}. Recently, it has been explored in the context of IGA to improve accuracy \cite{ref:qzhang18} as well as convergence \cite{ref:ma19}. In particular, the tuned Catmull-Clark subdivision \cite{ref:ma19} is the first work in IGA that is able to use a subdivision scheme to achieve optimal convergence rates (in the $L^2$-norm error by solving the Poisson's equation). However, the optimization framework proposed in \cite{ref:ma19} only works for uniform parameterization and cannot be extended to non-uniform subdivision schemes because the subdivision stencils in a uniform subdivision scheme like Catmull-Clark only depend on the valence of a given extraordinary vertex, and the optimization can be focused on a finite number of stencils of interest. Thus, optimization only needs to be done once and the optimized stencils can be stored for future use. On the other hand, the subdivision stencils in a non-uniform subdivision depend on not only the valence of an extraordinary vertex, but also the surrounding knot intervals, leading to infinite possible cases of stencils. Therefore, it is not feasible to apply optimization to non-uniform subdivision because otherwise it would be very time-consuming and also problem specific.

\begin{myremark}
We introduce $\lambda$ \emph{explicitly} to the formula of $\overline{P}_{i}^{0, 0}$. Note that $\lambda$ is a single parameter for all situations. It is independent of the valence of extraordinary vertices and the choice of knot intervals. However, this is only possible when bounded curvature is not of primary interest. tHNUS generally does not have bounded curvature. Nonetheless, bounded curvature under non-uniform parameterization remains an open problem and may not be available at all.
\end{myremark}

\section{Proof of continuity}
\label{sec:proof}

In order to prove tHNUS surfaces to be $G^1$ continuous, we need to prove that the spectrum of the subdivision matrix satisfies certain constraints and the associated characteristic map is regular and injective. Note that introducing $\lambda$ to HNUS indeed significantly complicates the proof of $G^1$ continuity. Referring to Figure~\ref{fig:subm} for the notations, the subdivision rule can be written into the following equations since the neighbor knot intervals $a_{i}$ equals to $d_{i}$ with enough subdivision levels.

\begin{figure}[htbp]
\begin{center}
\includegraphics[width=0.45\textwidth]{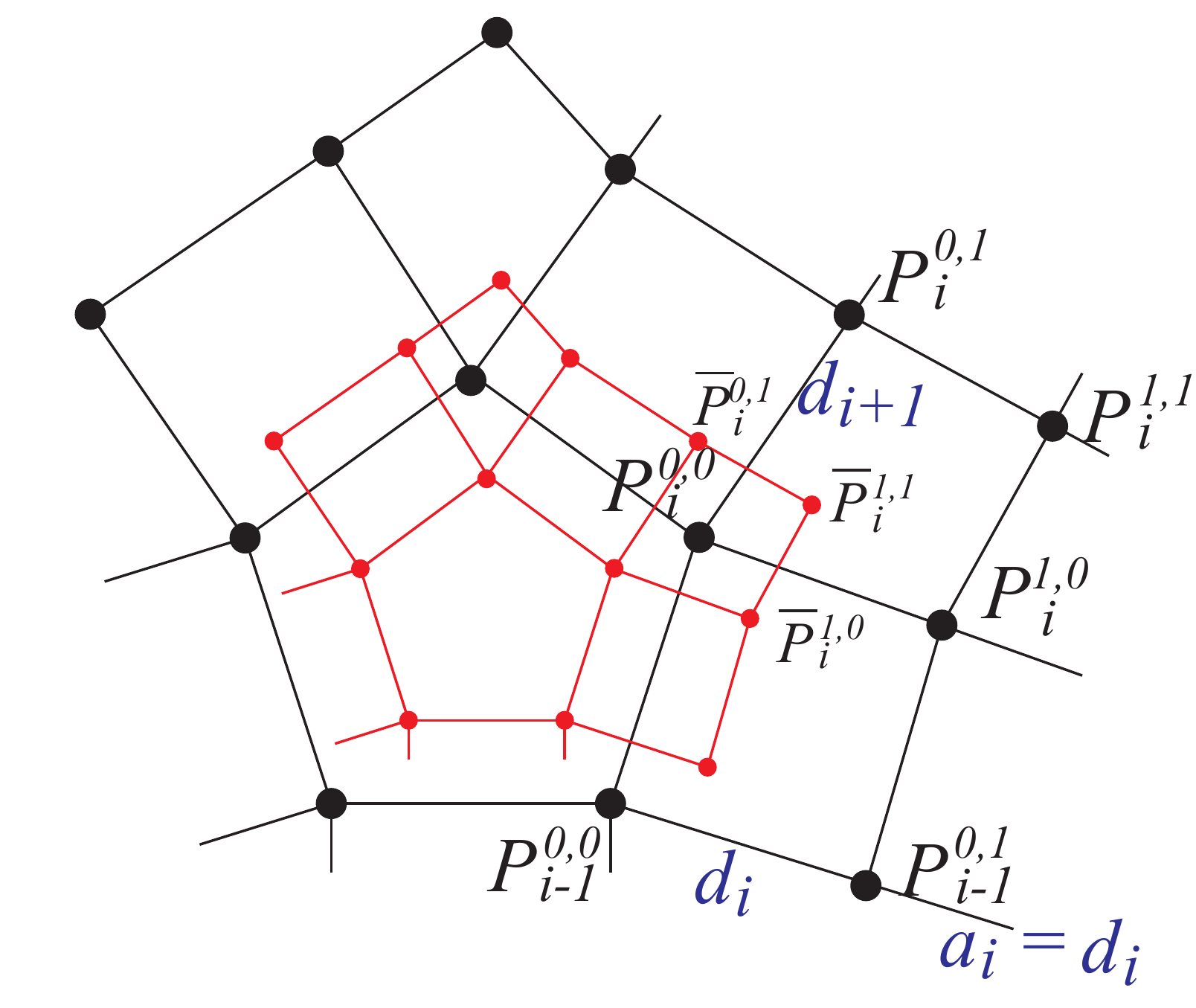}
\end{center}
\caption{The notations to define the subdivision matrix around a polygonal face. \label{fig:subm}}
\end{figure}
\begin{small}
\begin{equation}
\left\{
\begin{aligned}
\overline{P}_{j}^{0, 0} &= (1 - \lambda)C + \lambda P_{j}^{0, 0} + 2\lambda\alpha_{j} \left[ -n P_{j}^{0, 0} + \sum_{i = 0}^{n-1} ( 1 + 2\cos(\frac{2(j-i)\pi}{n}))P_{i}^{0, 0} \right], \\
\overline{P}_{j}^{1, 0} &= \frac{3(2d_{j-1}+d_{j+1})}{8(d_{j-1}+d_{j+1})}P_{j}^{0, 0} + \frac{3d_{j+1}}{8(d_{j-1}+d_{j+1})}P_{j-1}^{0, 0} +
\frac{(2d_{j-1}+ d_{j+1})}{8(d_{j-1}+d_{j+1})}P_{j}^{1, 0} + \frac{d_{j+1}}{8(d_{j-1}+d_{j+1})}P_{j-1}^{0, 1} , \\
\overline{P}_{j-1}^{0, 1} &= \frac{3d_{j-1}}{8(d_{j-1}+d_{j+1})}P_{j}^{0, 0} + \frac{3(d_{j-1}+2d_{j+1})}{8(d_{j-1}+d_{j+1})}P_{j-1}^{0, 0} +
\frac{d_{j-1}}{8(d_{j-1}+d_{j+1})}P_{j}^{1, 0} + \frac{(d_{j-1}+2d_{j+1})}{8(d_{j-1}+d_{j+1})}P_{j-1}^{0, 1} , \\
\overline{P}_{j}^{1, 1} &= \frac{9}{16}P_{j}^{0, 0} + \frac{3}{16}P_{j}^{0, 0} + \frac{3}{16}P_{j}^{0, 0} + \frac{1}{16}P_{j}^{1, 1}.\\
\end{aligned}
\right.
\end{equation}
\end{small}
We arrange them in a matrix form $\overline{M} = S_{n} M$, i.e.,
\begin{equation}
\left(
\begin{array}{c}
  \overline{P}_{0}^{0, 0} \\
  \vdots \\
  \overline{P}_{n-1}^{0, 0} \\
  \overline{P}_{0}^{1, 0} \\
  \vdots \\
  \overline{P}_{n-1}^{0, 1} \\
  \overline{P}_{0}^{1, 1} \\
  \vdots \\
  \overline{P}_{n-1}^{1, 1}
\end{array}
\right)
= \left(
    \begin{array}{ccccccccc}
       &  &  &  &  &  &  &  &  \\
       & Q_{n} &  &  & 0 &  &  & 0 &  \\
       &   &  &  &  &  &  &  &  \\
       &   &  & E_{0} & \ldots & 0 &  &  &  \\
       & \divideontimes &  & \vdots & \ddots & 0 &  & 0 &  \\
       &   &  & 0 & 0 & E_{n-1} &  &  &  \\
       &   &  &  &  &  & \frac{1}{16} & 0 & 0 \\
       & \divideontimes &  &  & \divideontimes &  & \vdots & \ddots &  \\
       &   &  &  &  &  & 0 & 0 & \frac{1}{16} \\
    \end{array}
  \right)
\left(
  \begin{array}{c}
  P_{0}^{0, 0} \\
  \vdots \\
  P_{n-1}^{0, 0} \\
  P_{0}^{01, 0} \\
  \vdots \\
  P_{n-1}^{0, 1} \\
  P_{0}^{1, 1} \\
  \vdots \\
  P_{n-1}^{1, 1}
\end{array}
\right) .
\end{equation}
Denote $Q_{n} = (Q_{i, j})$, where $i, j \in \{0,1,\dots, n-1\}$, and then we have
$$Q_{i, j} = \left\{
               \begin{array}{ll}
                 (1-\lambda)\beta_{j} + 2\left( 1 + 2\cos\left(\frac{2(j-i)\pi}{n}\right)\right)\lambda\alpha_{i}, & \hbox{$j \neq i$} \\
                 \lambda + (1-\lambda)\beta_{i}- 2(n-3)\lambda\alpha_{i}, & \hbox{$j = i$}
               \end{array}
             \right.
$$
and
$$E_{j} = \left(
             \begin{array}{cc}
               \frac{2d_{j-1}+ d_{j+1}}{8(d_{j-1}+d_{j+1})} & \frac{d_{j+1}}{8(d_{j-1}+d_{j+1})} \\
               \frac{d_{j-1}}{8(d_{j-1}+d_{j+1})} & \frac{d_{j-1}+2d_{j+1}}{8(d_{j-1}+d_{j+1})} \\
             \end{array}
           \right).
$$

\begin{lemma}
\label{lemma:eigen}
Given an extraordinary vertex of any valence and an arbitrary choice of positive knot intervals, the eigenvalues of $Q_{n}$ satisfy
\begin{equation}
\lambda_{1} = 1 > \lambda_{2} = \lambda_{3} = \lambda > |\lambda_{k}|, k = 4, 5, \dots, n.
\end{equation}
\end{lemma}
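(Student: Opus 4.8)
The plan is to read off the spectrum from the algebraic structure of $Q_{n}$ rather than compute a characteristic polynomial directly. First I would rewrite the matrix compactly as
\[
Q_{n} \;=\; \lambda I \;+\; (1-\lambda)\,\mathbf{1}\beta^{\top} \;+\; 2\lambda\, D_{\alpha}\,(A - nI),
\]
where $\mathbf{1}=(1,\dots,1)^{\top}$, $\beta=(\beta_{0},\dots,\beta_{n-1})^{\top}$, $D_{\alpha}=\mathrm{diag}(\alpha_{0},\dots,\alpha_{n-1})$ with every $\alpha_{i}>0$, and $A$ is the real symmetric matrix with entries $A_{ij}=1+2\cos\!\big(\tfrac{2(i-j)\pi}{n}\big)$. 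The two facts I need about $A$ are: (i) $A=\mathbf{1}\mathbf{1}^{\top}+w\bar w^{\top}+\bar w w^{\top}$ with $w=(1,\omega,\dots,\omega^{n-1})^{\top}$ and $\omega=e^{2\pi\mathrm{i}/n}$, so $A$ acts as $nI$ on the three-dimensional space $P:=\mathrm{span}\{\mathbf{1},w,\bar w\}$ and as $0$ on $P^{\perp}$; in particular $0\preceq A\preceq nI$ and $\ker(A-nI)=P$; and (ii) $\sum_{i}\beta_{i}=1$, which also shows that every row of $Q_{n}$ sums to $1$, hence $Q_{n}\mathbf{1}=\mathbf{1}$ --- the eigenvalue $\lambda_{1}=1$.

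The key observation is that $P$ is $Q_{n}$-invariant. Since $A-nI$ annihilates $P$, on $P$ we have $Q_{n}|_{P}=\lambda I_{P}+(1-\lambda)\mathbf{1}\beta^{\top}|_{P}$, and because $\beta^{\top}\mathbf{1}=1\neq 0$ this restriction is diagonalizable with spectrum $\{1,\lambda,\lambda\}$: eigenvalue $1$ spanned by $\mathbf{1}$, and eigenvalue $\lambda$ on the two-dimensional subspace $P\cap\ker\beta^{\top}$ (explicit eigenvectors $w-(\beta^{\top}w)\mathbf{1}$ and its conjugate). Passing to the quotient $\mathbb{C}^{n}/P$, the rank-one term $(1-\lambda)\mathbf{1}\beta^{\top}$ has range inside $P$ and so induces the zero map, while $D_{\alpha}(A-nI)$ maps $P$ into $\{0\}\subseteq P$ and hence descends to some $T$; thus the operator induced by $Q_{n}$ is $\lambda(I+2T)$. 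Consequently the remaining $n-3$ eigenvalues of $Q_{n}$ are exactly $\lambda(1+2\theta)$, where $\theta$ runs over the eigenvalues of $T$, equivalently over the nonzero eigenvalues of $D_{\alpha}(A-nI)$ --- the zero eigenvalue of $D_{\alpha}(A-nI)$ having multiplicity exactly $3$ because its kernel is $\ker(A-nI)=P$.

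It then remains to show every such $\theta$ lies in the open interval $(-1,0)$; this forces $|\lambda(1+2\theta)|<\lambda$, and since then $\lambda(1+2\theta)\notin\{1,\lambda\}$ it also pins the algebraic multiplicities of $1$ and $\lambda$ to exactly $1$ and $2$. Because every $\alpha_{i}>0$, the matrix $D_{\alpha}(A-nI)$ is similar, via conjugation by $D_{\alpha}^{1/2}$, to the symmetric matrix $D_{\alpha}^{1/2}(A-nI)D_{\alpha}^{1/2}$, so its eigenvalues are real; they are $\le 0$ because $A-nI\preceq 0$, hence $\theta<0$. The lower bound $\theta>-1$ for all eigenvalues is equivalent to $D_{\alpha}^{1/2}(A-nI)D_{\alpha}^{1/2}\succ -I$, i.e. (conjugating back) to the positive-definiteness $A-nI+D_{\alpha}^{-1}\succ 0$. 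This is where the knot intervals enter: from the definition $n\alpha_{i}=\dfrac{d_{i-1}}{d_{i-1}+d_{i+1}}\cdot\dfrac{d_{i+2}}{d_{i}+d_{i+2}}\in(0,1)$, so $\max_{i}\alpha_{i}<1/n$ and therefore $D_{\alpha}^{-1}\succ nI$; combined with $A\succeq 0$ this gives $A-nI+D_{\alpha}^{-1}=A+(D_{\alpha}^{-1}-nI)\succ 0$, which closes the argument and exhibits $\lambda$ as semisimple of multiplicity $2$.

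The step I expect to cost the most care is the multiplicity bookkeeping --- confirming that $\ker(Q_{n}-\lambda I)$ is exactly two-dimensional (no stray eigenvector directions from outside $P$, no generalized eigenvectors) and that the zero eigenvalue of $D_{\alpha}(A-nI)$ has multiplicity exactly $3$ rather than more; both reduce to the remark that $D_{\alpha}^{-1}\mathbf{1}$ is a strictly positive vector and hence cannot lie in $P^{\perp}=\mathrm{range}(A-nI)$. Deriving $\sum_{i}\beta_{i}=1$ and the product formula for $n\alpha_{i}$ from \eqref{eq:rule_irr}--\eqref{eq:rule_c} is routine but must be done carefully, since the whole positivity argument rests on the sharp bound $n\alpha_{i}<1$.
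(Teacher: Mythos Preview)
Your argument is correct and, at its core, follows the same strategy as the paper: isolate the three ``low-frequency'' Fourier directions $\mathbf{1},w,\bar w$ to read off the eigenvalues $1,\lambda,\lambda$, and then bound the remaining $(n-3)$ eigenvalues via a positive-definiteness estimate. The paper carries this out by an explicit discrete Fourier transform that block-triangularizes $Q_n$, obtaining the residual block $B_{n-3}=\lambda I-2\lambda G_n$ and then invoking \cite{xin18} for the positive definiteness of $G_n$ and $I-G_n$.

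Your packaging is somewhat different and arguably cleaner: instead of writing down the DFT change of basis, you work with the invariant subspace $P=\mathrm{span}\{\mathbf{1},w,\bar w\}$ and the quotient $\mathbb{C}^n/P$, and you replace the citation to \cite{xin18} by a self-contained bound. The similarity $D_\alpha(A-nI)\sim D_\alpha^{1/2}(A-nI)D_\alpha^{1/2}$ makes reality and diagonalizability of the spectrum immediate, and the single inequality $n\alpha_i<1$ (which is exactly the product bound $\tfrac{d_{i-1}}{d_{i-1}+d_{i+1}}\cdot\tfrac{d_{i+2}}{d_i+d_{i+2}}<1$) yields $D_\alpha^{-1}\succ nI$ and hence $A+(D_\alpha^{-1}-nI)\succ 0$, giving $\theta>-1$ directly. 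This is precisely the content of the paper's ``$G_n$ and $I-G_n$ positive definite'' step, but proved on the spot rather than deferred. What you gain is a self-contained argument that makes transparent where the knot intervals enter; what the paper's explicit DFT buys is the concrete matrix $B_{n-3}$, which is reused later in computing the characteristic map. Your caution about the multiplicity bookkeeping is well placed but already handled: similarity to a symmetric matrix forces algebraic and geometric multiplicities to agree, so the zero eigenvalue of $D_\alpha(A-nI)$ has multiplicity exactly~$3$.
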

\begin{proof}We use the discrete Fourier transform to compute the eigenvalues of $Q_{n}$. Let $p_{k}$ and $\overline{p}_{k}$ ($k = 0, \dots, n-1$) be the Fourier vectors corresponding to $P_{j}$ and $\overline{P}_{j}$, respectively, i.e.,
\begin{eqnarray}
&p_{k} = \frac{1}{n}\sum_{j = 0}^{n-1}P_{j}^{0, 0}\overline{\omega}^{jk}, &\overline{p}_{k} = \frac{1}{n}\sum_{j = 0}^{n-1}\overline{P}_{j}^{0, 0}\overline{\omega}^{jk}, \\
&P_{k}^{0, 0} = \sum_{j = 0}^{n-1}p_{j}\omega^{jk}, &\overline{P}_{k}^{0, 0} = \sum_{j = 0}^{n-1}\overline{p}_{j}\omega^{jk},
\end{eqnarray}
where $\omega = e^{\frac{2\pi}{n}}$ and $\overline{\omega} = e^{-\frac{2\pi}{n}}$. Now the subdivision rule can be formulated in terms of the Fourier vectors,
\begin{equation}
\sum_{k=0}^{n-1}\overline{p}_{k}\omega^{jk} = \sum_{k=0}^{n-1}\left(\sum_{j=0}^{n-1}(1-\lambda)\beta_{j} \omega^{jk}\right)p_{k}
+ p_{0} + \lambda\omega^{j}p_{1} + \lambda\omega^{j(n-1)}p_{n-1} + 2\lambda\left(\frac{1}{2} - n\alpha_{j}\right)\sum_{k=2}^{n-2}p_{k}\omega^{jk}.
\end{equation}
Using the inverse discrete Fourier transform, we obtain
\begin{equation}
\left(\begin{array}{c}
  \overline{p}_{0}  \\
  \overline{p}_{1}  \\
  \vdots \\
  \overline{p}_{n-1}
\end{array}
\right) = \left(
            \begin{array}{cccc}
              1 & (1-\lambda)\beta_{1} & \cdots & (1-\lambda)\beta_{n-1}\\
              0 & \lambda & \divideontimes & 0 \\
              0 & 0 & B_{n-3} & 0 \\
              0 & 0 & \divideontimes & \lambda \\
            \end{array}
          \right)
\left(\begin{array}{c}
  p_{0}  \\
  p_{1}  \\
  \vdots \\
  p_{n-1}
\end{array}
\right) ,
\end{equation}
where
\begin{equation}
B_{n - 3} = \lambda I - 2\lambda\left(
                             \begin{array}{cccc}
                               \sum_{j=0}^{n-1}\alpha_{j} & \sum_{j=0}^{n-1}\alpha_{j}\omega^{j} & \ldots & \sum_{j=0}^{n-1}\alpha_{j}\omega^{(n-4)j}\\
                               \sum_{j=0}^{n-1}\alpha_{j}\omega^{(n-1)j} & \sum_{j=0}^{n-1}\alpha_{j} & \cdots & \sum_{j=0}^{n-1}\alpha_{j}\omega^{(n-5)j} \\
                               \vdots & \vdots & \ddots & \vdots \\
                               \sum_{j=0}^{n-1}\alpha_{j}\omega^{4j} & \sum_{j=0}^{n-1}\alpha_{j}\omega^{5j} & \cdots & \sum_{j=0}^{n-1}\alpha_{j} \\
                             \end{array}
                           \right) =: \lambda I - 2\lambda G_n.
\end{equation}
Similar to~\cite{xin18}, both $G_{n}$ and $I - G_{n}$ are positive definite. Denote $\lambda_{B, i}$ the eigenvalues of $B_{n-3}$. As $G_{n}$ is positive definite, $\frac{1}{2}I - \frac{1}{2\lambda} B_{n-3}$ ($=G_n$) is also a positive definite matrix, which means that $\lambda_{B, i} < \lambda$. On the other hand, $I - G_{n}$ is a positive definite matrix as well because $\mu_{k} < 1$, and equivalently, $I - (\frac{1}{2}I - \frac{1}{2\lambda}B_{n-3})$ is positive definite, which means that $\lambda_{B, i} > -\lambda$. Therefore, we complete the proof.
\end{proof}

\begin{lemma}
\label{lemma:eigen2}
Given an extraordinary vertex of any valence and an arbitrary choice of positive knot intervals, if $\lambda > \frac{1}{4}$, then the eigenvalues of $S_{n}$ satisfy
\begin{equation}
\lambda_{1} = 1 > \lambda_{2} = \lambda_{3} = \lambda > |\lambda_{k}|, \mbox{where }k = 4, 5, \dots, 4n.
\end{equation}
\end{lemma}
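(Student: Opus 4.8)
The plan is to exploit the block lower-triangular structure of $S_n$. In the ordering used in the displayed equation $\overline M = S_n M$ --- first the $n$ polygon vertices $P_i^{0,0}$, then the $2n$ edge vertices $P_i^{1,0},P_i^{0,1}$, then the $n$ face vertices $P_i^{1,1}$ --- the matrix $S_n$ is block lower-triangular, with diagonal blocks $Q_n$ (of size $n$), the $2\times 2$ blocks $E_0,\dots,E_{n-1}$ arranged block-diagonally, and $\tfrac1{16}I_n$; the blocks marked $\divideontimes$ lie strictly below the block diagonal and hence cannot affect the spectrum. So the first step is to record that the spectrum of $S_n$ is the union, counted with multiplicity, of the spectrum of $Q_n$, the spectra of the $E_j$, and the eigenvalue $\tfrac1{16}$ with multiplicity $n$; this requires only matching the entries of $S_n$ against the four displayed subdivision equations.

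The second step is immediate from Lemma~\ref{lemma:eigen}: $Q_n$ contributes the eigenvalue $1$ once, $\lambda$ with multiplicity two, and $n-3$ further eigenvalues of modulus strictly less than $\lambda$. The third step, which is the only genuine computation, is to diagonalize a generic $E_j$. I would compute $\operatorname{tr}E_j = \tfrac38$ and $\det E_j = \tfrac1{32}$ and observe that \emph{both are independent of the knot intervals}; hence the characteristic polynomial of $E_j$ is $t^2-\tfrac38 t+\tfrac1{32} = (t-\tfrac14)(t-\tfrac18)$, so every $E_j$ has eigenvalues $\tfrac14$ and $\tfrac18$. This is the key point: the edge eigenvalues are pinned at $\tfrac14$ and $\tfrac18$ for every valence and every admissible choice of knot intervals, which is exactly why $\lambda=\tfrac14$ is the critical threshold in the statement.

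The final step is bookkeeping. The eigenvalues of $S_n$ are $1$ (once), $\lambda$ (twice), the $n-3$ subdominated eigenvalues of $Q_n$, $\tfrac14$ with multiplicity $n$, $\tfrac18$ with multiplicity $n$, and $\tfrac1{16}$ with multiplicity $n$, for a total of $1+2+(n-3)+n+n+n=4n$, as it should be. Under the hypothesis $\lambda>\tfrac14$ one has $\lambda>\tfrac14>\tfrac18>\tfrac1{16}$, and combining this with Lemma~\ref{lemma:eigen} shows that every eigenvalue other than the leading $1$ and the two copies of $\lambda$ has modulus strictly below $\lambda$, which yields $\lambda_1=1>\lambda_2=\lambda_3=\lambda>|\lambda_k|$ for $k=4,\dots,4n$.

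I do not expect a serious obstacle here: the only things requiring care are reading off $S_n$ as genuinely block-triangular (so that the $\divideontimes$ blocks are irrelevant to the eigenvalues) and the elementary $2\times 2$ computation for $E_j$; everything else is Lemma~\ref{lemma:eigen} together with counting multiplicities.
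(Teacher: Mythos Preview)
Your proof is correct and follows essentially the same approach as the paper's: exploit the block lower-triangular structure of $S_n$ so that its spectrum is the union of those of $Q_n$, the $E_j$, and $\tfrac{1}{16}I_n$, invoke Lemma~\ref{lemma:eigen} for $Q_n$, and check that each $E_j$ has eigenvalues $\tfrac14$ and $\tfrac18$. Your trace/determinant computation for $E_j$ is a bit more explicit than the paper's ``straightforward to verify,'' but the argument is otherwise identical.
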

\begin{proof}
The eigenvalues of $S_n$ consist of those of $Q_{n}$, $E_{i}$ and $\frac{1}{16}I_{n}$, where $I_n$ is an $n\times n$ identity matrix. As proved in Lemma~\ref{lemma:eigen}, the first three eigenvalues of $Q_{n}$ are $1, \lambda, \lambda$, and the remaining ones are less than $\lambda$. $\frac{1}{16}I_{n}$ has $n$ equal eigenvalues $\frac{1}{16}$ ($< \lambda$). It is also straightforward to verify that the eigenvalues of the $2\times2$ matrix $E_{i}$ are $\frac{1}{4}$ ($< \lambda$) and $\frac{1}{8}$ ($< \lambda$). Therefore, we conclude that the eigenvalues of $S_{n}$ are
\begin{equation}
\lambda_{1} = 1 > \lambda_{2} = \lambda_{3} = \lambda > |\lambda_{k}|, \mbox{where }k = 4, 5, \dots, 4n.
\end{equation}
\end{proof}

The next step is to compute the characteristic map and prove that it is regular and injective. We first prove the following lemma.
\begin{lemma}
\label{lemma:char0}
Let $P_{i} = \left(\cos(\frac{2i\pi}{n}), \sin(\frac{2i\pi}{n})\right) \in \mathbb{R}^2$ ($i = 0, \dots, n-1$), $C_{P} = \sum_{i=0}^{n-1}\beta_{i}P_{i}$,
and ${P}$ be an $n\times2$ vector containing all $P_{i}$, i.e., $P = [P_{0}, P_{1}, \dots, P_{n-1}]^{T}$. Then we have
\begin{equation}
S_{n}({P} - C_{P}) = \lambda({P} - C_{P}).
\end{equation}
\end{lemma}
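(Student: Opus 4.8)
The plan is to prove the eigen-relation by direct substitution into the polygon-vertex stencil, exploiting two structural facts: that $C_P$ is an affine combination of the $P_i$ (so the ``center'' term re-centers to zero), and that the regular $n$-gon vertices interact nicely with the discrete Fourier sums appearing in the $\overline{P}_i^{0,0}$ rule. Concretely, I would identify $\mathbb{R}^2$ with $\mathbb{C}$ and write $P_i=\omega^i$ with $\omega=e^{2\pi\sqrt{-1}/n}$; since the vertex is extraordinary we have $n\ge 3$, hence $\omega\ne 1$ and $\omega^2\ne 1$, which is all the root-of-unity arithmetic we will need. The two coordinate columns of $P-C_P$ then serve as a basis for the sought $\lambda$-eigenspace.

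First I would substitute $P_j^{0,0}:=P_j-C_P$ into the top ($Q_n$) block of the subdivision rule, $\overline{P}_j^{0,0}=(1-\lambda)C+\lambda P_j^{0,0}+2\lambda\alpha_j\bigl[-nP_j^{0,0}+\sum_i(1+2\cos(\tfrac{2(j-i)\pi}{n}))P_i^{0,0}\bigr]$. The first observation is that the center evaluated on these inputs is $C=\sum_i\beta_i(P_i-C_P)=C_P-C_P\sum_i\beta_i=0$, because $\sum_i\beta_i=1$ (a short reindexing of \eqref{eq:rule_c}, i.e.\ $C$ is an affine average) and $\sum_i\beta_iP_i=C_P$ by definition; so the $(1-\lambda)C$ term drops out.

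Next I would evaluate the Fourier sum $\sum_i(1+2\cos(\tfrac{2(j-i)\pi}{n}))(P_i-C_P)$. Using $\sum_i\cos(\tfrac{2(j-i)\pi}{n})=\operatorname{Re}\bigl(\omega^j\sum_i\omega^{-i}\bigr)=0$, the $C_P$-part contributes $-nC_P$; for the $\omega^i$-part, $\sum_i\omega^i=0$ and, writing $2\cos(\tfrac{2(j-i)\pi}{n})=\omega^{j-i}+\omega^{i-j}$, one gets $\sum_i(\omega^{j-i}+\omega^{i-j})\omega^i=n\omega^j+\omega^{-j}\sum_i\omega^{2i}=n\omega^j$ since $\omega^2\ne1$. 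Hence the bracketed sum equals $n\omega^j-nC_P=n(P_j-C_P)$, which cancels the $-nP_j^{0,0}$ term exactly, leaving $\overline{P}_j^{0,0}=\lambda(P_j-C_P)$ for all $j$, i.e.\ $Q_n(P-C_P)=\lambda(P-C_P)$. Note that the $\alpha_j$ — the only place where the knot intervals $d_i$ enter — disappear entirely, which is precisely why the relation holds for an arbitrary choice of positive knot intervals and arbitrary valence.

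I do not anticipate a genuine obstacle: the work is bookkeeping the real/complex identification and the two elementary root-of-unity sums. If the statement is intended for the full $4n\times4n$ matrix $S_n$ rather than only its $Q_n$ block, the remaining components of the eigen-configuration are forced by the block-triangular shape of $S_n$: the lower rows reduce to equations of the form $(\lambda I-E_i)v=(\text{coupling})\cdot(P-C_P)$ and $(\lambda-\tfrac1{16})v=(\text{coupling})\cdot(P-C_P)$, uniquely solvable because $\lambda>\tfrac14>\tfrac18,\tfrac1{16}$ by Lemma~\ref{lemma:eigen2}; but all of the substance is in the $Q_n$ cancellation above.
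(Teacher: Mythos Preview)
Your proof is correct and takes essentially the same route as the paper: direct substitution into the polygon-vertex rule followed by root-of-unity cancellation to annihilate the $\alpha_j$-bracket, with only cosmetic differences (you use complex exponentials and pre-center by $C_P$, whereas the paper uses real product-to-sum identities on the unshifted $P_j$ and subtracts $C_P$ afterward). Your closing remark about propagating the eigen-relation through the lower block-triangular rows of the full $4n\times4n$ matrix is a welcome clarification that the paper leaves implicit, but for the lemma as stated (with $P$ an $n\times2$ array) only the $Q_n$ block is actually in play.
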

\begin{proof}
Denote $\overline{{P}} = S_{n}{P}$, and we can obtain
\begin{small}
\begin{align*}
\overline{P}_{j} - C_{P}=& \lambda( P_{j} - C_{P}) + 2\lambda\alpha_{j}\left[ -n \left(\cos(\frac{2j\pi}{n}), \sin(\frac{2j\pi}{n})\right) + \sum_{i = 0}^{n-1} \left( 1 + 2\cos(\frac{2(j-i)\pi}{n})\right)\left(\cos(\frac{2i\pi}{n}), \sin(\frac{2i\pi}{n})\right)\right] \nonumber\\
=& \lambda( P_{j} - C_{P}) + 2\lambda\alpha_{j}\left[ -n \left(\cos(\frac{2j\pi}{n}), \sin(\frac{2j\pi}{n})\right) + \sum_{i = 0}^{n-1}2\cos\left(\frac{2(j-i)\pi}{n}\right)\left(\cos(\frac{2i\pi}{n}), \sin(\frac{2i\pi}{n})\right)\right] \nonumber\\
=& \lambda( P_{j} - C_{P}) + 2\lambda\alpha_{j}[ -n (\cos(\frac{2j\pi}{n}), \sin(\frac{2j\pi}{n})) + \nonumber\\
&\sum_{i = 0}^{n-1}(\cos(\frac{2j\pi}{n}) +
\cos(\frac{2(j-2i)\pi}{n}), \sin(\frac{2j\pi}{n}) - \sin(\frac{2(j-2i)\pi}{n}))] \nonumber\\
=& \lambda( P_{j} - C_{P}) .
\end{align*}
\end{small}
Since the above equation holds for any $0 \leq j \leq n-1$, we conclude
\begin{equation}
S_{n}({P} - C_{P}) = \lambda({P} - C_{P}) .
\end{equation}
\end{proof}
\begin{figure}[htbp]
\begin{center}
\begin{tabular}{cc}
\includegraphics[width=0.45\textwidth]{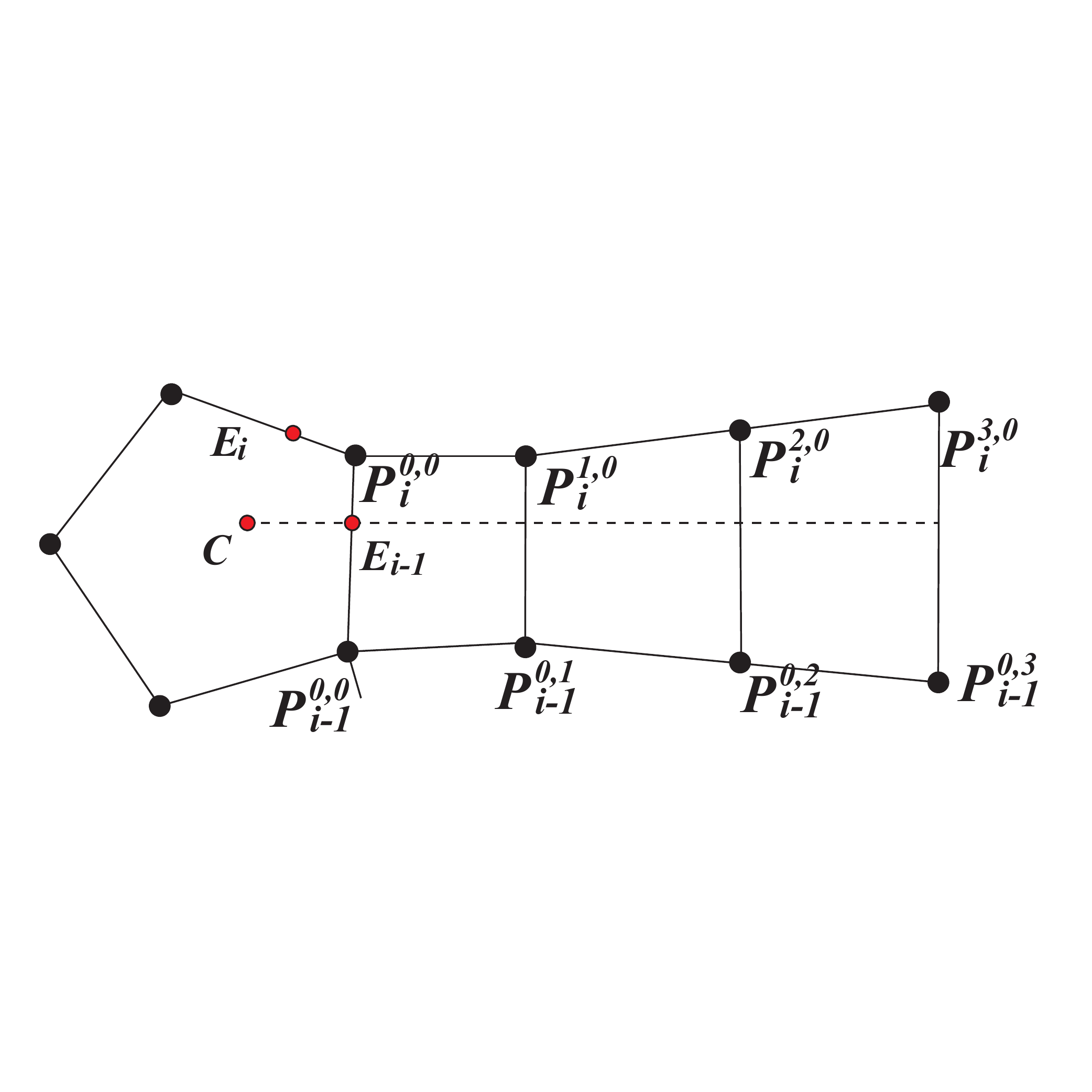}&
\includegraphics[width=0.35\textwidth]{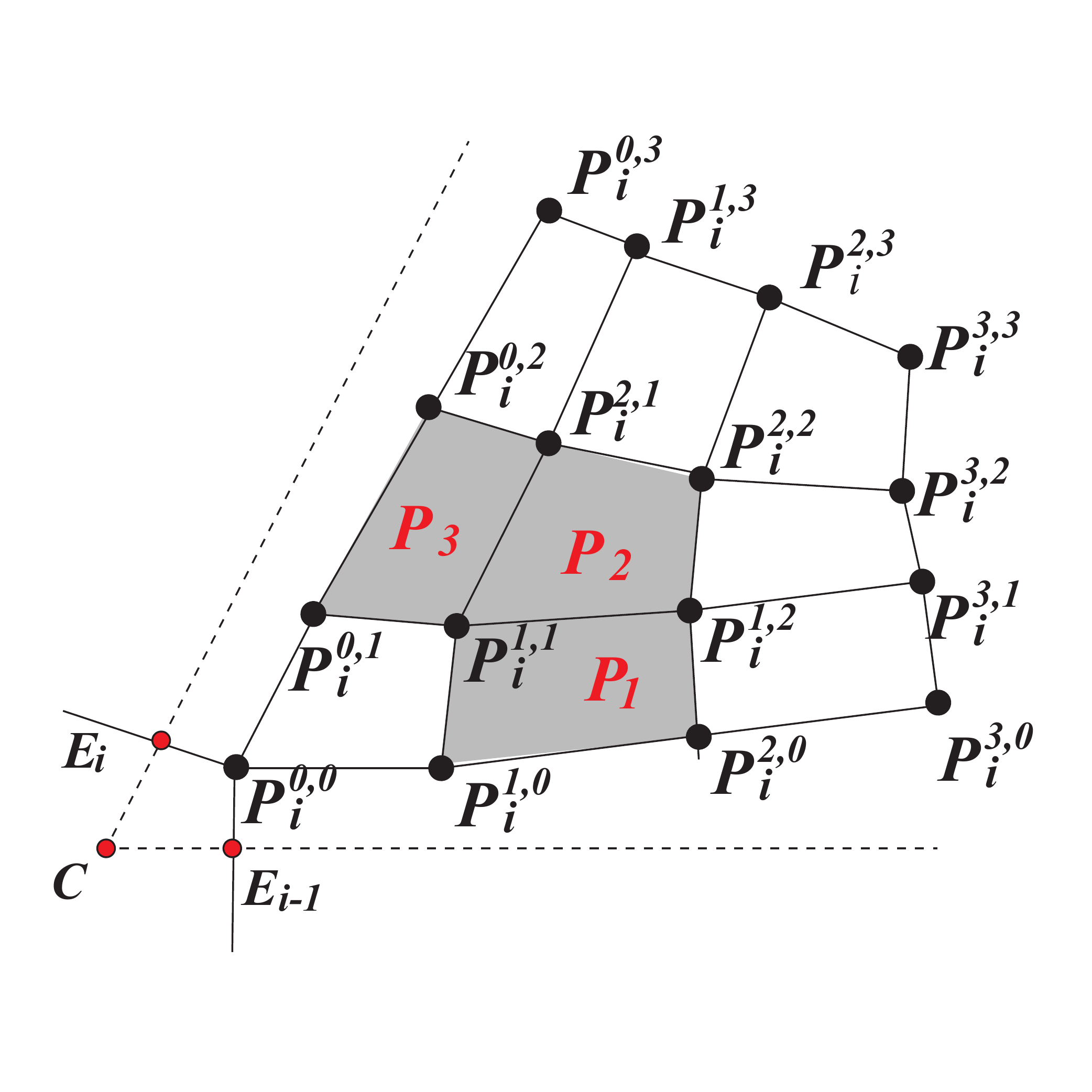}\\
(a) Control points $P_{i}^{j, k}$, $j = 0$ or $k = 0$ & (b) The other control points
\end{tabular}
\end{center}
\caption{The control points of the characteristic map of tHNUS. (a) shows the control points $P_{i}^{0j}$ and $P_{i}^{j0}$
while (b) shows the rest of the control points of the characteristic map.\label{fig:character}}
\end{figure}
\begin{lemma}
\label{lemma:char}
The characteristic map of tHNUS is regular and injective for any valence extraordinary vertices and any positive knot intervals if $\lambda\in (\frac{1}{4}, 1)$.
\end{lemma}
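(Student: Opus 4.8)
The plan is to realize $\Psi$ as the limit surface of tHNUS applied to the subdominant eigen-configuration of $S_{n}$ and then verify the two geometric hypotheses --- non-vanishing of the Jacobian and injectivity --- by the classical reduction to a single spline ring, following the argument of~\cite{xin18} for $\lambda=\tfrac12$ but carrying $\lambda$ through every estimate. \emph{Step 1 (the characteristic map is well defined).} By Lemma~\ref{lemma:eigen2}, for $\lambda\in(\tfrac14,1)$ the value $\lambda$ is the strictly subdominant eigenvalue of $S_{n}$ with algebraic multiplicity $2$. From the block lower-triangular structure of $S_{n}$ (diagonal blocks $Q_{n}$, $E_{0},\dots,E_{n-1}$, $\tfrac1{16}I_{n}$) and the fact that the eigenvalues $\tfrac14,\tfrac18,\tfrac1{16}$ of the lower blocks are all different from $\lambda$ --- here $\lambda>\tfrac14$ is used once more --- every eigenvector for $\lambda$ is uniquely determined by its $Q_{n}$-block, which by Lemma~\ref{lemma:char0} is spanned by the two columns of $P-C_{P}$ with $P_{i}=(\cos\tfrac{2i\pi}{n},\sin\tfrac{2i\pi}{n})$. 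Those two columns are linearly independent for $n\ge3$, so the geometric multiplicity of $\lambda$ is also $2$, $S_{n}$ is non-defective there, and the planar map $\Psi$ obtained by subdividing this eigen-configuration is a bona fide characteristic map, meeting the spectral hypotheses of the standard $G^{1}$ criterion~\cite{Reif95}.

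\emph{Step 2 (reduction to the first ring).} The eigenrelation $S_{n}(P-C_{P})=\lambda(P-C_{P})$ yields, after the translation normalization that places the image of the extraordinary point at $0$, the self-similarity $\Psi(x/2)=\lambda\Psi(x)$; this uses nothing about the knot intervals being symmetric. Hence $\Psi$ on the parameter domain (the union of the $n$ unit squares meeting at the extraordinary point) is the union of the nested rings $\lambda^{m}R_{0}$, $m\ge0$, where $R_{0}$, the domain with its half-scaled copy removed, is a \emph{finite} union of $O(n)$ piecewise-bicubic patches. It therefore suffices to prove: (i) $\det D\Psi>0$ on $\overline{R_{0}}$; and (ii) $\Psi|_{R_{0}}$ is injective, its image an embedded annulus that winds exactly once around the origin, stays away from the origin, and is monotone in the radial direction. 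Then (i) makes $\Psi$ an immersion off the origin, while (ii) forces the scaled copies $\lambda^{m}R_{0}$ to tile a punctured neighborhood of the origin without overlap, so $\Psi$ is globally regular and injective. Unlike the uniform case one cannot invoke $n$-fold rotational symmetry here, since distinct spokes carry distinct knot intervals; but all patches of $R_{0}$ share the same functional form, the data of the patch at the $j$-th spoke being a fixed rational-trigonometric expression in $n$, $\lambda$ and the few knot ratios near spoke $j$.

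\emph{Step 3 (the ring estimate).} Taking $C_{P}=0$ and using $a_{i}=d_{i}$ (legitimate after enough refinement, as in Section~\ref{sec:proof}) I would read off the B-spline control points of $\Psi|_{R_{0}}$ from the eigenvector $v$: its $P_{j}^{0,0}$-components are the $P_{j}$, while its $P_{j}^{1,0},P_{j}^{0,1},P_{j}^{1,1}$-components are recovered by solving the lower blocks of $S_{n}v=\lambda v$, i.e.\ they equal $(\lambda I-E_{j})^{-1}$, respectively $(\lambda-\tfrac1{16})^{-1}$, times explicit combinations of the $P_{j}$. Thus every control point of $\Psi|_{R_{0}}$ is an explicit rational-trigonometric function of $n$, the knot ratios and $\lambda$. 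On each bicubic patch, $\det D\Psi$ and the partials $\partial_{u}\Psi,\partial_{v}\Psi$ are fixed polynomials in the local parameters with such coefficients; I would pass to Bernstein--B\'ezier form and prove (a) positivity of the relevant B\'ezier coefficients of $\det D\Psi$, which gives (i), and (b) that on $\overline{R_{0}}$ the vectors $\partial_{u}\Psi$ and $\partial_{v}\Psi$ always lie in two disjoint cones of directions and that the cross product of the position vector with the outward tangent keeps one sign. Item (b) yields (ii): each patch is a graph in polar coordinates over an angular interval, adjacent patches --- within a square and across two neighboring squares --- meet along arcs strictly monotone in the angular coordinate, the total angular variation over the $n$ squares is $2\pi$, and $R_{0}$ never reaches the origin (so its $\lambda$-scaled copies nest exactly, since scaling preserves angles).

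The step I expect to be the main obstacle is the uniform positivity in Step 3. The B\'ezier coefficients of $\det D\Psi$, together with the cone-separation and angular-monotonicity inequalities, are $\lambda$-parametrized algebraic expressions in $\cos\tfrac{2\pi}{n}\in[-1,1)$ and in the knot ratios, and they must have the right sign for \emph{every} valence $n\ge3$, \emph{every} choice of positive knot intervals, and \emph{every} $\lambda\in(\tfrac14,1)$ simultaneously. Reducing the Chebyshev dependence, compactifying the knot-ratio domain by a scale normalization such as $\sum_{i}d_{i}=1$, and treating the degenerate limits $d_{i}\to0$ and $\lambda\downarrow\tfrac14$ separately should reduce this to a finite --- though lengthy --- symbolic verification; the value $\lambda=\tfrac12$ is already covered by~\cite{xin18}, and one expects the inequalities to be tightest as $\lambda\downarrow\tfrac14$, where the subdominant eigenvalue merges with the eigenvalue $\tfrac14$ of the blocks $E_{i}$ and extra care is needed. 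A secondary difficulty, absent in the uniform setting, is the global bookkeeping in (ii): without rotational symmetry one must chain the patch-wise monotonicity statements around all $n$ squares by hand to conclude that $R_{0}$ is a single embedded annulus.
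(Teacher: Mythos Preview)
Your overall architecture---realize the characteristic map as the limit of the subdominant eigen-configuration, reduce to one spline ring, pass to B\'ezier form, and check sign conditions---matches the paper. But you miss the one idea that makes Step~3 tractable, and without it the ``main obstacle'' you flag becomes genuinely hopeless rather than merely lengthy.

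The paper does \emph{not} parameterize the B\'ezier data by $\cos\tfrac{2\pi}{n}$ and the knot ratios. Instead it fixes, for each sector $i$, the local frame
\[
p \;=\; P_{i}^{0,0}-C,\qquad v \;=\; E_{i-1}-C,\qquad w \;=\; E_{i}-C,
\]
where $E_{j}=\tfrac{d_{j}}{d_{j}+d_{j+2}}P_{j+1}^{0,0}+\tfrac{d_{j+2}}{d_{j}+d_{j+2}}P_{j}^{0,0}$. Solving the eigenvector equations blockwise (exactly your $(\lambda I-E_{j})^{-1}$ and $(\lambda-\tfrac1{16})^{-1}$ step) and then carrying out the B\'ezier extraction, every control-point difference $S_{2}^{j,k}$ and $T_{2}^{j,k}$ comes out as a linear combination of $p,v,w$ whose three scalar coefficients are rational functions of $\lambda$ \emph{only}: the valence and all knot intervals are absorbed into the frame vectors and disappear from the coefficients entirely. (The Appendix lists these $\lambda$-expressions; the paper then verifies their signs on $(\tfrac14,1)$ by plotting.) Concretely, one finds that each $S_{2}^{j,k}$ is a convex combination of $p,v,-w$ and each $T_{2}^{j,k}$ of $p,-v,w$; since $p,v,w$ are positively independent in the plane, this gives regularity of the patch and confines both partials to fixed disjoint half-planes. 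Your multi-parameter verification over $(\lambda,n,\text{knot ratios})$ is therefore not needed---only a one-parameter check over $\lambda$ remains.

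The same frame also short-circuits your ``secondary difficulty.'' Because $C$ is itself a convex combination of the $E_{i}$ (this is what Equation~\eqref{eq:rule_c} provides), the rays $CE_{0},\dots,CE_{n-1}$ partition the plane into $n$ wedges, and the sign pattern above forces \emph{all} control points of sector $i$ to lie in the wedge between $CE_{i-1}$ and $CE_{i}$. Distinct sectors therefore cannot overlap, regardless of valence or knot intervals, and no angular-monotonicity chaining around the $n$ squares is required. In short: your plan is sound, but the decisive step is the $(p,v,w)$ frame that collapses the parameter space to $\lambda$ alone and replaces your global annulus argument by a simple wedge-containment.
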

\begin{proof}
To prove that the characteristic map is regular and injective, we need a $4\times4$ grid of control points. We first compute the coordinates of this control grid that is used to define the characteristic map. The key idea is based on the fact that applying subdivision to the control grid of a characteristic map is equivalent to scaling the control grid by $\lambda$.

Referring to Figure~\ref{fig:character}, we have control points $P_{i}^{j, k}$, where $0 \leq j, k \leq 3$ ($ 0 \leq i \leq n-1$). According to Lemma~\ref{lemma:char0}, if
we let $P_{i}^{0, 0} = (\cos(\frac{2i\pi}{n}), \sin(\frac{2i\pi}{n})) \in \mathbb{R}^2$, $C = \sum_{i=0}^{n-1}\beta_{i}P_{i}^{0, 0}$, then we have
$S_{n}[P_{0}^{0, 0}-C, \dots, P_{n-1}^{0, 0}-C]^{T} = \lambda[P_{0}^{0, 0}-C, \dots, P_{n-1}^{0, 0}-C]^{T}$.

Further let $E_{i} = \frac{d_{i}}{d_{i}+d_{i+2}}P_{i+1}^{0,0}+\frac{d_{i+2}}{d_{i}+d_{i+2}}P_{i}^{0,0}$, $p = P_{i}^{0,0} - C$, $v = E_{i-1} - C$ and $w = E_{i} - C$.
By definition, we have
\begin{align*}
\frac{1}{4}(\frac{d_{i+1}+2d_{i-1}}{2d_{i+1}+2d_{i-1}}(P_{i}^{1,0} - P_{i}^{0,0})+
\frac{d_{i+1}}{2d_{i+1}+2d_{i-1}}(P_{i-1}^{0,1} - P_{i-1}^{0,0})) + \frac{1}{2}(E_{i-1}-C) &= \lambda(P_{i}^{1,0} - P_{i}^{0,0}), \\
\frac{1}{4}(\frac{d_{i-1}}{2d_{i+1}+2d_{i-1}}(P_{i}^{1,0} - P_{i}^{0,0})+
\frac{d_{i-1}+2d_{i+1}}{2d_{i+1}+2d_{i-1}}(P_{i-1}^{0,1} - P_{i-1}^{0,0})) + \frac{1}{2}(E_{i-1}-C) &= \lambda(P_{i-1}^{0,1} - P_{i-1}^{0,0}).
\end{align*}
Solving the linear systems, we obtain
\begin{equation}
P_{i}^{1, 0} - P_{i}^{0, 0} = P_{i-1}^{0, 1} - P_{i-1}^{0, 0} = \frac{4(1-\lambda)}{4\lambda-1}v + \frac{4(1-2\lambda)}{8\lambda-1}(p - v) .
\end{equation}
Similarly, we compute $P_{i}^{2, 0}$, $P_{i}^{3, 0}$, $P_{i-1}^{0, 3}$, $P_{i-1}^{0, 3}$ as follows,
\begin{align*}
P_{i}^{2, 0} - P_{i}^{1, 0} &= \frac{18(1-\lambda)}{(8\lambda-1)(4\lambda-1)}v + \frac{18(1-2\lambda)}{(16\lambda-1)(8\lambda-1)}(p-v) ,\\
P_{i}^{3, 0} - P_{i}^{2, 0} &= \frac{6(1-\lambda)(1+\lambda)}{(8\lambda-1)\lambda(4\lambda-1)}v + \frac{3(1-4\lambda^{2})}{(16\lambda-1)\lambda(8\lambda-1)}(p-v) .
\end{align*}
We can also compute the remaining control points $P_{i}^{j, k}$ ($1 \leq j, k \leq 3$), whose coefficients are complex expressions in $\lambda$. The detailed expressions are given in the Appendix.

With all these control points, we can now extract the B\'{e}zier control points for patches $P_{1}$, $P_{2}$ and $P_{3}$; see Figure~\ref{fig:character}(b). For example, in the patch $P_{2}$, let $B_{2}^{j, k}$ ($j, k = 0, \dots, 3$) be the $4\times4$ B\'{e}zier control points. We denote $S_{2}^{j, k} = B_{2}^{j+1, k} - B_{2}^{j, k}$ and $T_{2}^{j, k} = B_{2}^{j, k+1} - B_{2}^{j, k}$. All $S_{2}^{j, k}$ and $T_{2}^{j, k}$ can be written as linear combinations of $p$, $v$ and $w$, where the coefficients are again complex expressions in $\lambda$; see Appendix. We further plot some of these coefficients as functions of $\lambda\in (\frac{1}{4}, 1)$; see Figures~\ref{fig:coe} and~\ref{fig:coe1}. We observe that $S_{2}^{j, k}$ are convex combinations of vectors $p$, $v$ and $-w$, while $T_{2}^{j, k}$ are convex combinations of $p$, $-v$ and $w$. Moreover, $C$ is a convex combination of the points $E_{i}$ from Equation~\eqref{eq:rule_c}, so the patch $P_{2}$ is regular and injective. As a result, all the control points $P_{i}^{j, k}$ ($0 \leq j, k \leq 3$) lie in the region bounded by two rays $CE_{i-1}$ and $CE_{i}$, which means that any two different patches must not intersect with one another. Similar results can also be achieved for patches $P_1$ and $P_3$. Therefore, the characteristic map of tHNUS is regular and injective for any $\lambda\in (\frac{1}{4}, 1)$, any valence extraordinary vertices and any positive knot intervals.

\begin{figure}[htbp]
\begin{center}
\begin{tabular}{cccc}
\includegraphics[width=0.22\textwidth]{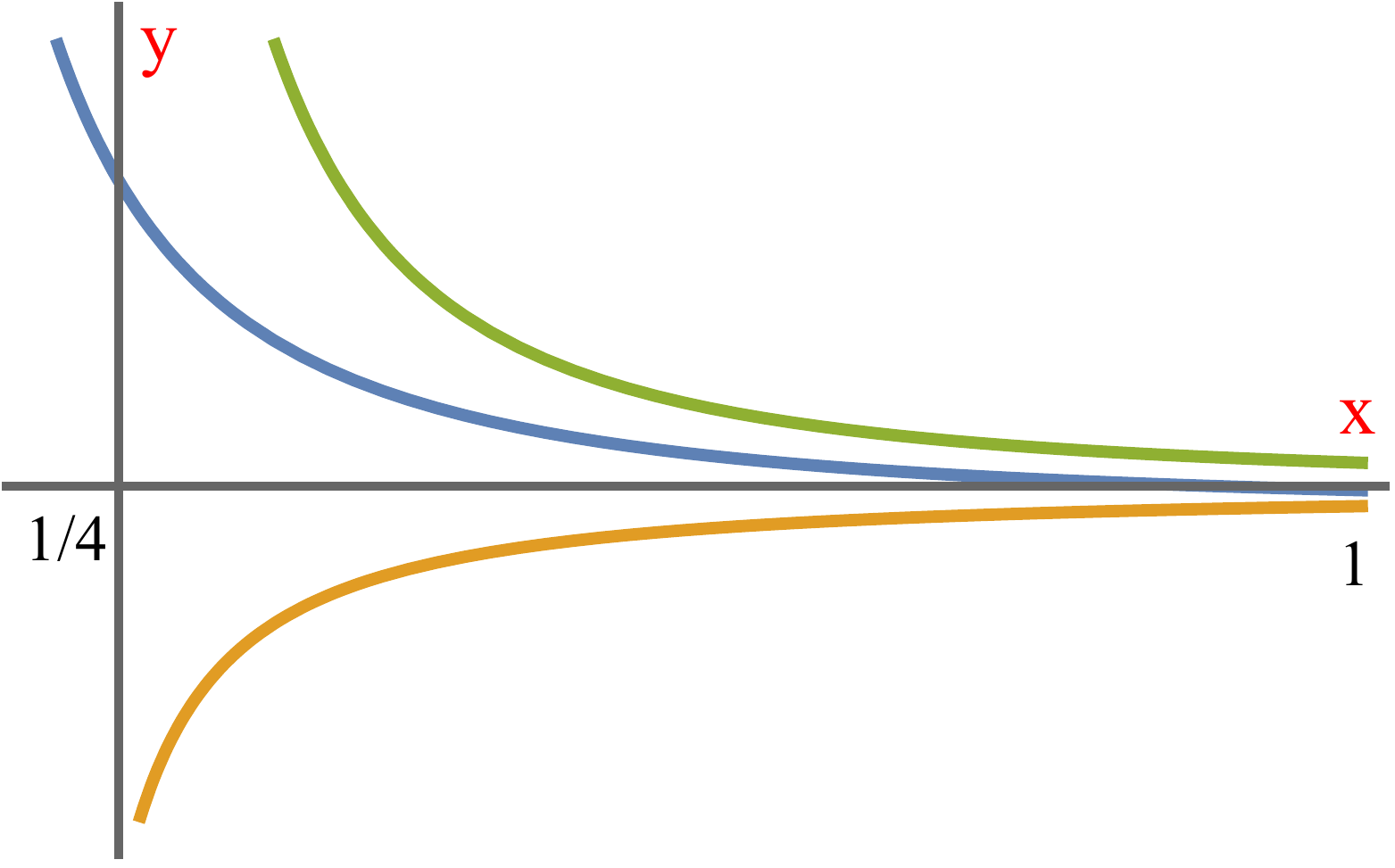}&
\includegraphics[width=0.22\textwidth]{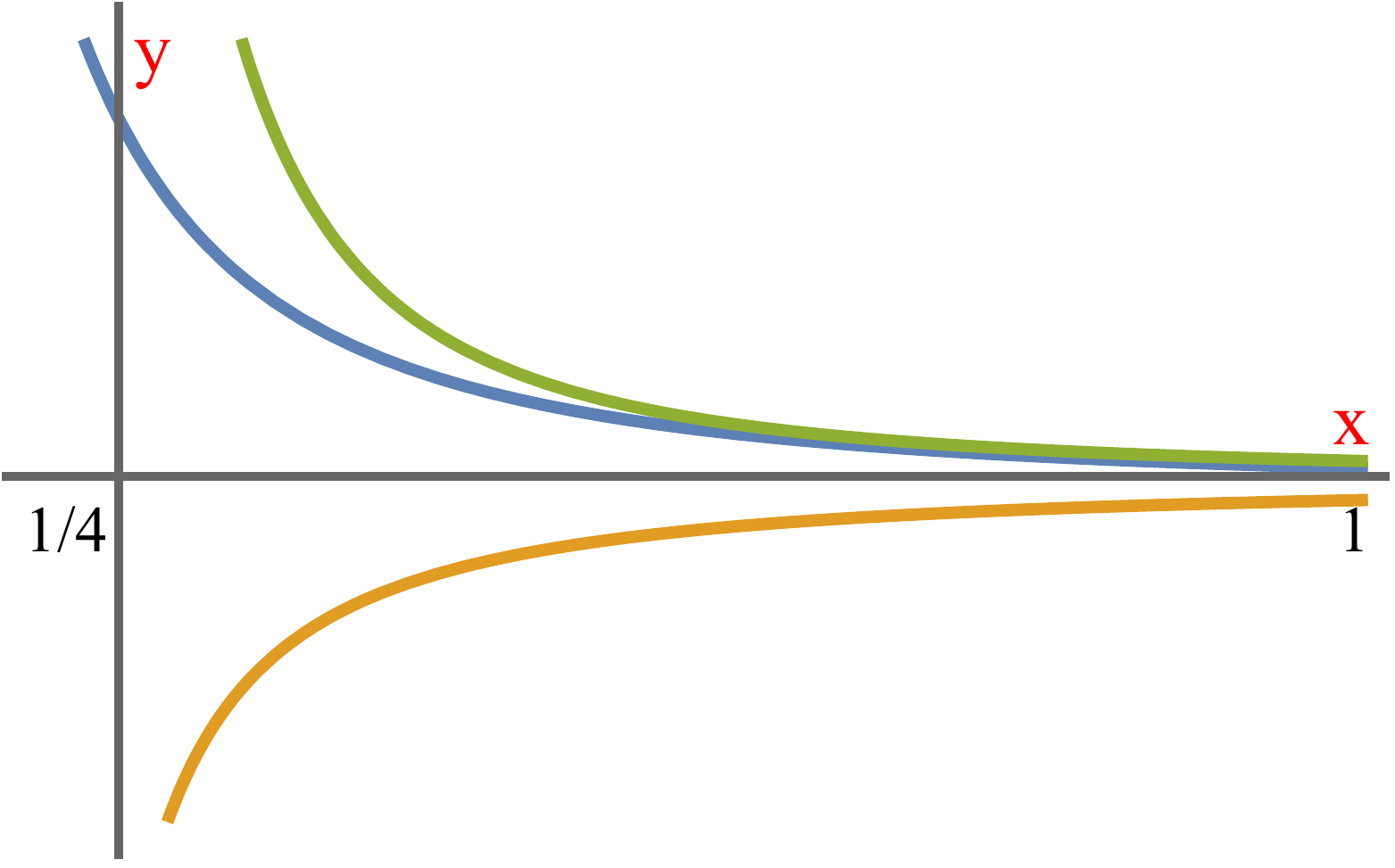}&
\includegraphics[width=0.22\textwidth]{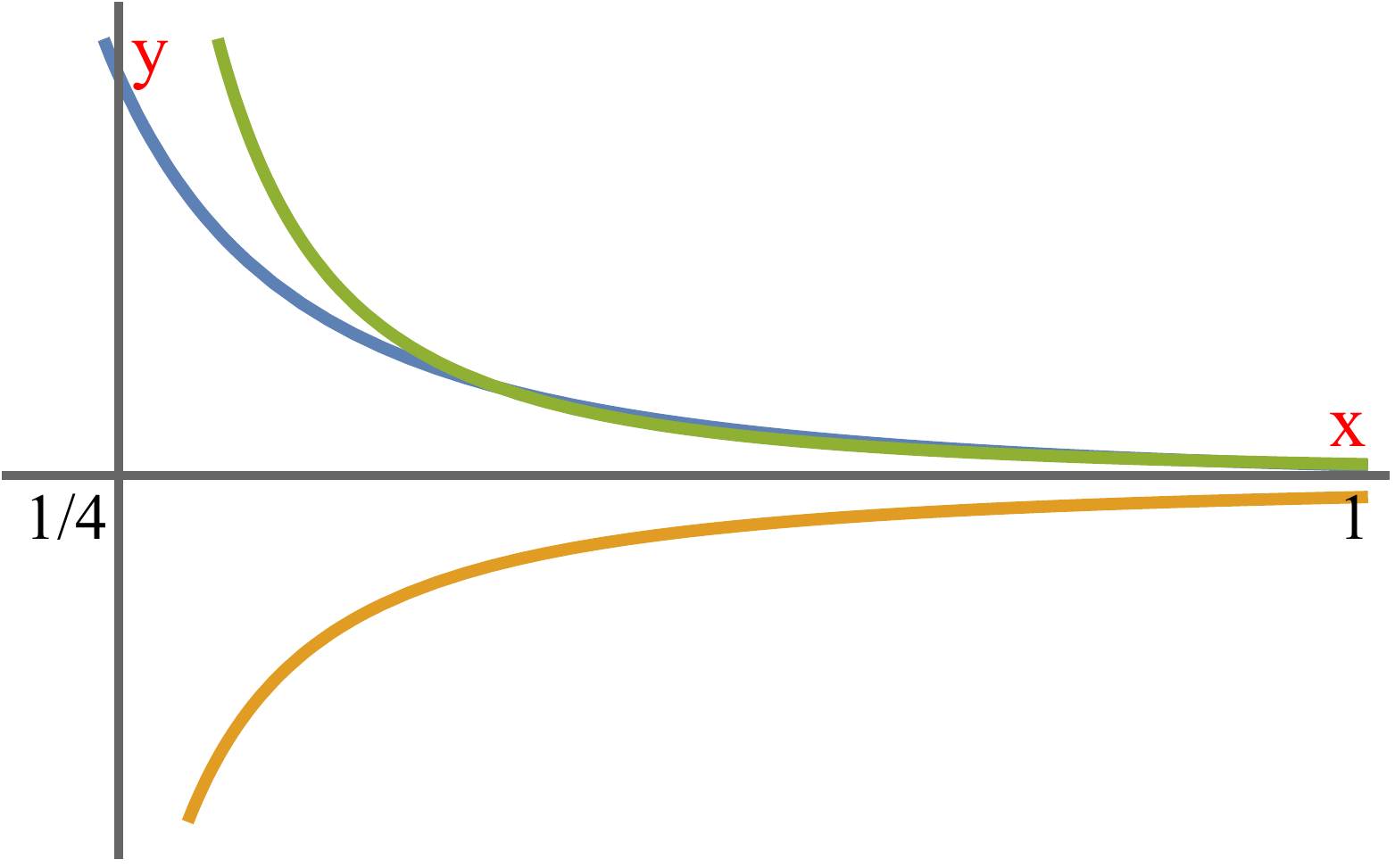}&
\includegraphics[width=0.22\textwidth]{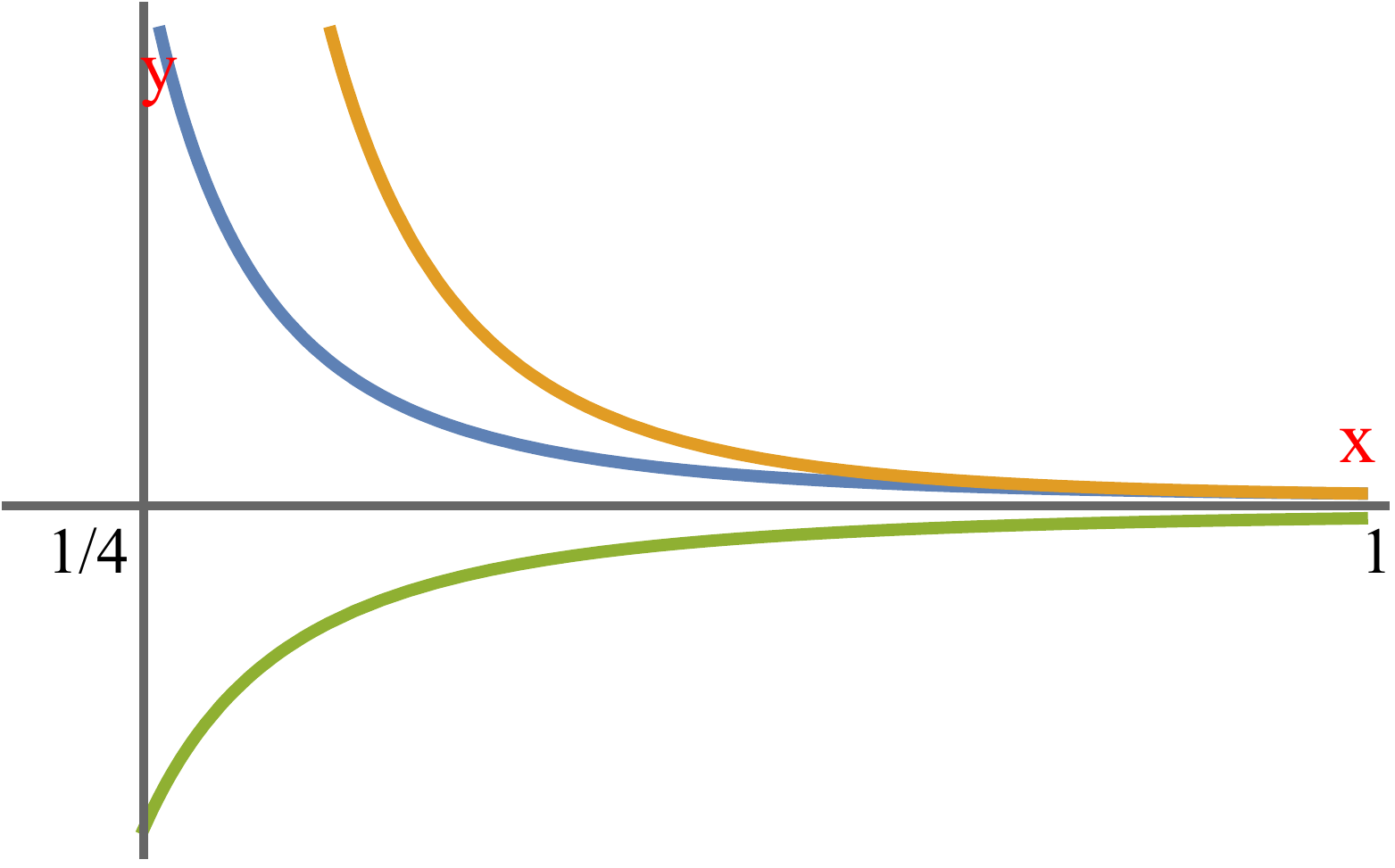}\\
$S_{2}^{0,0}$
&$S_{2}^{1,1}$
&$S_{2}^{2,2}$
&$S_{2}^{2,3}$
\end{tabular}
\end{center}
\caption{The plots of the coefficients of $S_{2}^{j, k}$ in terms of $\lambda\in (\frac{1}{4}, 1)$, where the x-axis represents $\lambda$ and y-axis represents
the value of the coefficients. The green, blue and orange lines represent coefficients corresponding to $v$, $p$ and $w$, respectively. Each $S_{2}^{j, k}$ is a convex combination of $p$, $v$ and $-w$. \label{fig:coe}}
\end{figure}

\begin{figure}[htbp]
\begin{center}
\begin{tabular}{cccc}
\includegraphics[width=0.22\textwidth]{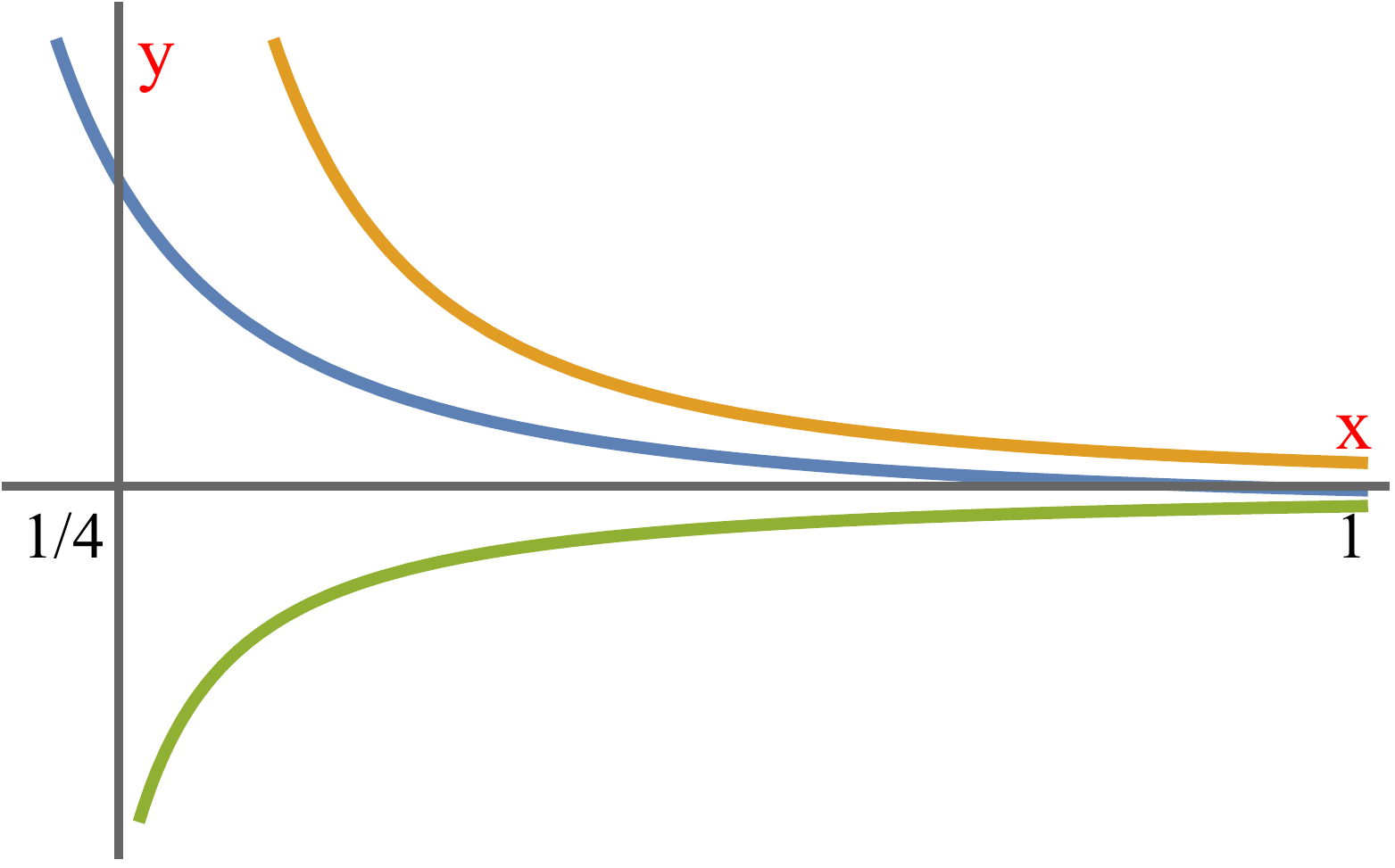}&
\includegraphics[width=0.22\textwidth]{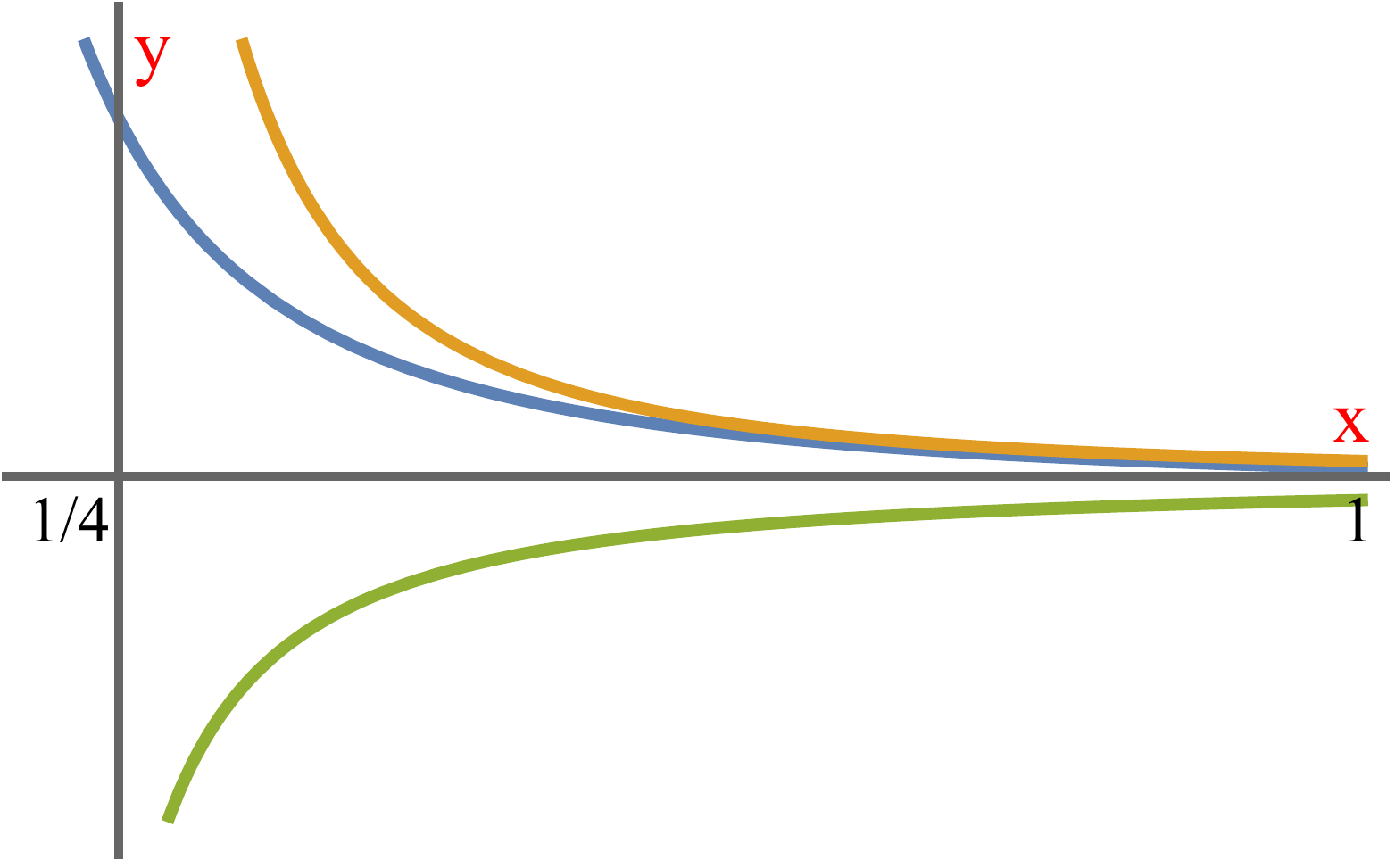}&
\includegraphics[width=0.22\textwidth]{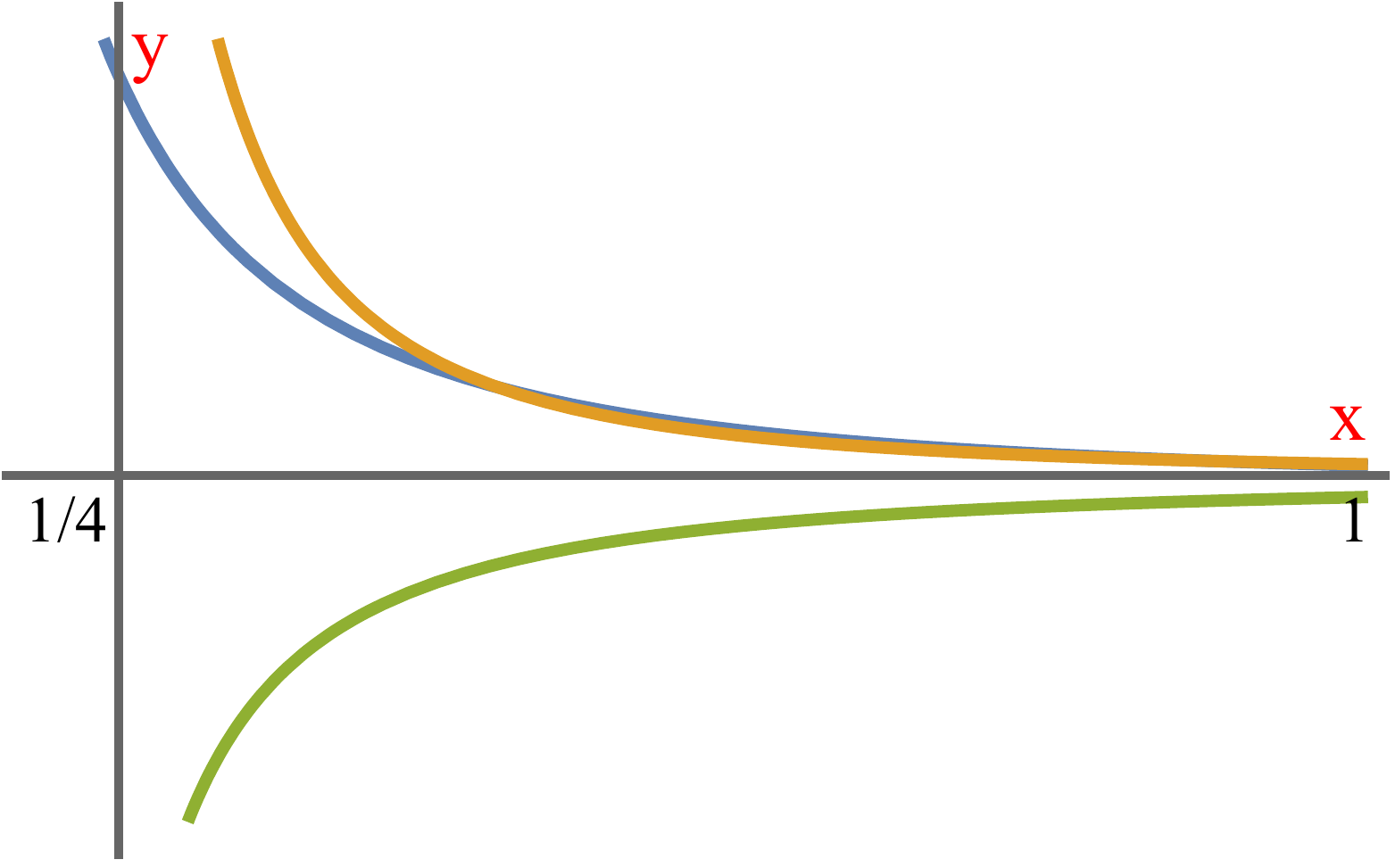}&
\includegraphics[width=0.22\textwidth]{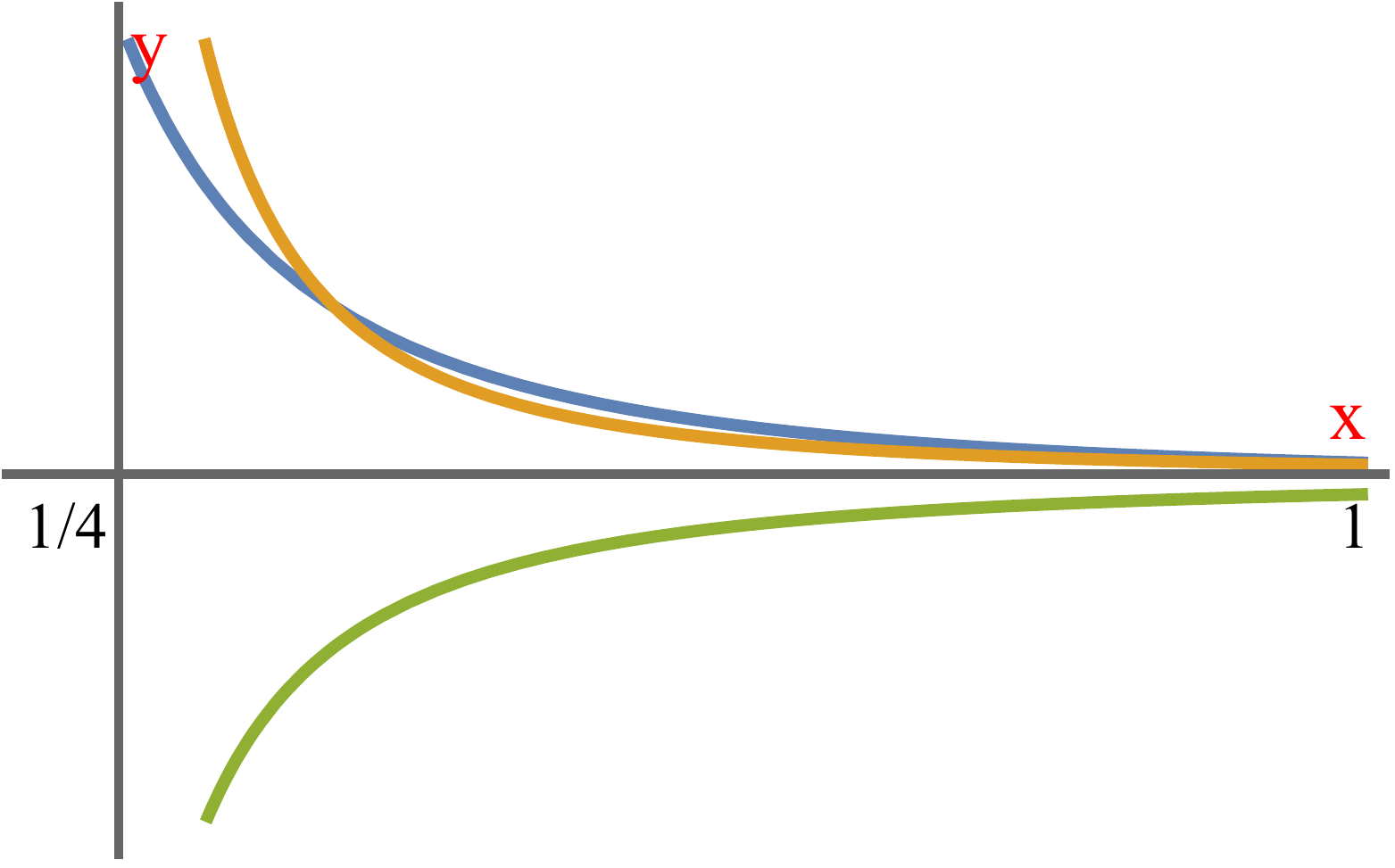}\\
$T_{2}^{0,0}$
& $T_{2}^{1,0}$
&$T_{2}^{2,0}$
&$T_{2}^{3,0}$
\end{tabular}
\end{center}
\caption{The plots of the coefficients of $T_{2}^{j, k}$ in terms of $\lambda\in (\frac{1}{4}, 1)$, where the x-axis represents $\lambda$ and y-axis represents
the value of the coefficients. The green, blue and orange lines represent coefficients corresponding to $v$, $p$ and $w$, respectively. Each $T_{2}^{j, k}$ is a convex combination of $p$, $-v$ and $w$. \label{fig:coe1}}
\end{figure}
\end{proof}

\begin{theorem}
\label{the:char}
Given an arbitrary 2-manifold control mesh with any choice of positive knot intervals and any $\lambda\in (\frac{1}{4}, 1)$, the corresponding tHNUS limit surface is globally $G^1$-continuous.
\end{theorem}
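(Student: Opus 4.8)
The plan is to combine the two structural facts already established in this section—Lemma~\ref{lemma:eigen2} on the spectrum of the subdivision matrix $S_{n}$, and Lemma~\ref{lemma:char} on the regularity and injectivity of the characteristic map—with the classical sufficient condition for $G^{1}$-continuity of stationary subdivision schemes due to Reif~\cite{Reif95}. The argument separates the behavior away from extraordinary vertices from the behavior at each extraordinary point, and then patches the two together.

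First I would dispose of the regular region. Away from the polygonal faces the tHNUS rules coincide with bicubic NURBS knot insertion, so the limit surface is there a piecewise bicubic NURBS surface; the only effect of the inserted zero-knot-interval edges is to reduce the smoothness from $C^{2}$ to $C^{1}$ across those isolated curves, as noted in Section~\ref{sec:ehs}. Hence on any open set disjoint from the extraordinary vertices the surface is $C^{1}$, and a fortiori $G^{1}$. It then remains only to establish $G^{1}$-continuity at each extraordinary point.

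Next, fixing an extraordinary vertex of valence $n$, I would reduce to the stationary setting: after finitely many subdivision steps all knot intervals incident to the central polygonal face satisfy $a_{i}=d_{i}$, so the control net of the corresponding surface ring is governed by the fixed matrix $S_{n}$, and the finitely many non-stationary initial steps do not affect the smoothness at the limit point. By Lemma~\ref{lemma:eigen2}, $S_{n}$ has eigenvalues $1=\lambda_{1}>\lambda_{2}=\lambda_{3}=\lambda>|\lambda_{k}|$ for $k=4,\dots,4n$. I would then verify the remaining hypotheses of Reif's criterion beyond these inequalities: (i) $\lambda$ is a semisimple eigenvalue with a two-dimensional eigenspace, which follows from the block structure $S_{n}=\mathrm{diag}(Q_{n},E_{0},\dots,E_{n-1},\tfrac{1}{16}I_{n})$ together with the Fourier block-diagonalization of $Q_{n}$ in the proof of Lemma~\ref{lemma:eigen}, where $\lambda$ occurs once in the $k=1$ and once in the $k=n-1$ Fourier component, each as a simple eigenvalue (so there is no Jordan block); (ii) a real basis of this eigenspace is obtained by combining the two conjugate Fourier modes, and—by Lemma~\ref{lemma:char0} and the control-grid computations following it—this is precisely the grid whose limit is the characteristic map analyzed in Lemma~\ref{lemma:char}; (iii) the two-ring of the limit surface around the central face forms a regular atlas, the central polygon shrinking to the limit point at rate $\lambda$. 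Granting these, Lemma~\ref{lemma:char} supplies the last ingredient—the characteristic map is regular and injective for every $\lambda\in(\tfrac{1}{4},1)$, every valence, and every positive knot sequence.

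With all hypotheses in place, Reif's theorem yields that the limit surface, reparametrized by the (invertible) characteristic map, is $C^{1}$ in a neighborhood of the extraordinary point, i.e.\ the surface is $G^{1}$ there. Since this holds at every extraordinary vertex and the surface is $C^{1}$ everywhere else, and since the tHNUS basis is refinable with nested, convergent spline spaces (Section~\ref{sec:basis}), the limit surface is globally $G^{1}$-continuous. I expect the main obstacle to be items (i)–(iii): verifying in the non-uniform setting that $\lambda$ really is a semisimple double eigenvalue whose real eigenvectors reproduce the control net of Lemma~\ref{lemma:char}, and that the inserted central polygon does not obstruct the ring/atlas structure demanded by Reif's framework. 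The eigenvalue inequalities are already handled by Lemma~\ref{lemma:eigen2}, and the geometric heart of the matter—regularity and injectivity of the characteristic map—is Lemma~\ref{lemma:char}, so the remaining work is structural bookkeeping rather than new estimates.
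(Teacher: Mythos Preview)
Your proposal is correct and follows the same approach as the paper, which simply states that the theorem is a direct consequence of Lemma~\ref{lemma:eigen}, Lemma~\ref{lemma:eigen2}, and Lemma~\ref{lemma:char} (implicitly via Reif's criterion~\cite{Reif95}); you have spelled out the bookkeeping that the paper leaves tacit. One small slip: $S_{n}$ is block \emph{lower-triangular}, not block diagonal, but since $\lambda$ is not an eigenvalue of any $E_{j}$ or of $\tfrac{1}{16}I_{n}$, the semisimplicity and eigenspace structure you need still transfer from $Q_{n}$ to $S_{n}$ exactly as you argue.
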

\begin{proof}
The theorem is a direct result of Lemma~\ref{lemma:eigen}, Lemma~\ref{lemma:eigen2} and Lemma~\ref{lemma:char}.
\end{proof}

\section{Hybrid subdivision basis functions}
\label{sec:basis}

In this section, we introduce basis functions of hybrid non-uniform subdivision. The derivation of such subdivision functions essentially follows Stam's method for Catmull-Clark subdivision \cite{ref:stam98}. However, there are two major differences. First, Catmull-Clark basis functions are associated with the input quadrilateral control mesh, whereas tHNUS basis functions are associated with the hybrid control mesh; see Figure \ref{fig:topo}(a). Second, Catmull-Clark subdivision features uniform knot intervals everywhere, leading to a subdivision matrix that only depends on the valence of a particular extraordinary vertex. In contrast, tHNUS (or HNUS) supports general non-uniform knot intervals, so the subdivision matrix depends not only on the valence of the extraordinary vertex, but also on the surrounding knot intervals.

\subsection{Definition of basis functions}
\label{sec:defbf}

We now introduce how tHNUS basis functions are defined on a hybrid control mesh. We start with distinguishing different types of faces. Recall that there exists both quadrilateral and polygonal faces in the hybrid mesh, and each edge in a polygonal face is assigned with a zero knot interval by construction. The knot intervals of other edges inherit from the input quadrilateral mesh and are constrained by the assumption that opposite edges in a quadrilateral face have the same knot interval. Moreover, note that edges perpendicular to the boundary also have zero knot intervals to make use of open knot vectors. An example of the knot interval configuration is shown in Figure \ref{fig:meshterm}(b), where the hybrid mesh is obtained from the input mesh in Figure \ref{fig:meshterm}(a).

\begin{figure}[htb]
\centering
\begin{tabular}{ccc}
\includegraphics[width=0.3\linewidth]{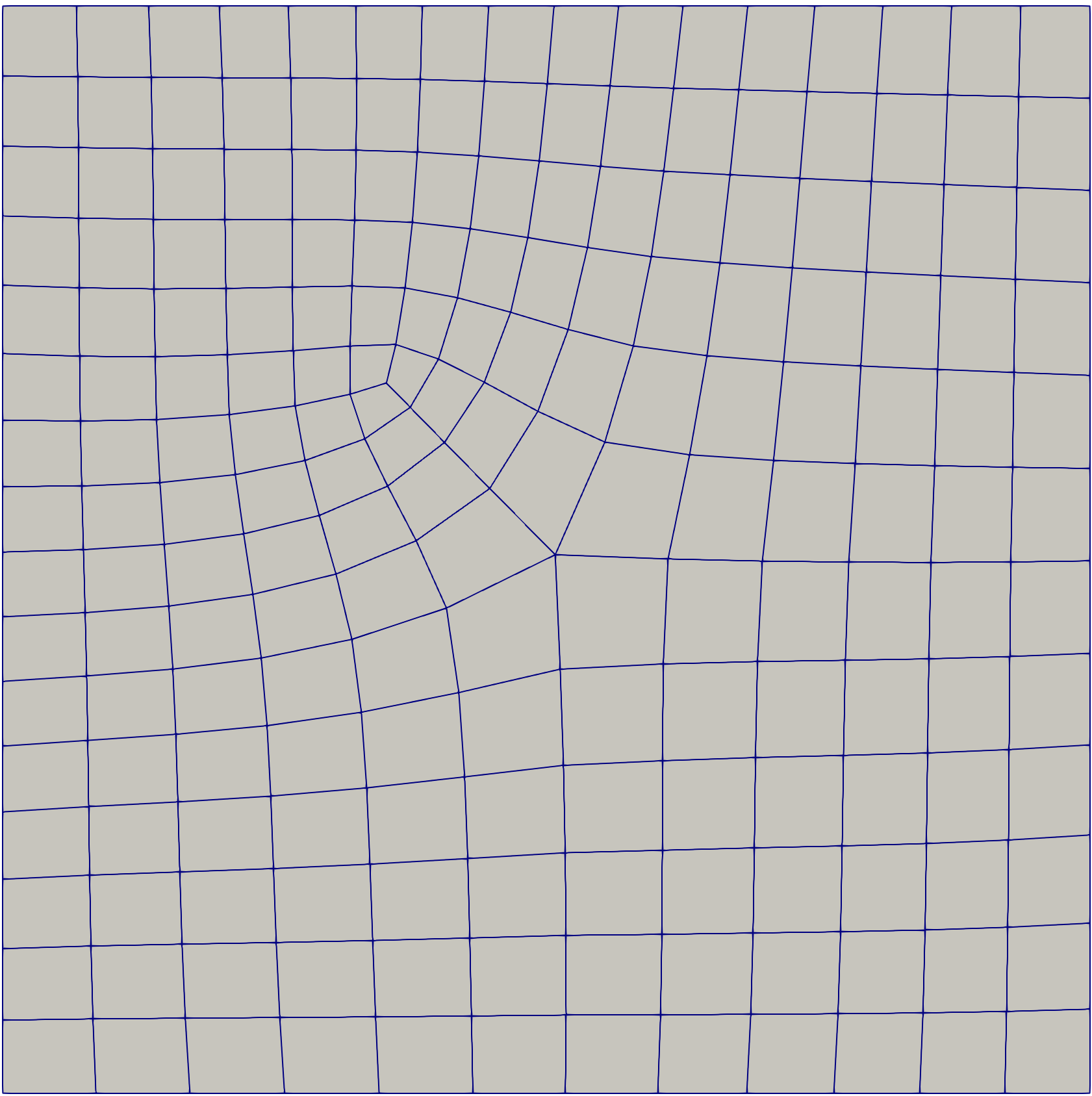}&
\includegraphics[width=0.3\linewidth]{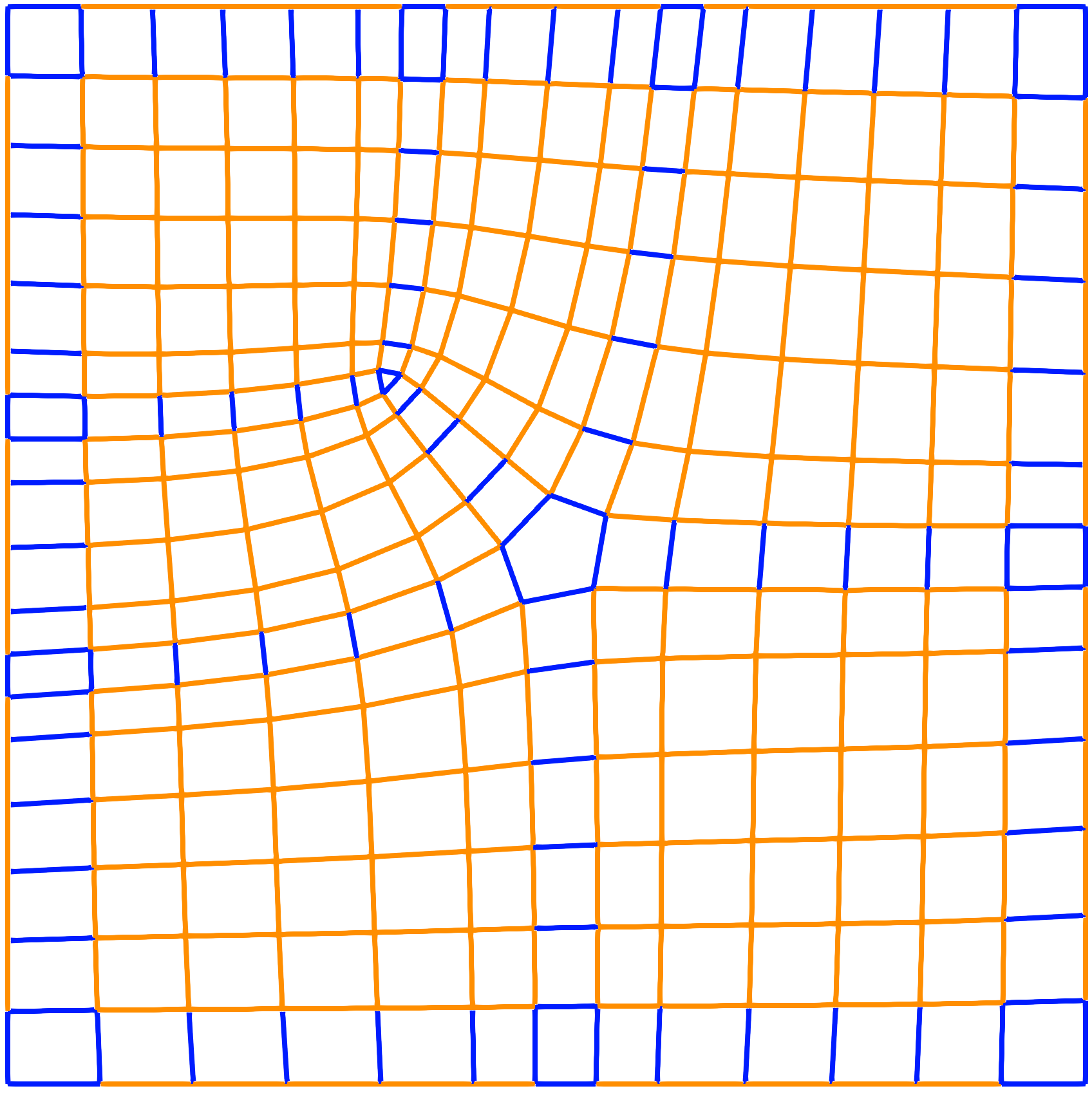}&
\includegraphics[width=0.3\linewidth]{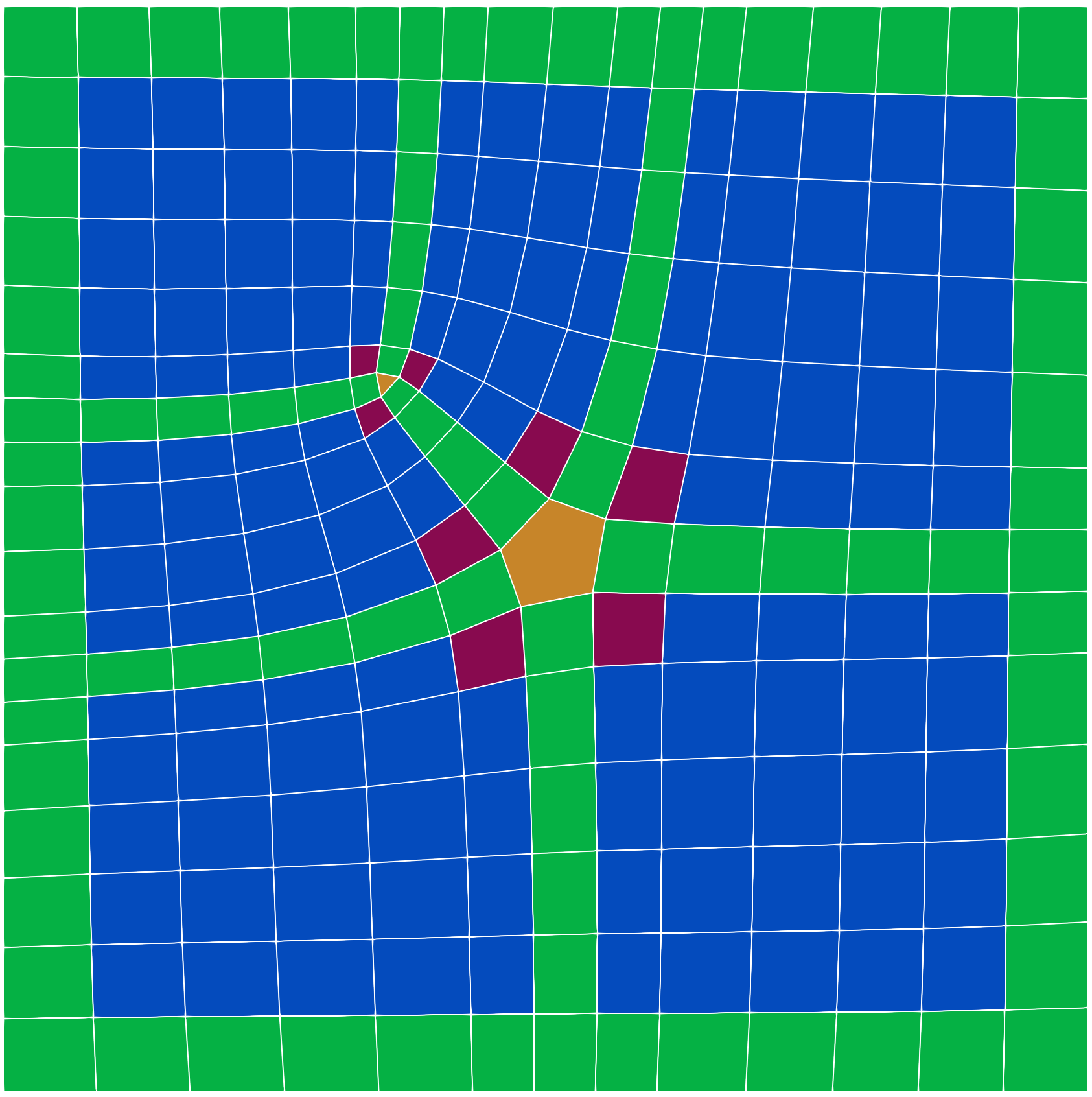}\\
(a) & (b) & (c) \\
\end{tabular}
\caption{Knot intervals and mesh terminologies. (a) The input quadrilateral mesh, (b) edges with zero knot intervals (blue) and nonzero intervals (orange), and (c) different types of faces: quadrilateral faces with zero-measure (green), polygonal faces (orange), regular faces (blue), and irregular faces (red).}
\label{fig:meshterm}
\end{figure}

We identify faces of zero-measure and nonzero-measure according to their parametric areas, which are computed using knot intervals. Zero-measure faces are not used in geometric representation and have no contribution to analysis. Note that all the polygonal faces and boundary faces have a zero-measure. The nonzero-measure faces, on the other hand, are divided into \emph{regular} and \emph{irregular} faces. An irregular face is a nonzero-measure face that shares a vertex with a certain polygonal face; all the other nonzero-measure faces are regular; see Figure \ref{fig:meshterm}(c). The tHNUS basis functions defined on a regular element\footnote{We use \emph{face} and \emph{element} interchangeably, but ``face" emphasizes mesh topology whereas ``element" is IGA-oriented.} are simply B-spline basis functions. In what follows, we restrict our attention to those defined on an irregular element. For simplicity of explanation, we assume that there is only one polygonal face in the 1-ring neighborhood of an irregular element. The $1$-ring neighborhood of a face is a collection of faces that share vertices with this face, and recursively, the $n$-ring ($n\geq 2$) neighborhood consists of faces in the $(n-1)$-ring neighborhood as well as the faces sharing vertices with the $(n-1)$-ring neighborhood.

\begin{myremark}
In a hybrid control mesh, each interior vertex is shared by four faces (or edges) and thus it has a regular valence of four. However, it does not mean that mesh irregularities are removed by converting the input quadrilateral mesh to its hybrid counterpart. In fact, irregularities are now manifested in the polygonal faces, which will be detailed in the following.
\end{myremark}


Given an irregular element $\Omega$, let $N$ denote the number of vertices in its adjacent polygonal face which is equivalent to the valence of the corresponding extraordinary vertex in the input quadrilateral mesh. There are $K:=N+12$ basis functions defined on $\Omega$, associated with a local mesh around the polygonal face; see Figure \ref{fig:localmesh}(a). We denote
$$
\mathbf{B}_0(u,v) = [B_{0,1}(u,v), B_{0,2}(u,v), \ldots, B_{0,K}(u,v)]^T
$$
and
$$
\mathbf{P}_0 = [P_{0,1}, P_{0,2}, \ldots, P_{0,K}]^T
$$
the basis functions and the corresponding control vertices, respectively. Their indices are ordered according to Figure \ref{fig:localmesh}(a). The surface patch, i.e., the geometric mapping restricted to $\Omega$ is then represented by
\begin{equation}
s(u,v) = \mathbf{P}_0^{T} \mathbf{B}_0(u,v), \quad (u,v)\in\Omega.
\label{eq:s0}
\end{equation}
Note that $\Omega$ naturally has a parametric domain $[0,d_1]\times [0,d_2]$ that is determined by the corresponding knot intervals. We rescale it to $\Omega=[0,1]^2$ to unify the treatment of irregular elements. The influence of the rescaling will be discussed in Remark \ref{rmk:rescale}.


\begin{figure}[htb]
\centering
\begin{tabular}{cc}
\includegraphics[width=0.3\linewidth]{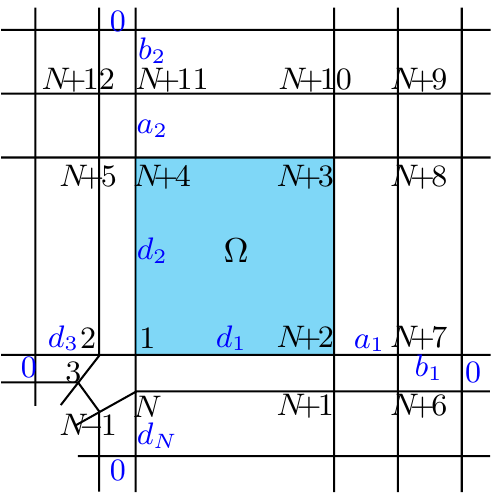}&
\includegraphics[width=0.3\linewidth]{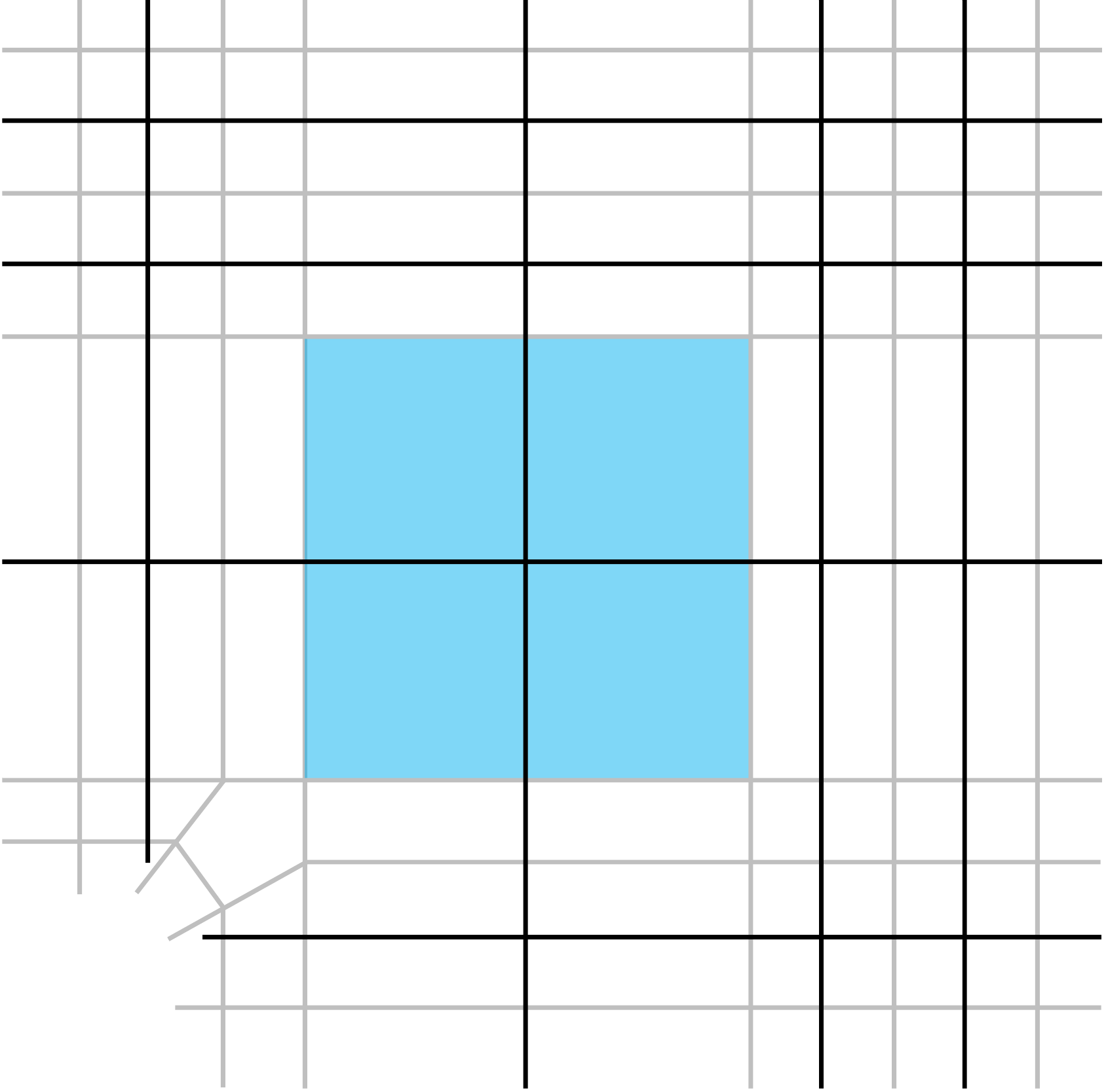}\\
(a) & (b)\\
\end{tabular}
\begin{tabular}{ccc}
\includegraphics[width=0.3\linewidth]{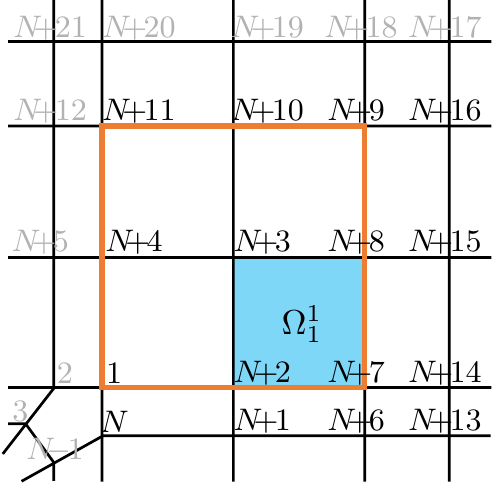}&
\includegraphics[width=0.3\linewidth]{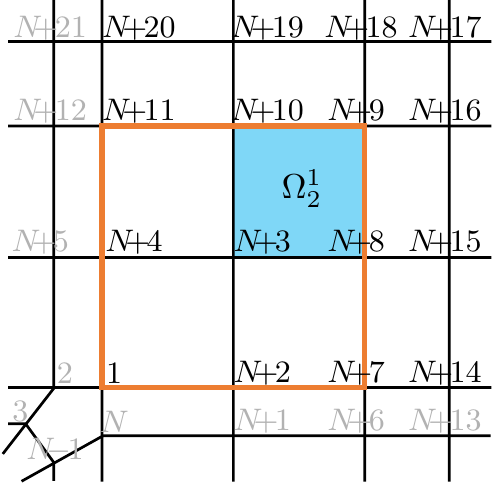}&
\includegraphics[width=0.3\linewidth]{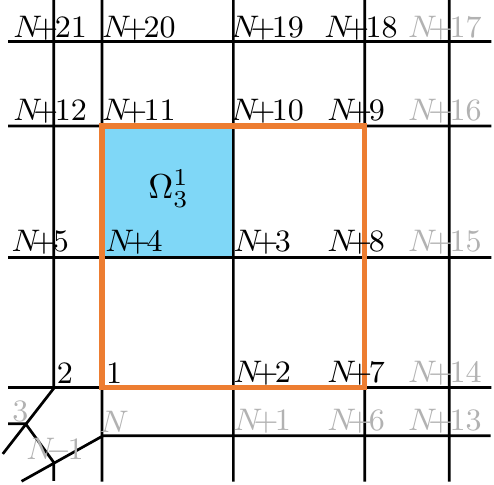}\\
(c) & (d) & (e)\\
\end{tabular}
\caption{Local meshes of an irregular element $\Omega$ and its refined subelements $\Omega_k^1$ ($k=1,2,3$). (a) The local mesh and surrounding knot intervals of $\Omega$, (b) the globally refined mesh, and (c--e) the local meshes of $\Omega_k^1$, where the orange lines indicate the boundary of $\Omega$, and indices in light gray imply that the corresponding basis functions have no support on the highlighted subelement.}
\label{fig:localmesh}
\end{figure}

Our focus is to derive $\mathbf{B}_0$, which relies on subdivision of the corresponding control mesh $\mathbf{P}_0$. Applying subdivision once yields Level-1 control vertices, denoted by
\begin{align}
\mathbf{P}_1 &= [P_{1,1}, P_{1,2}, \ldots, P_{1,K}]^T = \mathbf{S}_1 \mathbf{P}_0, \nonumber \\
\bar{\mathbf{P}}_1 &= [P_{1,1}, P_{1,2},  \ldots, P_{1,K}, P_{1,K+1},\ldots, P_{1,M}]^T = \bar{\mathbf{S}}_1 \mathbf{P}_0,
\label{eq:sub0}
\end{align}
where $M:=K+9=N+21$. The subdivision matrices $\mathbf{S}_1$ and $\bar{\mathbf{S}}_1$ have the dimension of $K\times K$ and $M\times K$, respectively. Clearly, $\bar{\mathbf{S}}_1$ yields additional 9 vertices compared to $\mathbf{S}_1$. The entries of $\mathbf{S}_1$ and $\bar{\mathbf{S}}_1$ come from the tHNUS geometric rules as well as the mid-knot insertion of B-splines, see Equations (\ref{eq:rule_irr}, \ref{eq:rule_c}) and (\ref{eq:rule_reg}, \ref{eq:rule_regm}), respectively. Among the four subelements at Level 1, three of them $\Omega_k^1$ ($k=1,2,3$) are regular and correspond to regular $C^1$ B-spline patches. In other words, the surface patch restricted to $\Omega_k^1$ is given by
\begin{equation}
s(u,v) = \mathbf{P}_{1,k}^{T} \mathbf{N}_{1,k}(u,v), \quad (u,v)\in \Omega_k^1 \subset \Omega,
\label{eq:subsurf}
\end{equation}
where $\mathbf{P}_{1,k}$ is a subvector of $\bar{\mathbf{P}}_1$ and $\mathbf{N}_{1,k}(u,v)$ is the vector of B-splines defined on $\Omega_k^1$ ($k=1,2,3$); see Figure \ref{fig:localmesh}(c--e). Both $\mathbf{P}_{1,k}$ and $\mathbf{N}_{1,k}$ have a dimension of 16 due to the bicubic degree setting. $\mathbf{P}_{1,k}$ can be obtained with the help of a permutation matrix $\mathbf{T}_k$, i.e., $\mathbf{P}_{1,k}=\mathbf{T}_k \bar{\mathbf{P}}_1 = \mathbf{T}_k \bar{\mathbf{S}}_1 \mathbf{P}_0$. Therefore, Equation \eqref{eq:subsurf} becomes
\begin{equation}
s(u,v) = (\mathbf{T}_k \bar{\mathbf{S}}_1 \mathbf{P}_0)^T \mathbf{N}_{1,k}(u,v) = \mathbf{P}_{0}^T (\mathbf{T}_k \bar{\mathbf{S}}_1)^{T} \mathbf{N}_{1,k}(u,v), \quad (u,v)\in\Omega_k^1.
\label{eq:s1}
\end{equation}
For Equations (\ref{eq:s0}, \ref{eq:s1}) to be equivalent under arbitrary choice of $\mathbf{P}^0$, we need
\begin{equation}
\mathbf{B}_0(u,v) = (\mathbf{T}_k \bar{\mathbf{S}}_1)^{T} \mathbf{N}_{1,k}(u,v), \quad (u,v)\in \Omega_k^1 .
\label{eq:B01}
\end{equation}
In other words, we have found the definition of $\mathbf{B}_0$ on three quarters ($\Omega_1^1$, $\Omega_2^1$ and $\Omega_3^1$) of $\Omega$.


Now we are left to find the definition of $\mathbf{B}_0$ on the remaining quarter $[0,\frac{1}{2}]^2$, and we proceed with the same idea explained above. As a result, the domain $\Omega$ is partitioned into an infinite series of tiles,
$$
\Omega = \bigcup_{n=1}^{\infty} \bigcup_{k=1}^3 \Omega_k^n,
$$
where
\begin{align*}
\Omega_1^n &= \left[\frac{1}{2^n}, \frac{1}{2^{n-1}} \right] \times \left[0,\frac{1}{2^n} \right], \\
\Omega_2^n &= \left[\frac{1}{2^n}, \frac{1}{2^{n-1}} \right] \times \left[\frac{1}{2^n},\frac{1}{2^{n-1}} \right], \\
\Omega_3^n &= \left[0,\frac{1}{2^n} \right] \times \left[\frac{1}{2^n},\frac{1}{2^{n-1}}\right].
\end{align*}
Analogous to Equation \eqref{eq:sub0}, we have
\begin{align*}
\mathbf{P}_n &=\mathbf{S}_n \mathbf{P}_{n-1} = \mathbf{S}_n \mathbf{S}_{n-1} \cdots \mathbf{S}_1 \mathbf{P}_0, \\
\bar{\mathbf{P}}_n &=\bar{\mathbf{S}}_n \mathbf{P}_{n-1} = \bar{\mathbf{S}}_n \mathbf{S}_{n-1} \cdots \mathbf{S}_1 \mathbf{P}_0.
\end{align*}
Again, $\mathbf{S}_n$ and $\bar{\mathbf{S}}_n$ ($n\geq 1$) have the dimension of $K\times K$ and $M\times K$, respectively. Note that $\mathbf{S}_2=\mathbf{S}_3=\cdots =\mathbf{S}_n$ ($n\geq 2$) and $\bar{\mathbf{S}}_3 = \bar{\mathbf{S}}_4 = \cdots = \bar{\mathbf{S}}_n$ ($n\geq 3$) because the ratios of knot intervals around an irregular subelement become fixed as the subdivision level increases. Therefore, following the same argument in deriving Equation \eqref{eq:B01}, we have the general expressions for $\mathbf{B}_0$,
\begin{equation*}
\mathbf{B}_0(u,v) =
\begin{cases}
(\mathbf{T}_k \bar{\mathbf{S}}_{1})^{T} \mathbf{N}_{1,k}(u,v) & n=1 \\
\left( \mathbf{T}_k \bar{\mathbf{S}}_{2} \mathbf{S}_1 \right)^{T} \mathbf{N}_{2,k}(u,v) & n=2 \\
\left( \mathbf{T}_k \bar{\mathbf{S}}_{3} \left(\mathbf{S}_2\right)^{n-2} \mathbf{S}_1 \right)^{T} \mathbf{N}_{n,k}(u,v) & n\geq 3
\end{cases},
\end{equation*}
where $(u,v)\in\Omega_k^n$, and $\mathbf{N}_{n,k}(u,v)$ ($n\geq 1$) is the vector of B-splines defined on $\Omega_k^n$.

\begin{myremark}
Rescaling $\Omega$ to $[0,1]^2$ only affects the knot vectors (or equivalently, the vectors of knot intervals) of B-splines $\mathbf{N}_{n,k}(u,v)$ ($n\geq 1$). For example, when $n=1$ without scaling, the vector of knot intervals in the $u$ direction is
$$\left\{\frac{d_3}{2},\frac{d_3}{2},0,\frac{d_1}{2},\frac{d_1}{2},\frac{a_1}{2},\frac{a_1}{2},\frac{b_1}{2}\right\},$$
whereas after scaling with respect to $d_1$, it becomes
$$U_1 = \left\{\frac{d_3}{2 d_1},\frac{d_3}{2 d_1},0,\frac{1}{2},\frac{1}{2},\frac{a_1}{2 d_1},\frac{a_1}{2 d_1},\frac{b_1}{2 d_1}\right\},$$
and similarly, we have the vector of knot intervals in the $v$ direction,
$$V_1 = \left\{\frac{d_{N}}{2 d_2},\frac{d_{N}}{2 d_2},0,\frac{1}{2},\frac{1}{2},\frac{a_2}{2 d_2},\frac{a_2}{2 d_2},\frac{b_2}{2 d_2}\right\}. $$
Moreover, when $n=2$, we have
$$
U_2 = \left\{\frac{d_3}{2^2 d_1},\frac{d_3}{2^2 d_1},0,\frac{1}{2^2},\frac{1}{2^2},\frac{1}{2^2},\frac{1}{2^2},\frac{a_1}{2^2 d_1}\right\},\
V_2  = \left\{\frac{d_{N}}{2^2 d_2},\frac{d_{N}}{2^2 d_2},0,\frac{1}{2^2},\frac{1}{2^2},\frac{1}{2^2},\frac{1}{2^2},\frac{a_2}{2^2 d_2}\right\},
$$
and when $n\geq 3$,
$$
U_n = \left\{\frac{d_3}{2^n d_1},\frac{d_3}{2^n d_1},0,\frac{1}{2^n},\frac{1}{2^n},\frac{1}{2^n},\frac{1}{2^n},\frac{1}{2^n}\right\}, \
V_n  = \left\{\frac{d_{N}}{2^n d_2},\frac{d_{N}}{2^n d_2},0,\frac{1}{2^n},\frac{1}{2^n},\frac{1}{2^n},\frac{1}{2^n},\frac{1}{2^n}\right\}.
$$
$\mathbf{N}_{n,k}(u,v)$ are defined using $U_n$ and $V_n$.
\label{rmk:rescale}
\end{myremark}

In fact, it is practically useful to rescale each tile $\Omega_k^n$ to $[0,1]^2$ by
$$
\begin{array}{lll}
k=1 \qquad & \xi = 2^n u-1, & \eta = 2^n v, \\
k=2 \qquad & \xi = 2^n u-1, & \eta = 2^n v -1, \\
k=3 \qquad & \xi = 2^n u,     & \eta = 2^n v -1. \\
\end{array}
$$
Correspondingly, the vectors of knot intervals are rescaled to
$$
\begin{array}{lll}
n=1 & \Xi_1 = \left\{ \frac{d_3}{d_1}, \frac{d_3}{d_1}, 0,1,1, \frac{a_1}{d_1},\frac{a_1}{d_1},\frac{b_1}{d_1}  \right\}, & \Theta_1 = \left\{ \frac{d_N}{d_2}, \frac{d_N}{d_2}, 0,1,1, \frac{a_2}{d_2},\frac{a_2}{d_2},\frac{b_2}{d_2} \right\}, \\
n=2 & \Xi_2 = \left\{ \frac{d_3}{d_1}, \frac{d_3}{d_1}, 0,1,1, 1, 1,\frac{a_1}{d_1} \right\}, & \Theta_2 = \left\{ \frac{d_N}{d_2}, \frac{d_N}{d_2}, 0,1,1, 1,1,\frac{a_2}{d_2} \right\}, \\
n\geq 3 & \Xi_3 = \left\{ \frac{d_3}{d_1}, \frac{d_3}{d_1}, 0,1,1, 1, 1,1  \right\}, & \Theta_3 = \left\{ \frac{d_N}{d_2}, \frac{d_N}{d_2}, 0,1,1, 1,1,1 \right\}, \\
\end{array}
$$
where the rescaled knot intervals are now independent of the subdivision level $n$ when $n \geq 3$. In summary, the basis functions of interest are defined as
\begin{equation}
\mathbf{B}_0(u,v) =
\begin{cases}
(\mathbf{T}_k \bar{\mathbf{S}}_{1})^{T} \mathbf{b}_{1,k}(\xi(u),\eta(v)) & n=1 \\
\left( \mathbf{T}_k \bar{\mathbf{S}}_{2} \mathbf{S}_1 \right)^{T} \mathbf{b}_{2,k}(\xi(u),\eta(v)) & n=2 \\
\left( \mathbf{T}_k \bar{\mathbf{S}}_{3} \left(\mathbf{S}_2\right)^{n-2} \mathbf{S}_1 \right)^{T} \mathbf{b}_{3,k}(\xi(u),\eta(v)) & n\geq 3
\end{cases},
\end{equation}
where $(\xi,\eta)\in [0,1]^2$ and $\mathbf{b}_{l,k}(\xi,\eta)$ are B-splines defined using $\Xi_l$ and $\Theta_l$ ($l= 1,2,3$). Note that when $n=1,2$, we have $\mathbf{N}_{n,k}(u,v) = \mathbf{b}_{n,k}(\xi(u),\eta(v))$, and when $n\geq 3$, $\mathbf{N}_{n,k}(u,v) = \mathbf{b}_{3,k}(\xi(u),\eta(v))$.



\begin{myremark}
Evaluation of $\mathbf{B}_0(u,v)$ at $(0,0)$ in an irregular element needs the computation of $\lim_{n\to\infty} (\mathbf{S}_2)^{n-2}$. Following \cite{ref:stam98}, we need to eigen-decompose $\mathbf{S}_2$ such that $\mathbf{S}_2 = \mathbf{V} \mathbf{\Lambda} \mathbf{V}^{-1}$, where $\mathbf{\Lambda}$ is a diagonal matrix containing the eigenvalues of $\mathbf{S}_2$ and $\mathbf{V}$ is an invertible matrix with columns being the corresponding eigenvectors. Accordingly, we have $\lim_{n\to\infty} (\mathbf{S}_2)^{n-2} =\lim_{n\to\infty} \mathbf{V}\mathbf{\Lambda}^{n-2}\mathbf{V}^{-1}$. Recall that all the eigenvalues are smaller than 1 (and greater than 0) except the first one $\lambda_1=1$, so $\lim_{n\to\infty} \mathbf{\Lambda}^{n-2}$ is a matrix whose entries are all zero except the first-row-first-column entry, which is one. On the other hand, the derivatives of $\mathbf{B}_0(u,v)$ are not bounded around $(0,0)$. We can see this by applying the chain rule, for example,
$$
\pd{\mathbf{B}_0(u,v)}{u} = 2^n \left( \mathbf{T}_k \bar{\mathbf{S}}_{3} \left(\mathbf{S}_2\right)^{n-2} \mathbf{S}_1 \right)^{T} \pd{\mathbf{b}_{3,k}(\xi,\eta)}{\xi},
$$
where the factor comes from $d\xi/d u = 2^n$. A differentiable version of $\mathbf{B}_0$ (with respect to certain parameters) can be obtained via characteristic-map-based reparameterization \cite{ref:boiermartin04}. However, in this paper, we are interested in applying tHNUS basis functions in the context of IGA, so we only need derivatives at quadrature points that are away from $(0,0)$. Moreover, what we eventually need is derivatives with respect to the physical coordinates, for example,
$$\pd{\mathbf{B}_0 \circ s^{-1}(x,y)}{x} = \pd{\mathbf{B}_0}{u} \pd{u}{x} + \pd{\mathbf{B}_0}{v} \pd{v}{x},$$
where $s^{-1}$ is the inverse mapping of $s(u,v)$. The troublesome factor $2^n$, which may cause overflow when $n$ becomes too large, is canceled out with that from $\partial u/\partial x$ (which is $2^{-n}$) and does not cause any numerical issues. The same argument applies to higher order derivatives.
\end{myremark}

\begin{myremark}
In \cite{ref:stam98}, the eigen structure $(\mathbf{\Lambda},\mathbf{V})$ is precomputed for different valence numbers and stored in a file for repeated use. However, the same scheme cannot be applied to tHNUS because the subdivision matrix $\mathbf{S}_2$ depends on not only the valence number but also the surrounding knot intervals, leading to infinite possible cases of $\mathbf{S}_2$. Therefore, the eigen structure of $\mathbf{S}_2$ needs to be found in real time for every irregular element. Alternatively, we can directly perform matrix multiplications to compute $(\mathbf{S}_2)^{n-2}$, especially when the valence number is small and basis functions need to be computed at points other than $(0,0)$. This is indeed the case in IGA where evaluation is needed at quadrature points. In practice, we adopt a near-machine-precision tolerance (e.g., $10^{-13}$) to prevent a potential overflow issue.
\end{myremark}



\begin{myremark}
In the previous discussion, $\mathbf{B}_0$ is derived under the assumption that there is only one polygonal face next to an irregular element, which, however, is not a necessary condition. When an irregular element has multiple adjacent polygonal faces, we treat it as a macro element and pseudo-subdivide it once. Each of the resulting four subelements only has one polygonal face, where basis functions are defined according to our previous discussion. In other words, basis functions are well defined on each quarter of the original macro element. This extension follows the same idea proposed in \cite{ref:wei15b}, which extends Stam's derivation \cite{ref:stam98} to arbitrary unstructured quadrilateral meshes.
\end{myremark}


\subsection{Quadrature}

To apply the standard Gauss quadrature rule, we need to guarantee that the involved basis functions are polynomials (rather than piecewise polynomials) on each integration cell. However, the functions in $\mathbf{B}_0$ are piecewise smooth polynomials defined on an infinite series of subdomains, i.e., $\{\Omega_k^n\}_{n=1}^{\infty}$ ($k=1,2,3$). The straightforward way is to apply the Gauss quadrature rule on each cell $\Omega_k^n$ up to a certain fine level, which was adopted in several subdivision-based isogeometric methods \cite{ref:tnguyen14, ref:wei15b}. We call such a quadrature \emph{the full quadrature scheme}. In our patch test, we observe that the solution achieves machine precision ($\sim 10^{-16}$) when the 4-point rule is used and the level $n$ is set to be $10$. As a result, a total number of 496 quadrature points are needed for a single irregular element. In contrast, only 16 Gauss quadrature points are used for a regular element.

Alternatively, we can ``brutally" apply the Gauss quadrature rule to the entire irregular element. In other words, only 16 Gauss quadrature points are placed on an irregular element. We observe that such a \emph{reduced quadrature scheme} does not influence convergence. In fact, it does not introduce noticeable numerical error in terms of the $L^2$- or $H^1$-norm error compared to the full quadrature. We will numerically compare the two schemes in the next section.

\subsection{Properties}

Now we briefly discuss several properties of tHNUS basis functions, including non-negative partition of unity, refinability (equivalent to nested spline spaces), and global linear independence. The non-negative partition of unity of tHNUS basis functions follows from the fact that all the entries in the subdivision matrix are non-negative and each row sum of the subdivision matrix is one. Refinability states that each basis function of a given mesh can be represented as a linear combination of those defined on a refined mesh. In fact, we can see this property in the derivation of $\mathbf{B}_0$, where basis functions are always expressed as linear combinations of functions in the refined meshes.

Finally, the global linear independence implies linear independence on the entire domain, and it can be easily shown under the mild assumption that each irregular element has at least one regular element as its direct neighbor. Under this assumption, every basis function has support on a certain regular element, where it is simply a B-spline. As B-splines are linearly independent on such an element, we can conclude that all the basis functions are linearly independent on the entire domain by going through all the regular elements. The proof on general meshes becomes more involving because we need to resolve different configurations of polygonal faces, or equivalently, configurations of extraordinary vertices in the input mesh. A complex configuration usually occurs when the mesh is very coarse such that many extraordinary vertices may be next to one another. When this is the case, we can perform global refinement to guarantee linear independence.


\section{Numerical examples}
\label{sec:result}

In this section, we present several numerical examples using tHNUS surfaces in both geometric modeling and IGA.


\subsection{Geometric modeling with tHNUS surfaces}
We show some tHNUS limit surface examples and compare them with the existing non-uniform subdivision schemes.
We first show the graphs of blending functions for the extraordinary points (EPs) with different valences, such as valence-5 EP in
Figure~\ref{fig:val5g}, valence-6 EP in Figure~\ref{fig:val6g} and valence-7 EP in Figure~\ref{fig:val7g}.
As stated in~\cite{xin18}, the approaches in \cite{Sederberg98}, \cite{ref:cashman09} and \cite{kovacs2015dyadic}
produce limit surfaces with very similar quality in all the examples. Therefore, we only show the limit surface
comparisons in one example as shown in Figure~\ref{fig:val5g}. All the rest of the examples only show the limit surface of
the new tHNUS with different $\lambda$. We can observe that all different $\lambda$ can produce better shape quality
than those approaches in \cite{Sederberg98}, \cite{ref:cashman09} and \cite{kovacs2015dyadic}, but the small $\lambda$
produces worse shape quality surround the EPs, see Figures~\ref{fig:val6z} and \ref{fig:her} for the
details.
\begin{figure}[htbp]
\begin{center}
\begin{tabular}{ccc}
\includegraphics[height=0.18\textwidth]{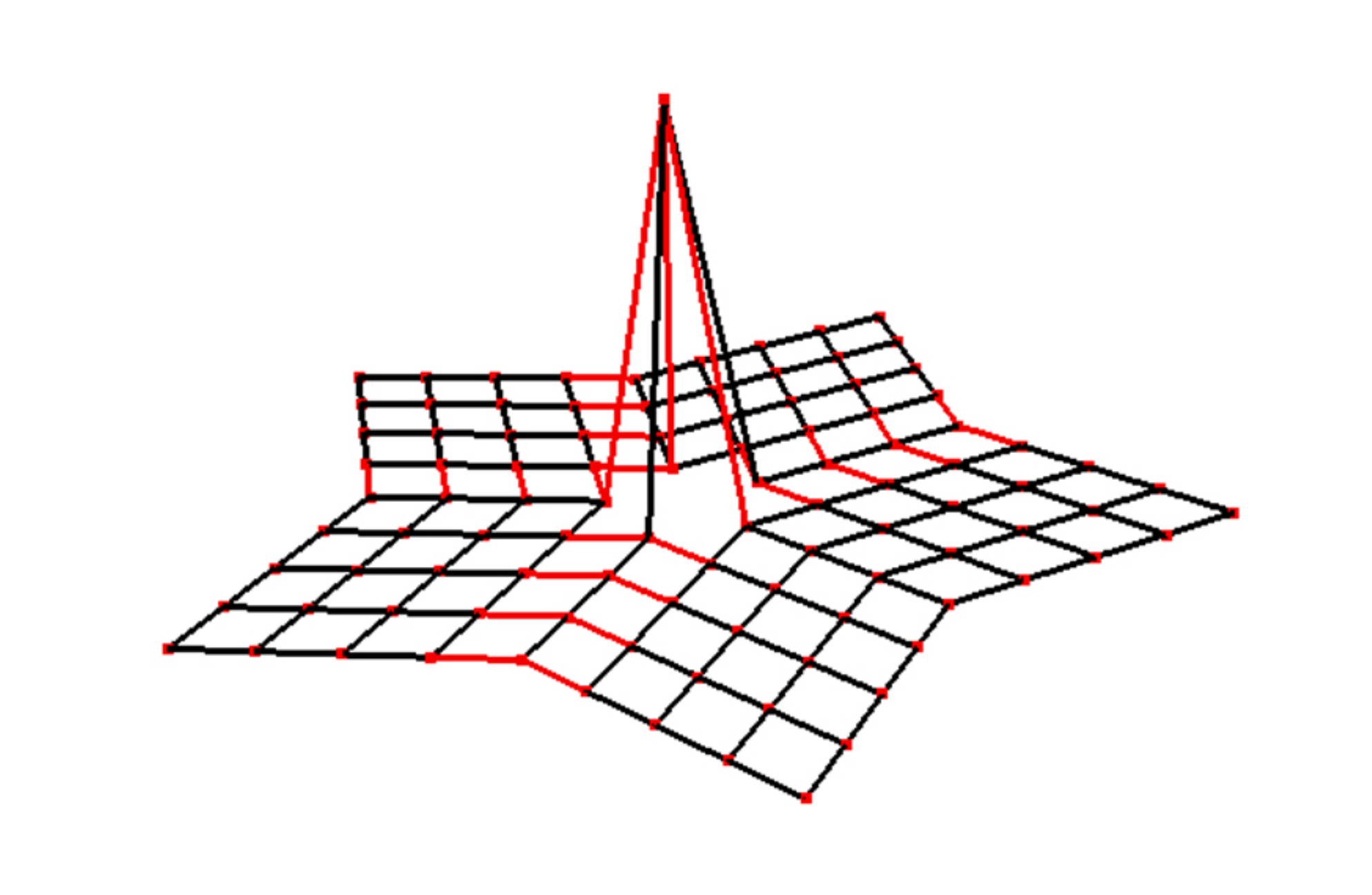}&
\includegraphics[height=0.18\textwidth]{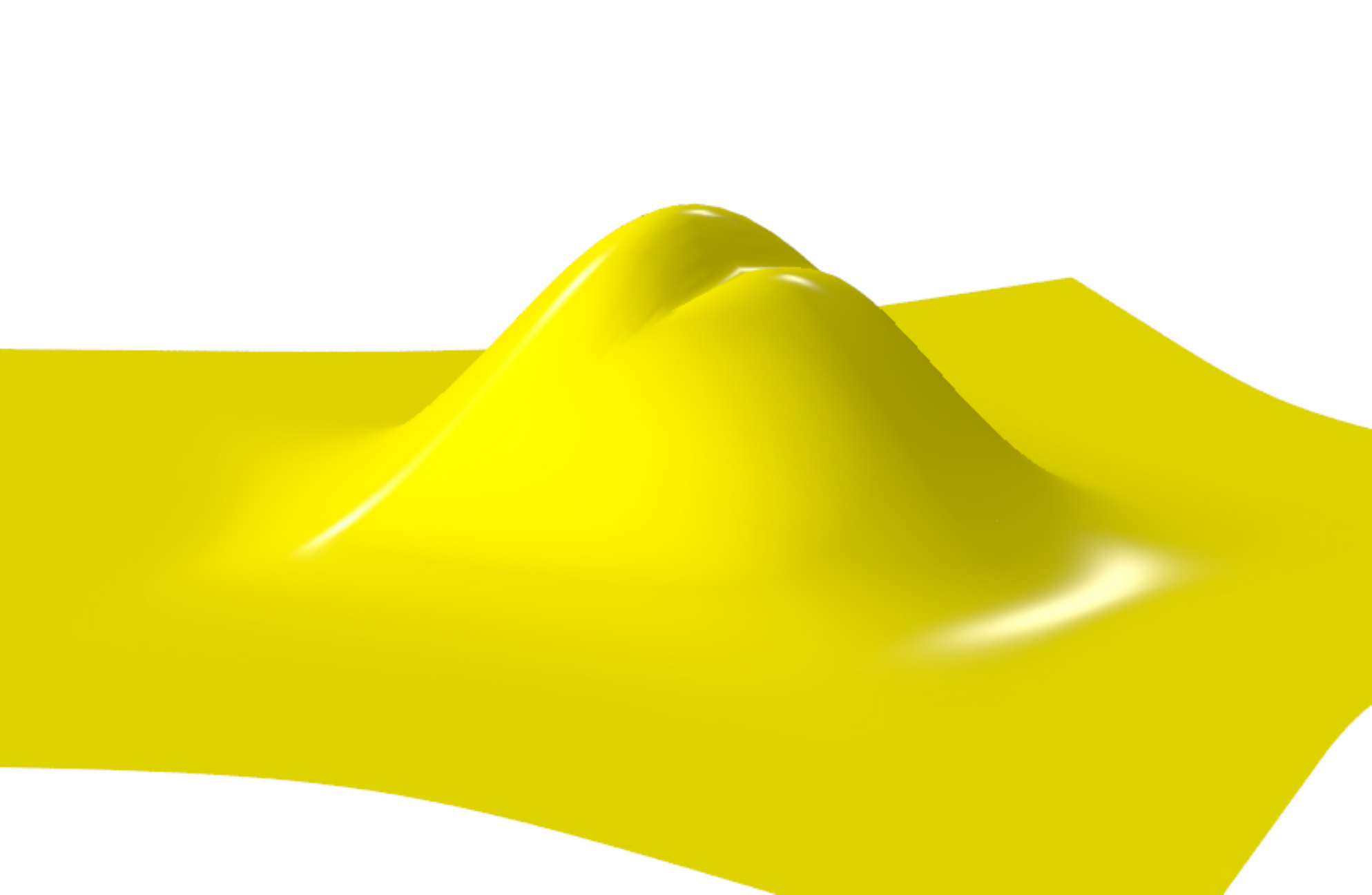}&
\includegraphics[height=0.18\textwidth]{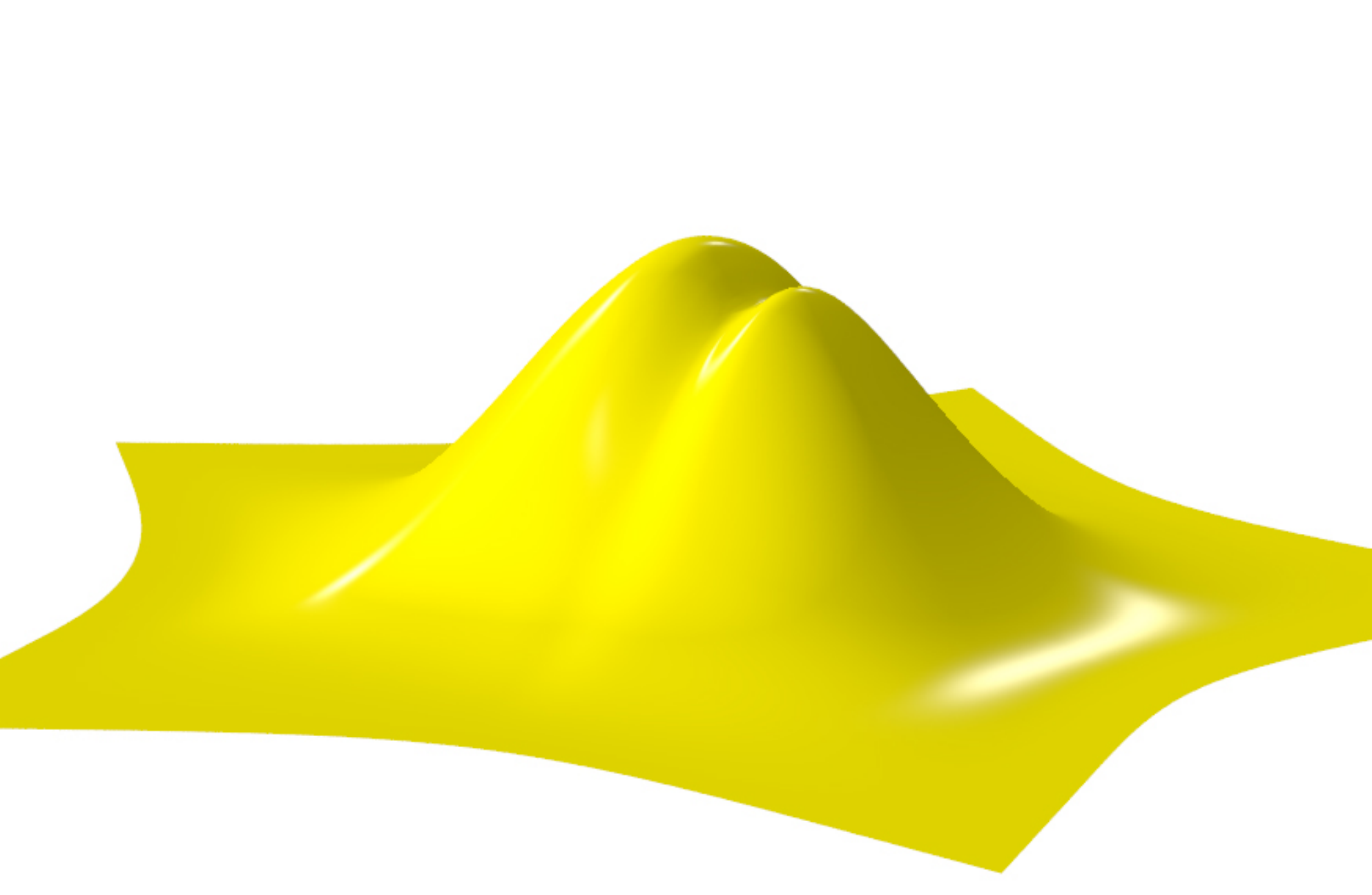}\\
(a) The control grid & (b) Result produced by~\cite{Sederberg98} & (c) Result produced by~\cite{ref:cashman09} \\
\includegraphics[height=0.18\textwidth]{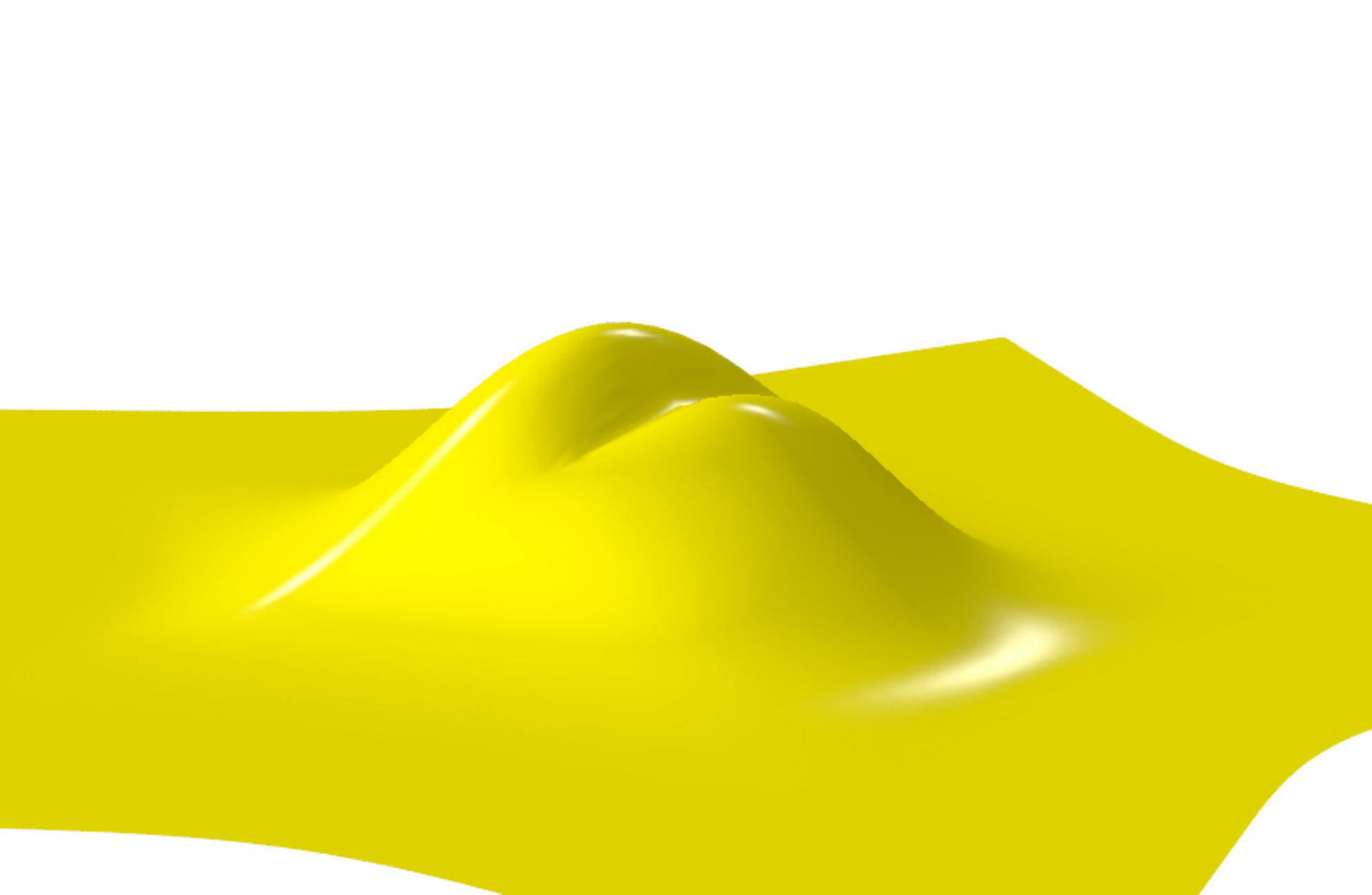}&
\includegraphics[height=0.18\textwidth]{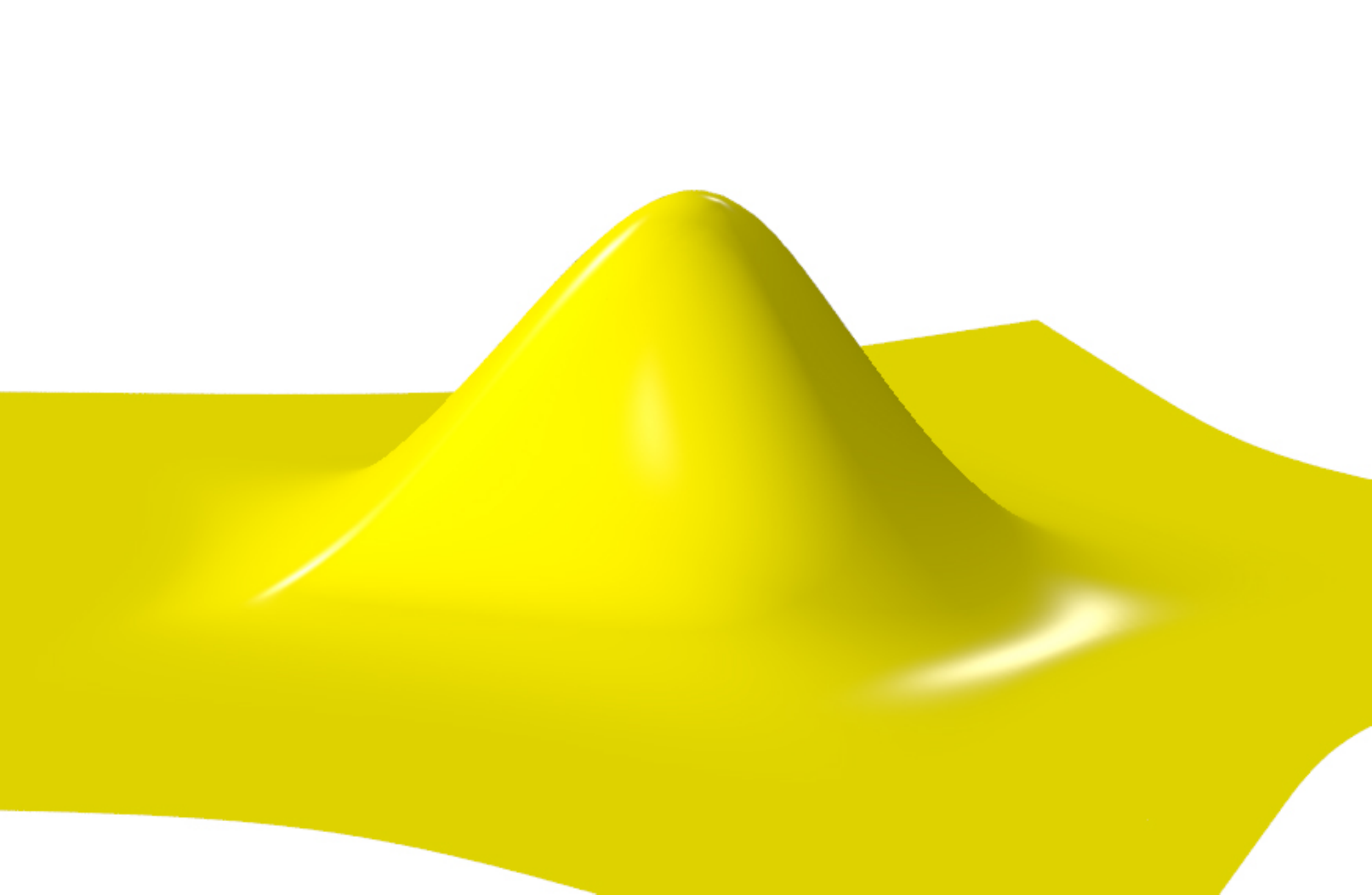}&
\includegraphics[height=0.18\textwidth]{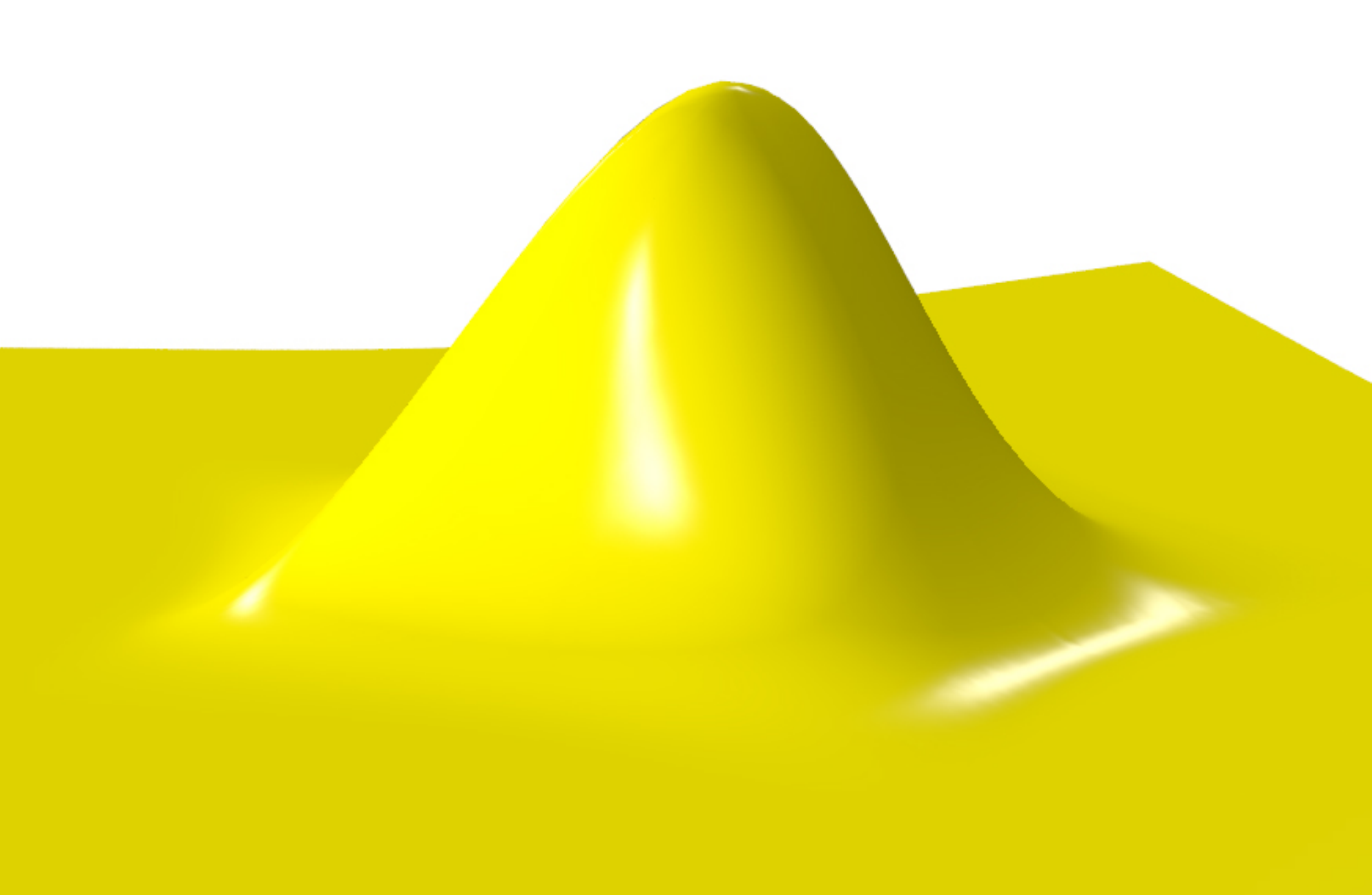}\\
(d) Result produced by~\cite{kovacs2015dyadic} & (e) Result produced by~\cite{xin16} & (f) Result produced by~\cite{xin18}\\
\includegraphics[width=0.24\textwidth]{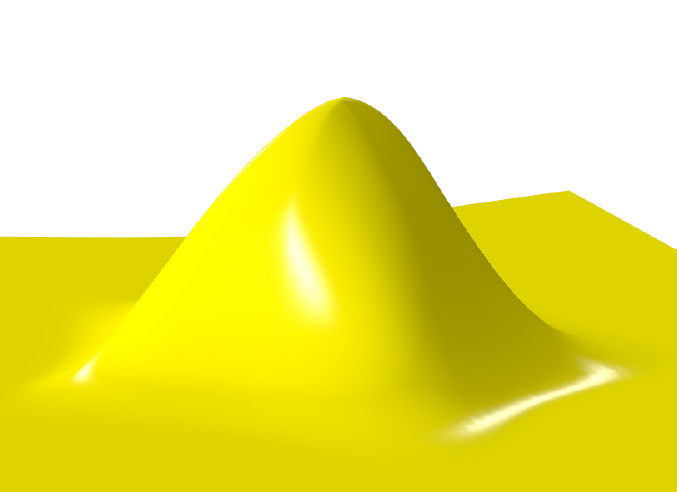}&
\includegraphics[width=0.24\textwidth]{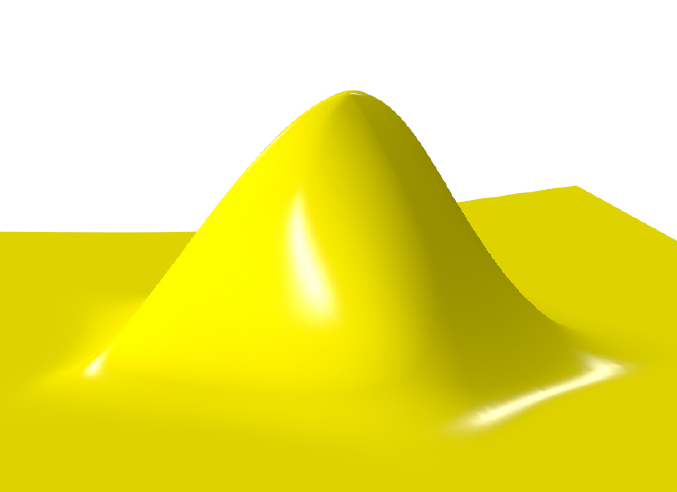}&
\includegraphics[width=0.24\textwidth]{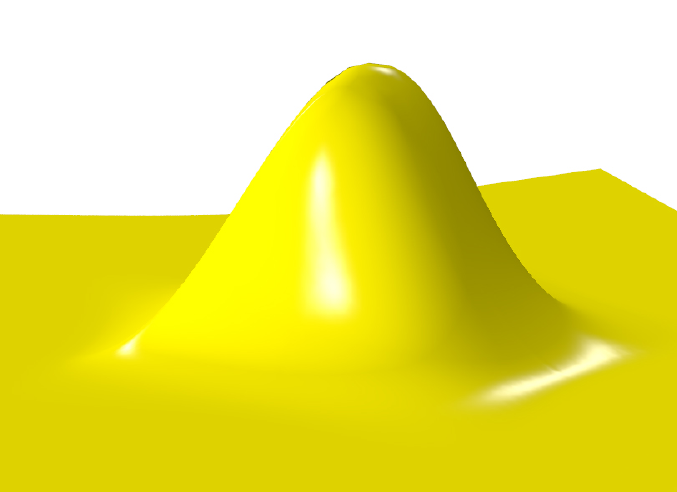}\\
(g) tHNUS with $\lambda = 0.26$ & (h) tHNUS with $\lambda = 0.35$ & (i) tHNUS with $\lambda = 0.65$
\end{tabular}
\end{center}
\caption{The blending function for a valence-5 non-uniform EP using different approaches, where the knot intervals of
the red edges are 10 and those of the other edges are 1.}
\label{fig:val5g}
\end{figure}

\begin{figure}[htbp]
\begin{center}
\begin{tabular}{ccc}
\includegraphics[width=0.24\textwidth]{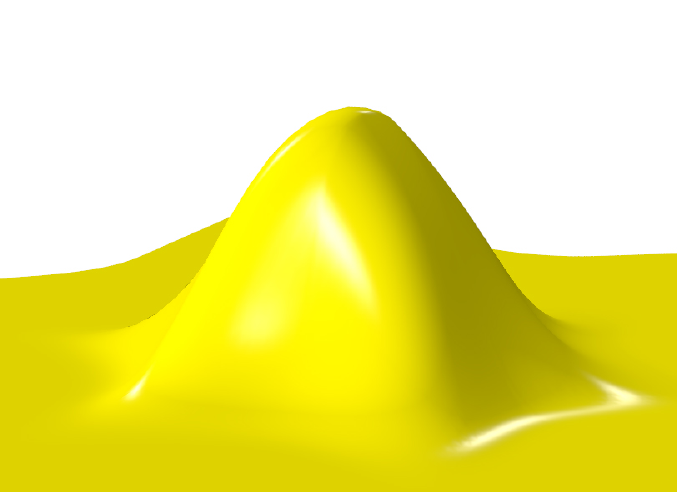}&
\includegraphics[width=0.24\textwidth]{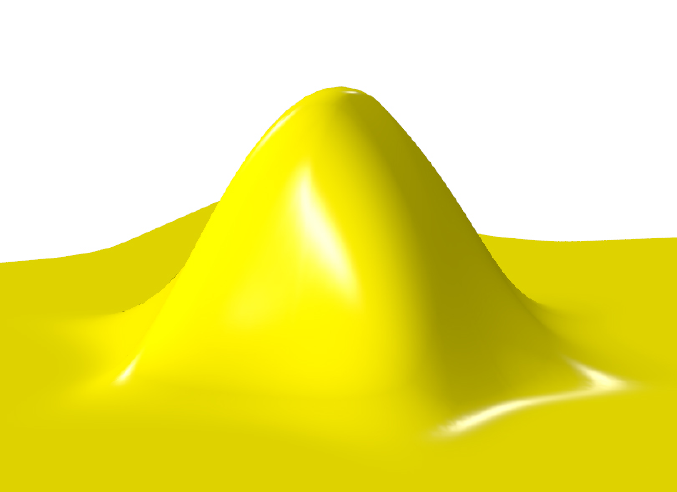}&
\includegraphics[width=0.24\textwidth]{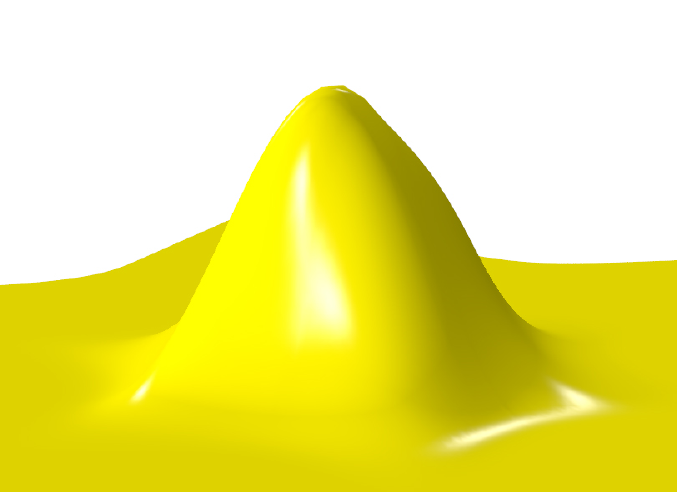}\\
(a) $\lambda = 0.26$ & (b) $\lambda = 0.35$ & (c) $\lambda = 0.65$
\end{tabular}
\end{center}
\caption{The blending function for a valence-6 non-uniform EP using different $\lambda$.}
\label{fig:val6g}
\end{figure}

\begin{figure}[htbp]
\begin{center}
\begin{tabular}{ccc}
\includegraphics[width=0.24\textwidth]{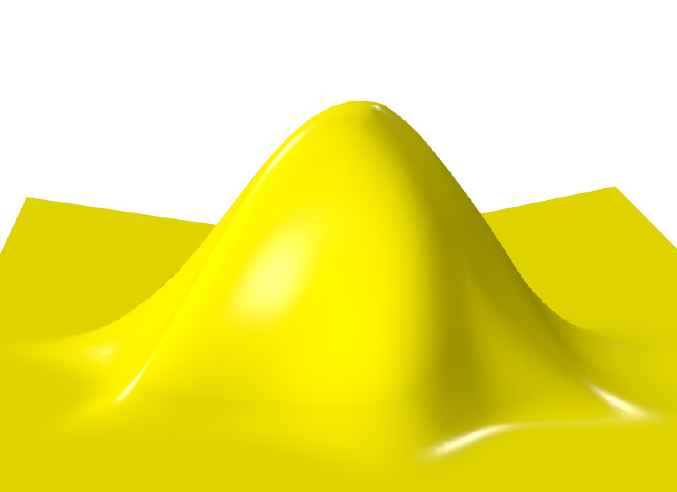}&
\includegraphics[width=0.24\textwidth]{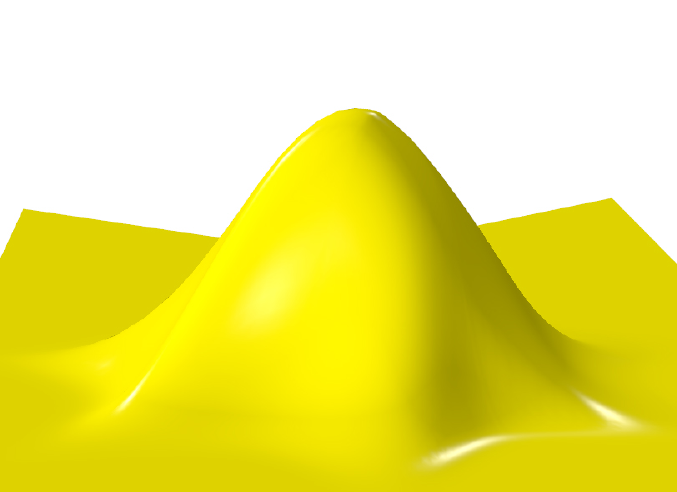}&
\includegraphics[width=0.24\textwidth]{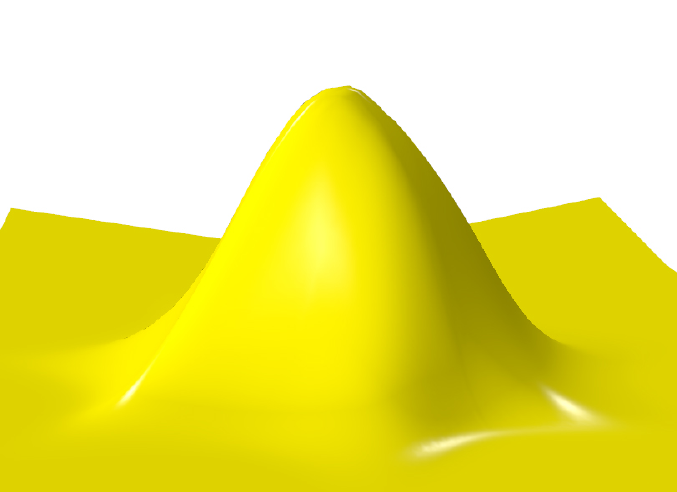}\\
(a) $\lambda = 0.26$ & (b) $\lambda = 0.35$ & (c) $\lambda = 0.65$
\end{tabular}
\end{center}
\caption{The blending function for a valence-7 non-uniform EP using different $\lambda$.}
\label{fig:val7g}
\end{figure}

\begin{figure}[htbp]
\begin{center}
\begin{tabular}{ccc}
\includegraphics[width=0.24\textwidth]{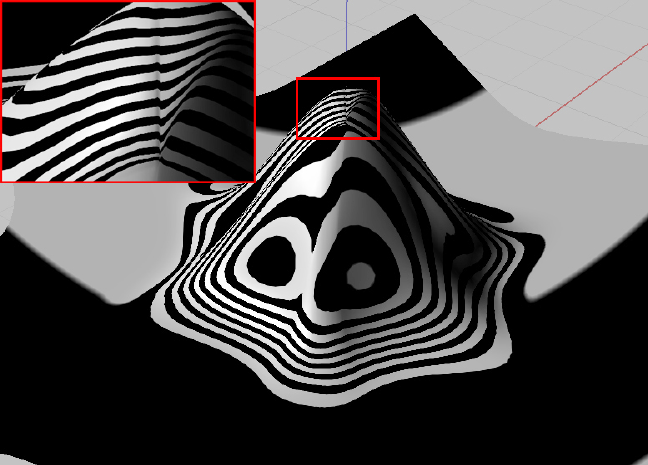}&
\includegraphics[width=0.24\textwidth]{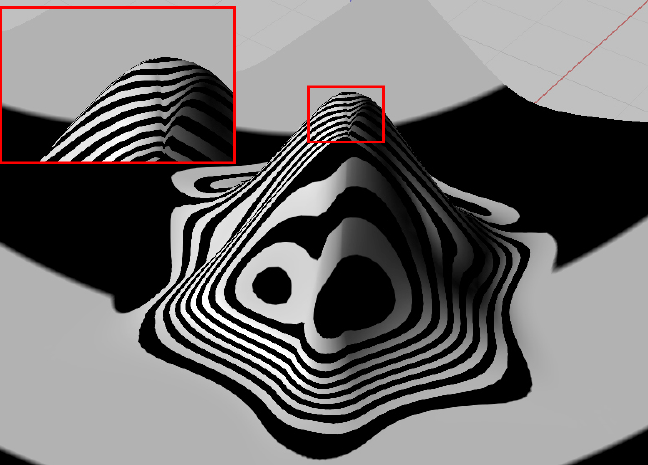}&
\includegraphics[width=0.24\textwidth]{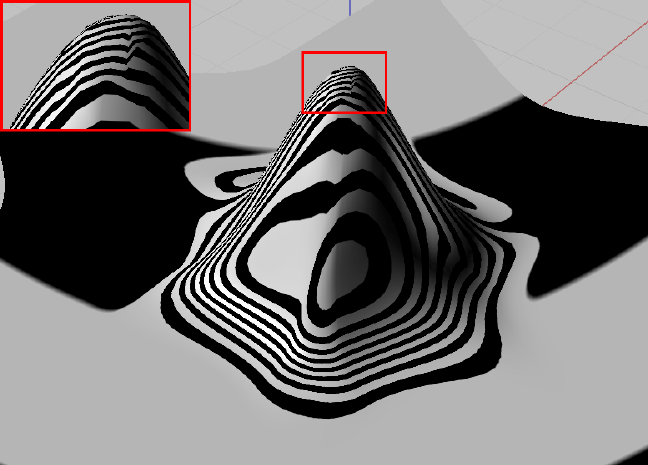}\\
(a) $\lambda = 0.26$ & (b) $\lambda = 0.35$ & (c) $\lambda = 0.65$
\end{tabular}
\end{center}
\caption{The blending function for a valence-6 non-uniform EP using different $\lambda$, where larger $\lambda$ produces more satisfactory reflection lines.}
\label{fig:val6z}
\end{figure}

\begin{figure}[htbp]
\begin{center}
\begin{tabular}{ccc}
\includegraphics[width=0.24\textwidth]{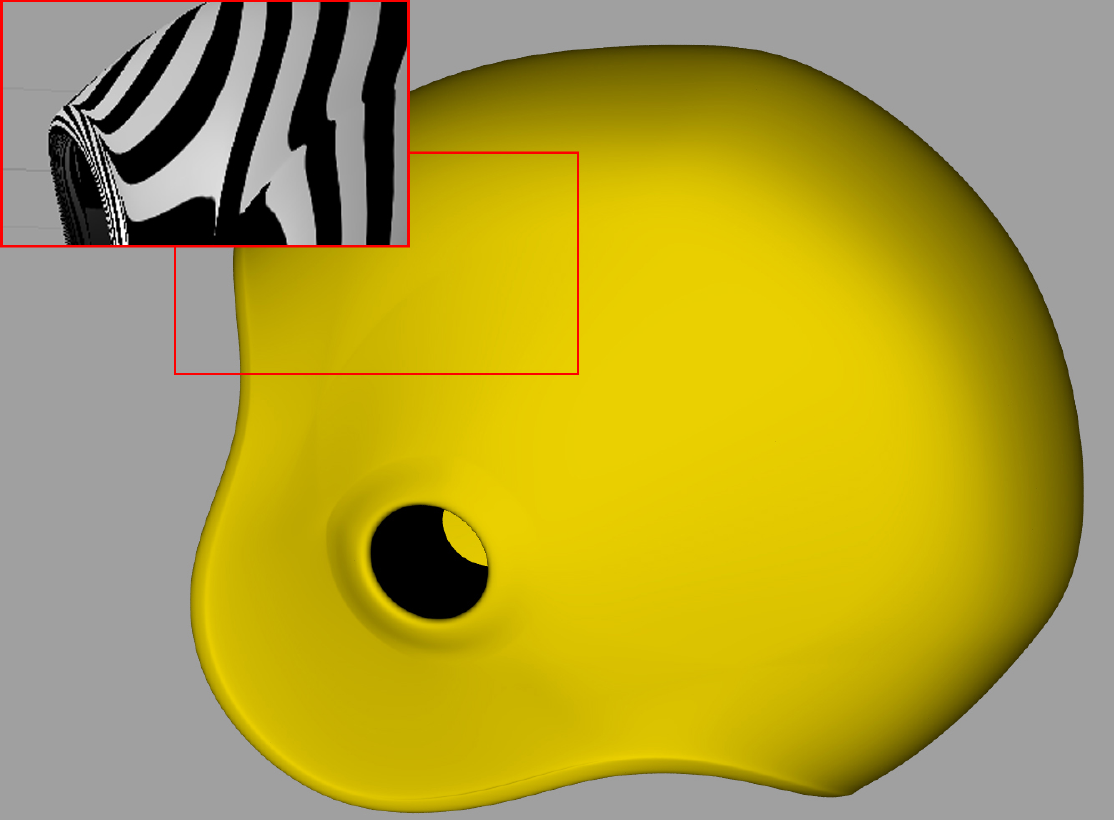}&
\includegraphics[width=0.24\textwidth]{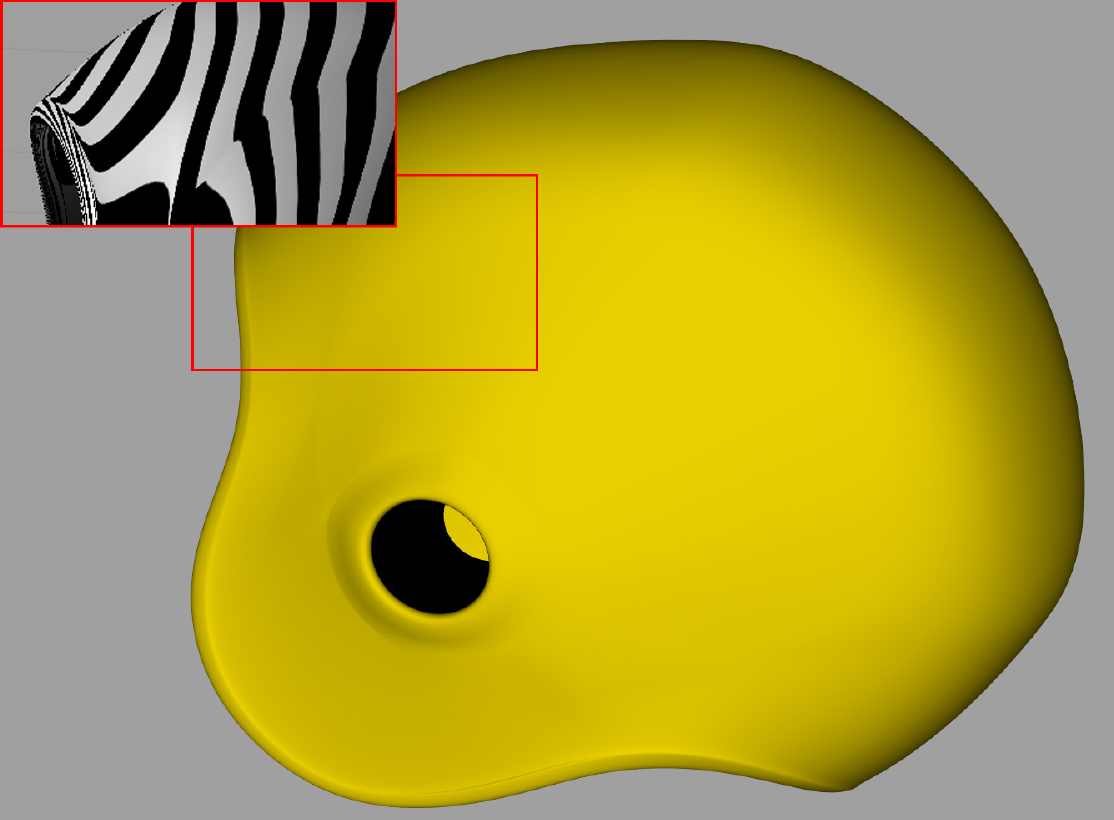}&
\includegraphics[width=0.24\textwidth]{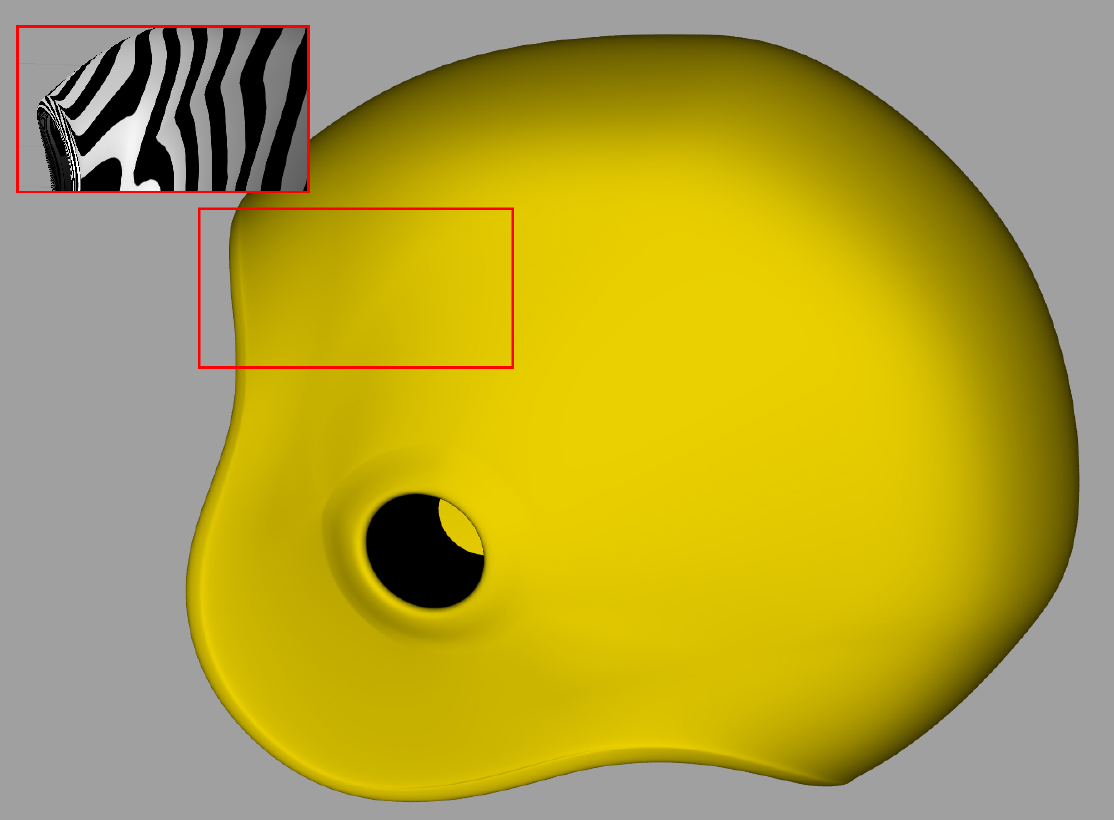}\\
(a) $\lambda = 0.26$ & (b) $\lambda = 0.35$ & (c) $\lambda = 0.65$
\end{tabular}
\end{center}
\caption{A comparison of different $\lambda$ applied to the helmet model. The artifact for the reflection lines exists for $\lambda = 0.26$.}
\label{fig:her}
\end{figure}

\subsection{IGA applications using tHNUS basis functions}

In this section, we test the performance of tHNUS basis functions in the context of IGA. We solve the Poisson's equation with several unstructured quadrilateral meshes as the input. We start with convergence tests on a unit square, whose input control mesh has two EPs, one of valence 3 and the other of valence 5; see Figure \ref{fig:meshterm}(a). These tests are aimed at studying: (1) the role of $\lambda$ in convergence, (2) the feasibility of using reduced quadrature, and (3) the influence of non-uniform parameterizations on convergence.

\begin{figure}[htb]
\centering
\begin{tabular}{cc}
\includegraphics[width=0.48\textwidth]{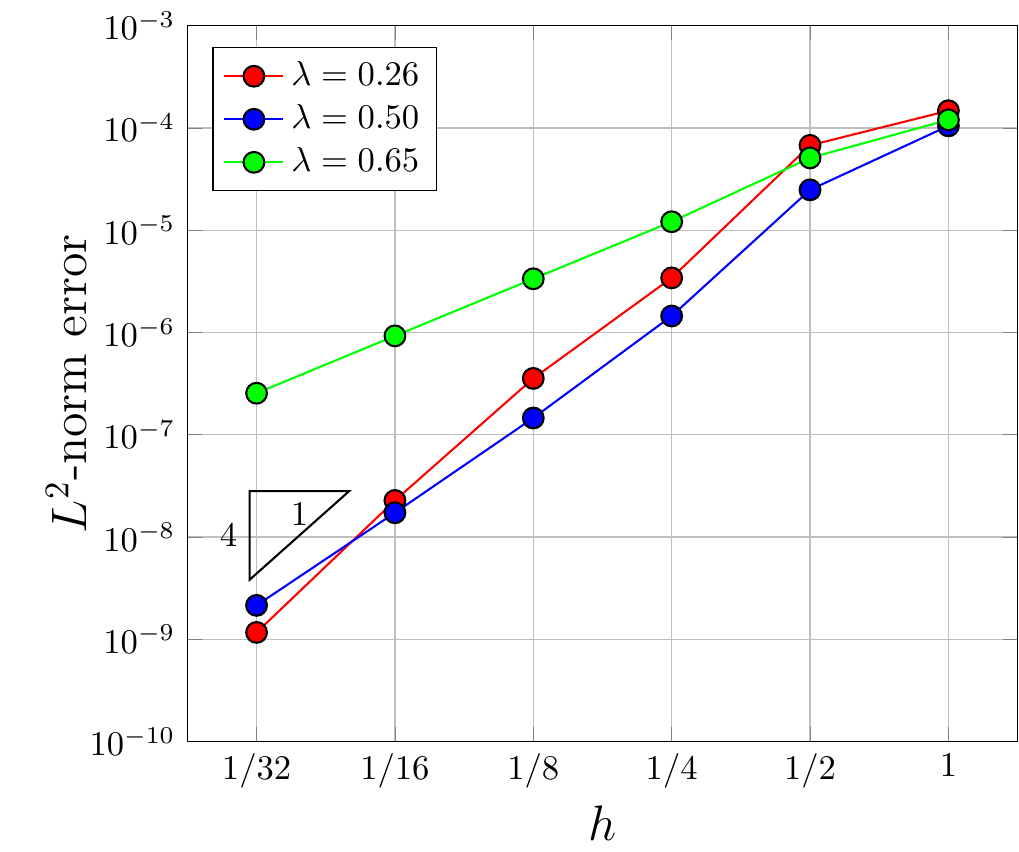}&
\includegraphics[width=0.48\textwidth]{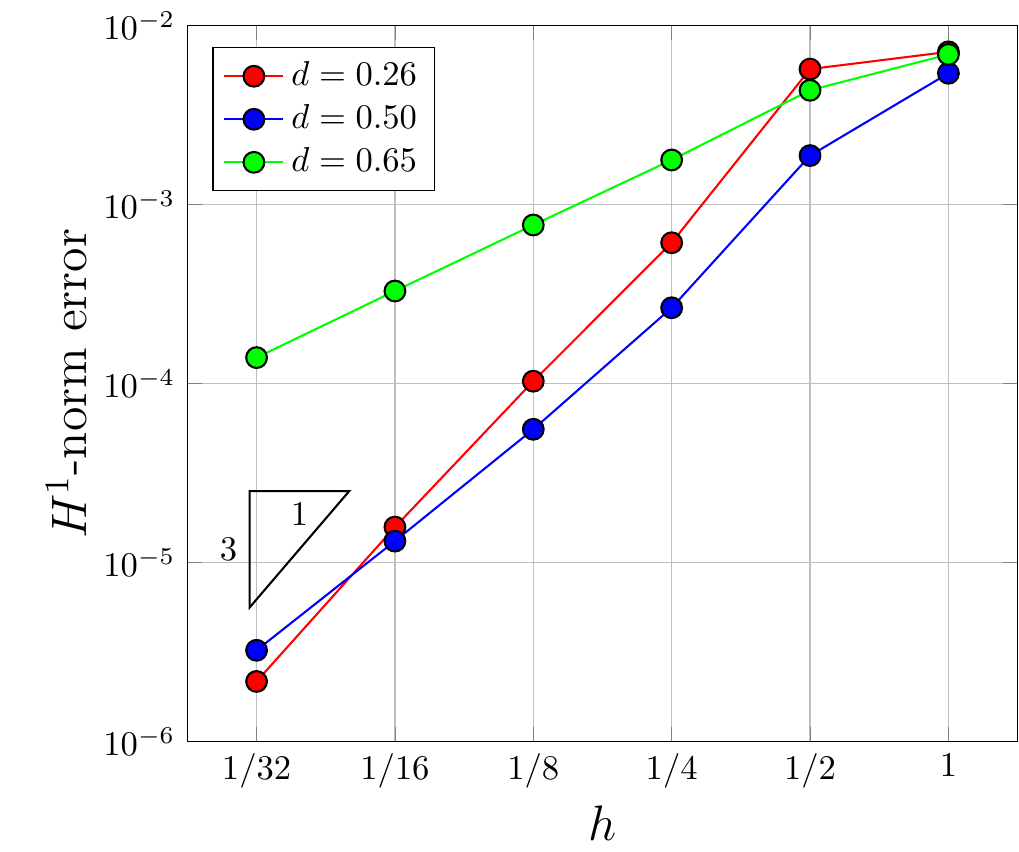}\\
(a) & (b)\\
\end{tabular}
\caption{Convergence plots using $\lambda=\{0.65,0.5,0.26\}$. Particularly, $\lambda=0.5$ corresponds to the original HNUS whereas $\lambda=0.26$ recovers optimal convergence rates.}
\label{fig:lambda}
\end{figure}

First, we study the influence of the tuning parameter $\lambda$ on convergence behavior, where we choose $\lambda$ to be $0.65$, $0.5$, and $0.26$. Recall that tHNUS is equivalent to the original HNUS when $\lambda=0.5$. We adopt uniform parameterization (i.e., same knot intervals) around EPs as well as full quadrature in this study. With the manufactured solution $u(x,y)=\sin(\pi x) \sin(\pi y)$, we summarize the convergence plots in Figure \ref{fig:lambda}. We observe that a smaller $\lambda$ delivers a better convergence behavior, and particularly, optimal convergence rates are achieved when $\lambda=0.26$. The tuned Catmull-Clark subdivision (with uniform parameterization) was studied in \cite{ref:ma19}, where optimal convergence rates in the $L^2$-norm were observed in the Poisson's problem when $\lambda=0.39$. It indicates that the tuning parameter in tHNUS plays a less sensitive role than that in \cite{ref:ma19} because tHNUS requires a smaller $\lambda$ to recover optimal convergence. The reason may be that $\lambda$ brings more vertices to move further towards each EP than in tHNUS. As a result, the tuned Catmull-Clark subdivision has a faster shrinkage in irregular regions. We will provide insights about why reducing $\lambda$ recovers optimal convergence later when we study the meshes with high-valence EPs.



\begin{figure}[htb]
\centering
\begin{tabular}{cc}
\includegraphics[width=0.48\textwidth]{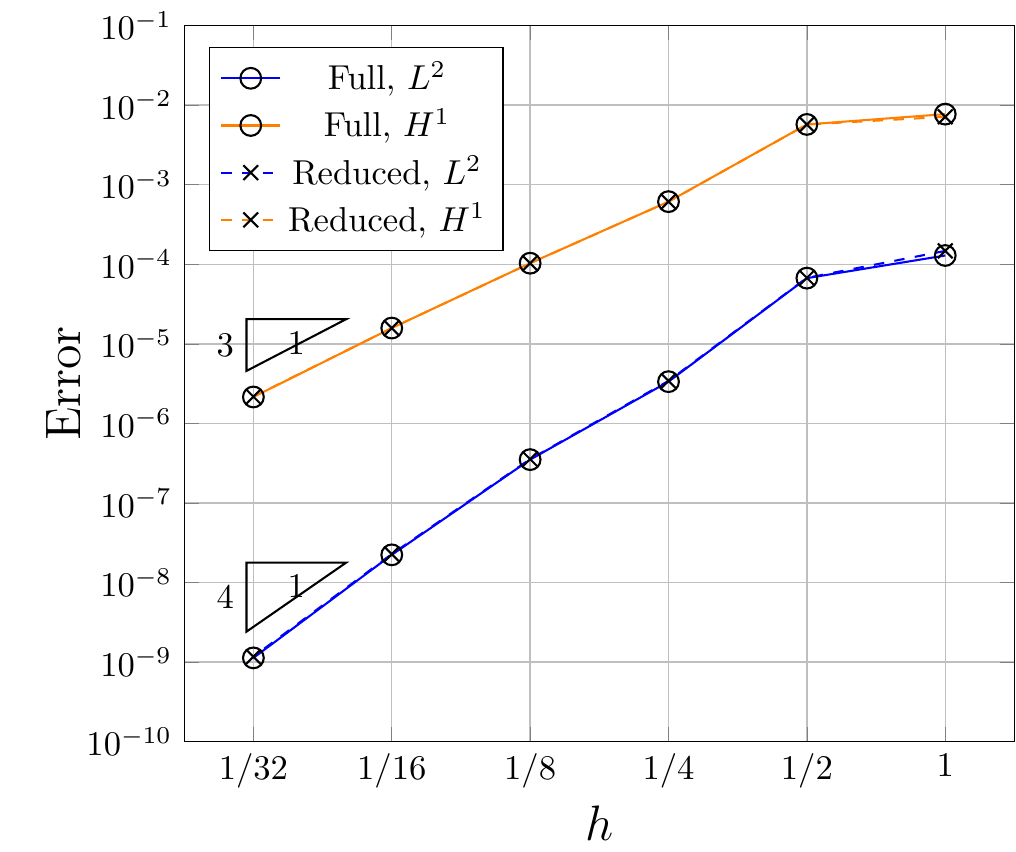}&
\includegraphics[width=0.48\textwidth]{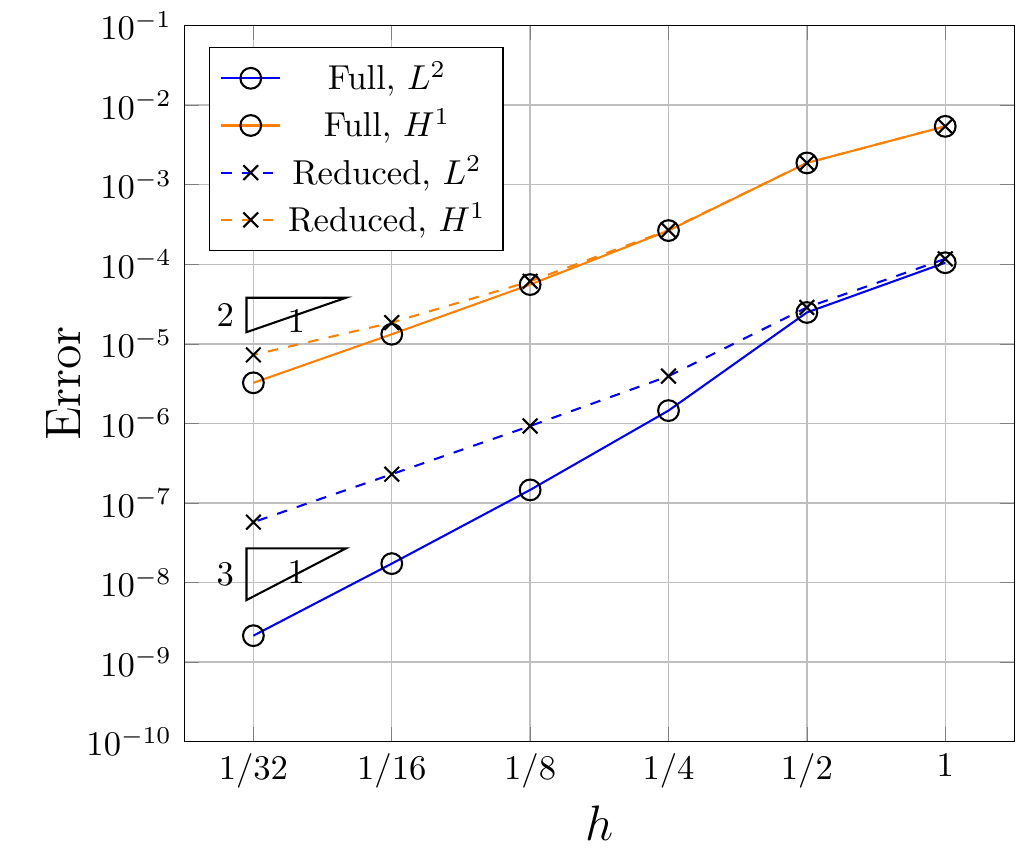}\\
(a) $\lambda=0.26$ & (b) $\lambda=0.5$\\
\end{tabular}
\caption{Convergence plots using full and reduced quadrature.}
\label{fig:quad}
\end{figure}

Second, we compare two quadrature schemes in irregular elements, full quadrature versus the reduced quadrature, under uniform parameterization with $\lambda=0.26$ and $\lambda=0.5$. We observe in Figure \ref{fig:quad}(a) that there is no noticeable difference in terms of both $L^2$- and $H^1$-norm errors. In other words, both quadrature schemes deliver the same level of accuracy when $\lambda=0.26$. In contrast, quadrature plays an important role when $\lambda=0.5$, where the full quadrature yields nearly one-order higher convergence rates than the reduced quadrature. This indicates that when $\lambda=0.26$, the corresponding basis functions (piecewise polynomials) in irregular elements can be better approximated by polynomials than those using $\lambda=0.5$, and 16 quadrature points seem to suffice to retain accuracy. However, further study is needed to fully understand the mechanism behind.



\begin{figure}[htb]
\centering
\begin{tabular}{cc}
\includegraphics[width=0.35\textwidth]{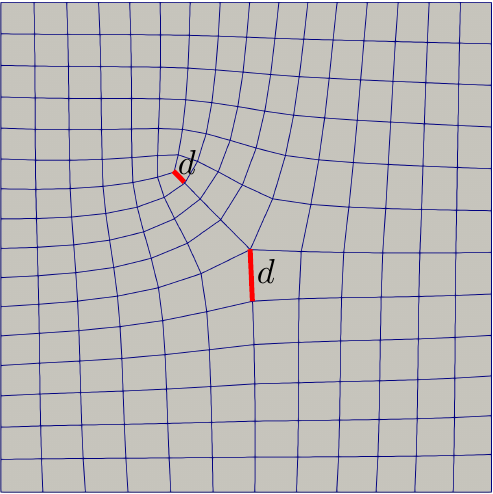} &
\includegraphics[width=0.35\textwidth]{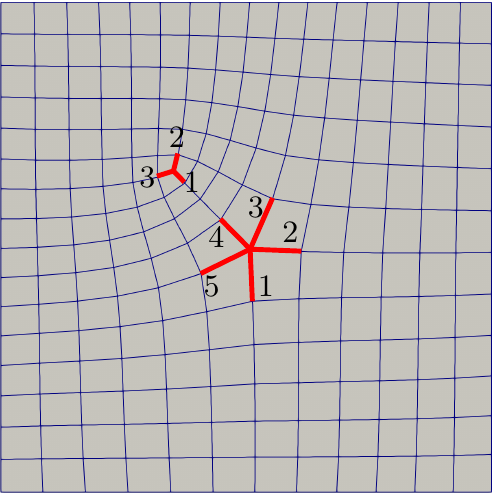}\\
(a) & (b)\\
\includegraphics[width=0.45\textwidth]{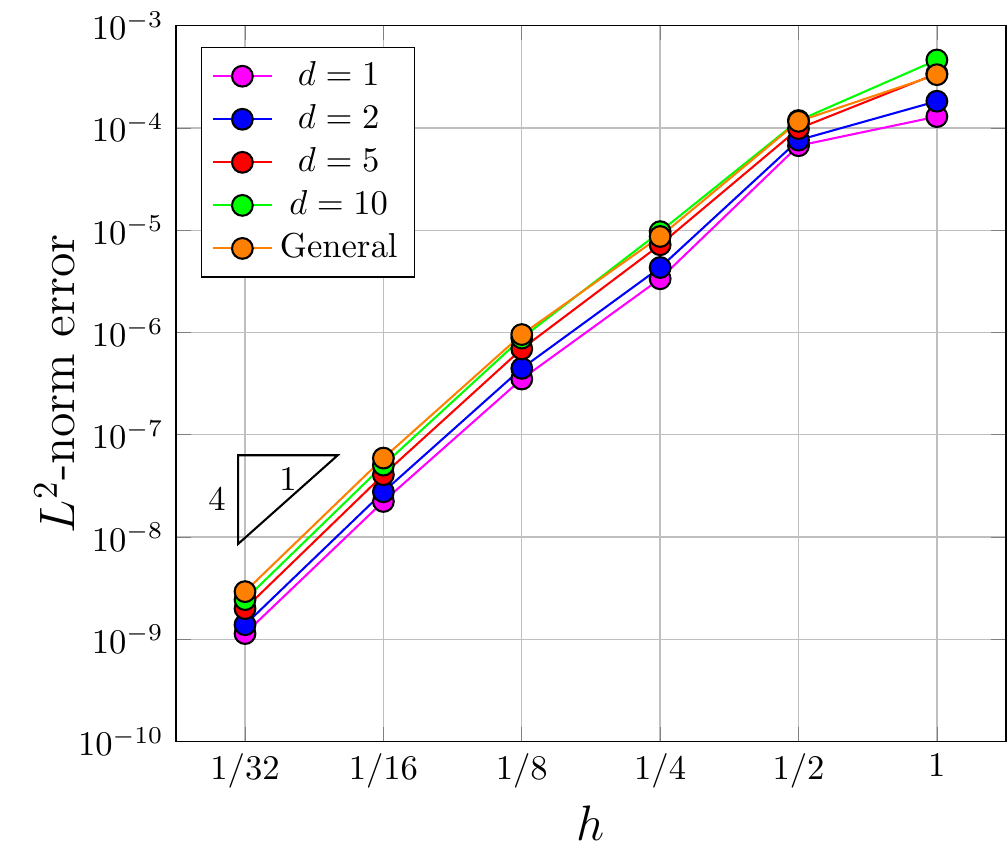} &
\includegraphics[width=0.45\textwidth]{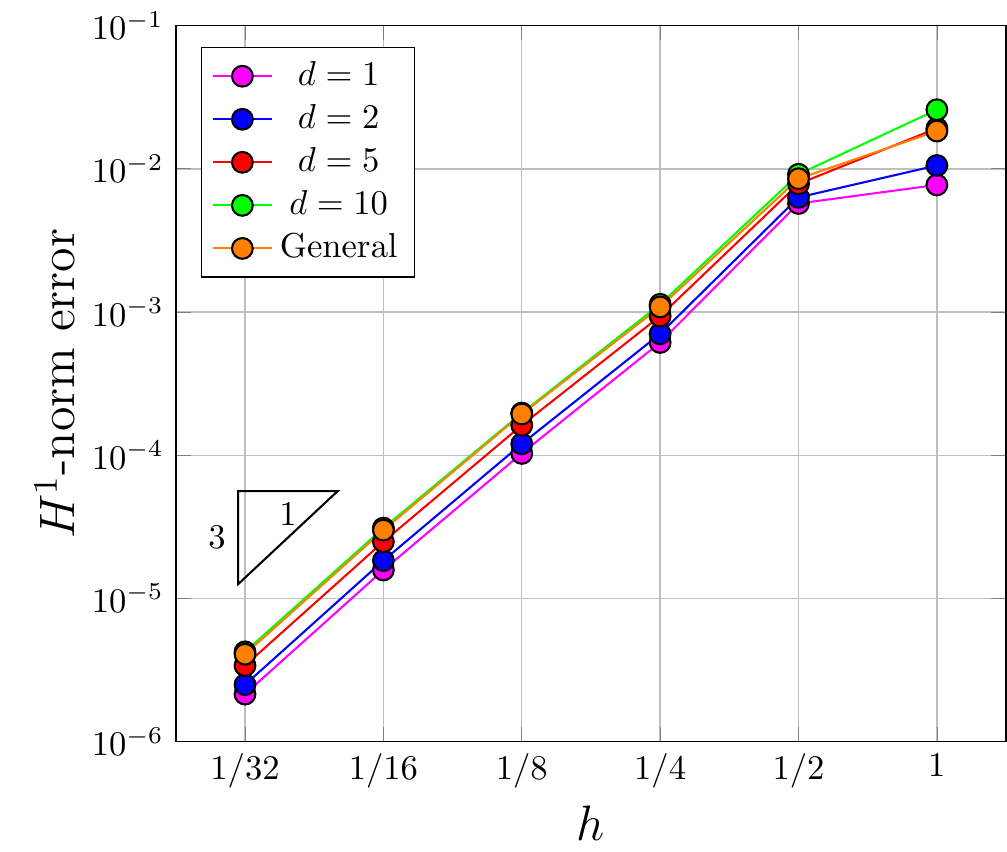}\\
(c) & (d)\\
\end{tabular}
\caption{Convergence plots under different non-uniform parameterizations. (a, b) The configurations of knot intervals around EPs, and (c, d) convergence plots in $L^2$- and $H^1$-norm errors.}
\label{fig:square_non_unif}
\end{figure}

Third, we study several different non-uniform parameterizations for convergence test, which can be obtained by assigning different knot intervals to the edges in the input control mesh. Semi-uniform knot intervals are usually adopted in the literature, where all the edges are assigned a unit knot interval except for those perpendicular to the boundary, which are assigned a zero knot interval. To have non-uniform parameterization around EPs, we modify the semi-uniform setting in two ways: (1) the knot interval (denoted by $d$) of highlighted spoke edges takes values $d\in\{1,2,5,10\}$; and (2) every spoke edge is assigned a different knot interval; see Figure \ref{fig:square_non_unif}(a, b). In both cases, we observe in Figure \ref{fig:square_non_unif}(c, d) that tHNUS basis functions can achieve optimal convergence rates with $\lambda=0.26$. We also observe that the convergence plots corresponding to a larger $d$ slightly shift up, meaning that larger difference in knot intervals yields larger approximation error. In other words, the ``distortion" in parameterization influences accuracy rather than convergence.

\begin{figure}[htb]
\centering
\begin{tabular}{ccc}
\includegraphics[width=0.3\textwidth]{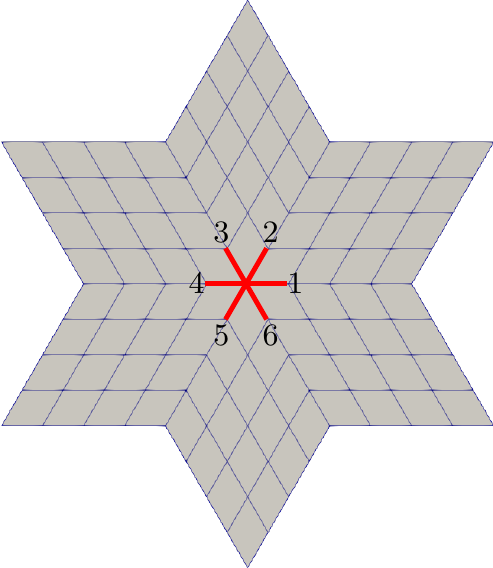} &
\includegraphics[width=0.3\textwidth]{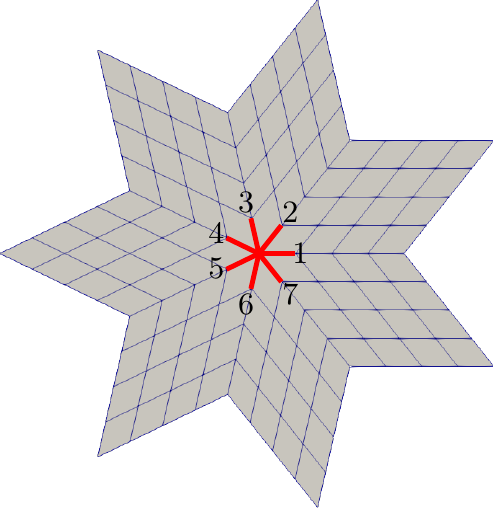} &
\includegraphics[width=0.3\textwidth]{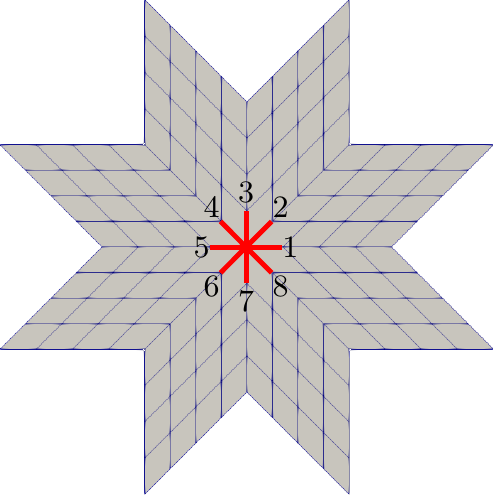}\\
(a) & (b) & (c)\\
\includegraphics[width=0.32\textwidth]{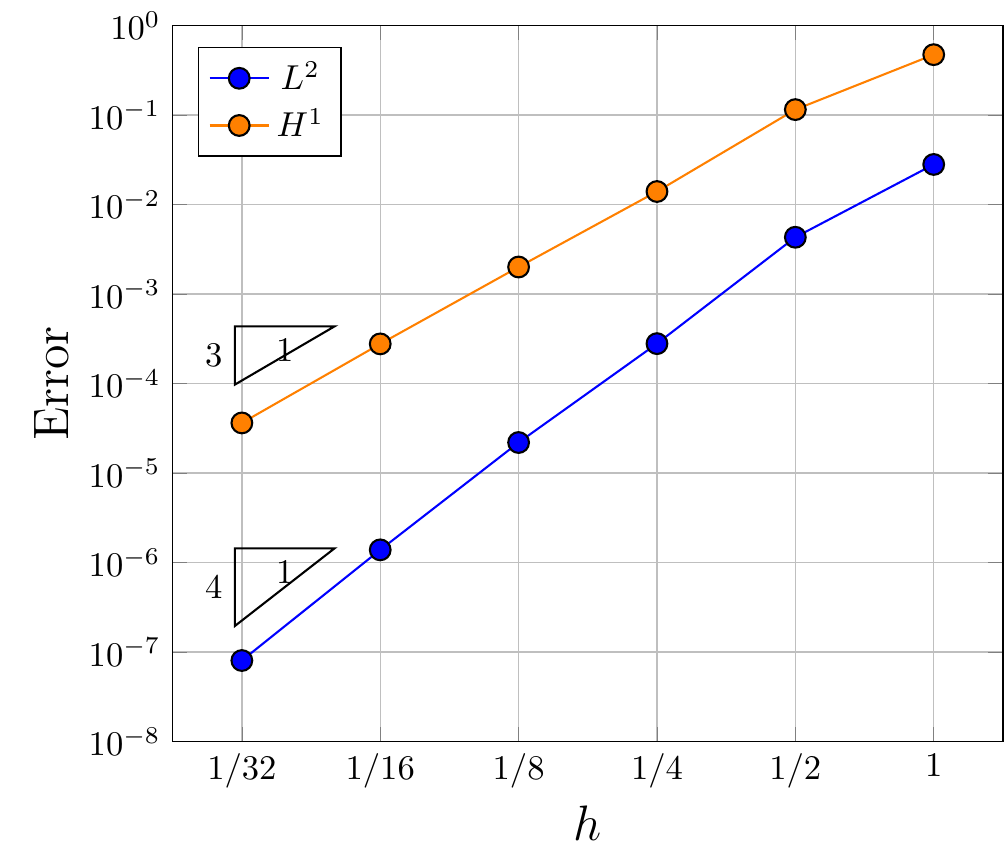} & \hspace{-4mm}
\includegraphics[width=0.32\textwidth]{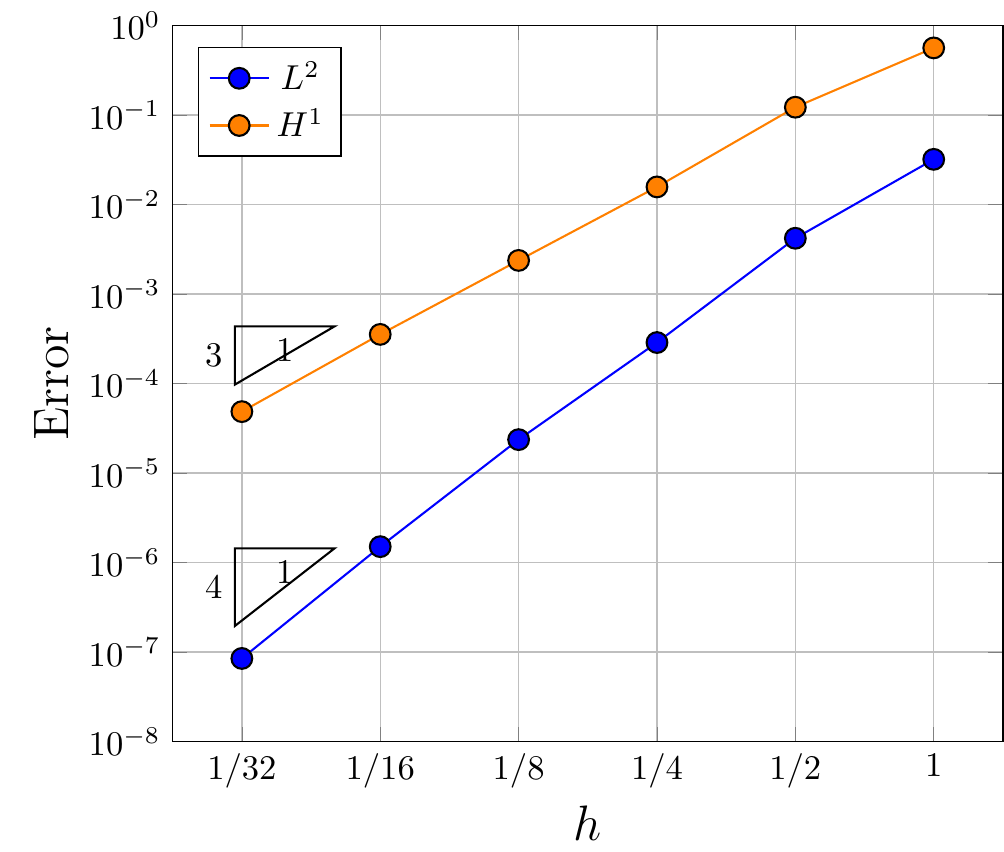} & \hspace{-4mm}
\includegraphics[width=0.32\textwidth]{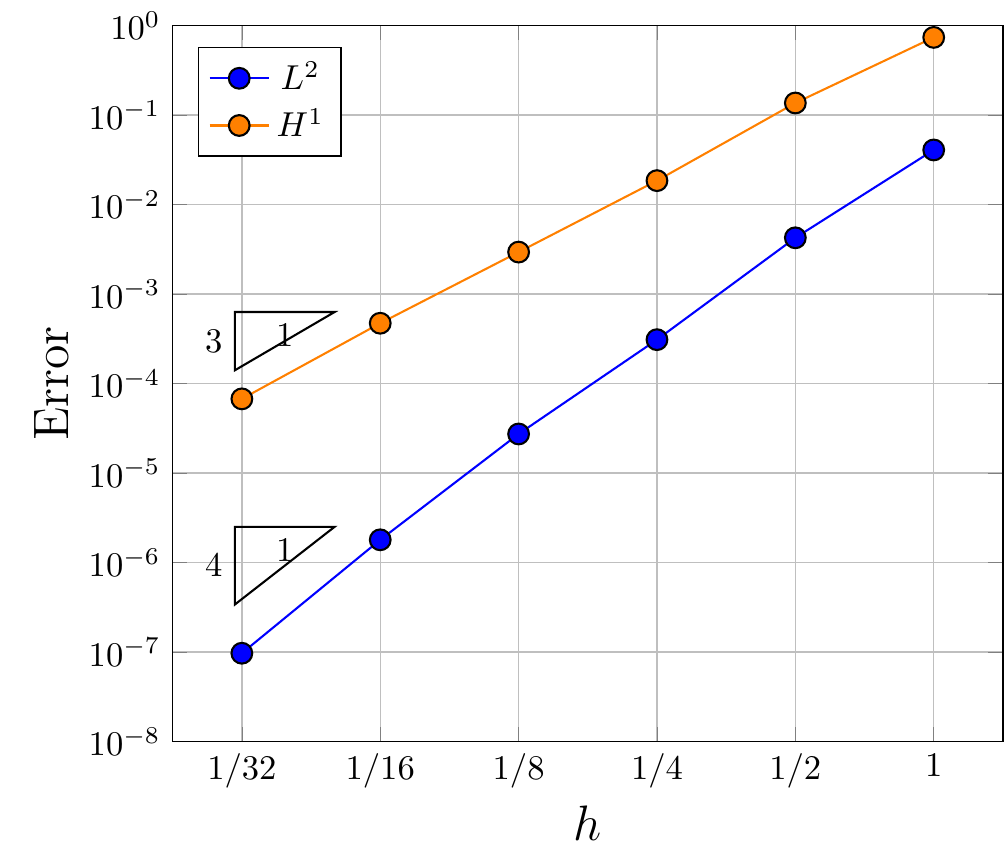}\\
(d) & (e) & (f)\\
\end{tabular}
\caption{Convergence plots using meshes with high-valence EPs. (a--c) The configurations of knot intervals around EPs, and (d--f) convergence plots corresponding to the input meshes in (a--c), respectively.}
\label{fig:highv}
\end{figure}

\begin{figure}[htb]
\centering
\includegraphics[width=0.6\textwidth]{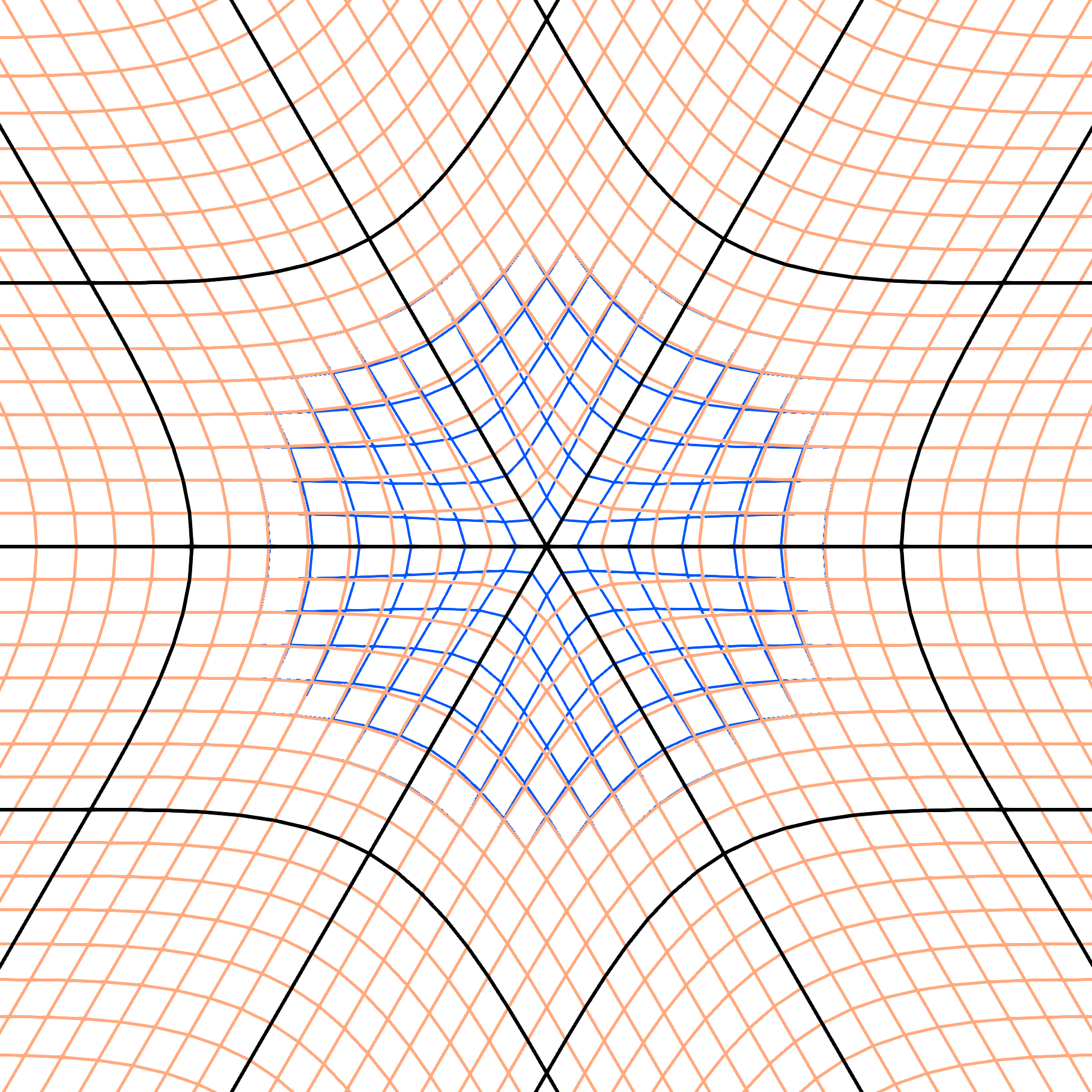}
\caption{Isoparametric lines around the valence-6 EP under uniform parameterization. Orange and blue curves are isoparametric lines corresponding to $\lambda=0.5$ and $\lambda=0.26$, respectively. Blue curves are not visible in most regions because they are overlaid with orange ones. Black curves indicate element boundaries in the physical domain.}
\label{fig:isoline}
\end{figure}

Now, we consider meshes with high-valence EPs (valence 6, 7 and 8), where each spoke edge is assigned a different knot interval. We again observe optimal convergence rates with $\lambda=0.26$; see Figure \ref{fig:highv}. Moreover, let us have a close look at how $\lambda$ influences parameterization around EPs. In particular, we compare isoparametric lines using two different $\lambda$'s ($0.5$ versus $0.26$) around a valence-6 EP, where the same input control mesh in Figure \ref{fig:highv}(a) is used in both cases. We find in Figure \ref{fig:isoline} that isoparametric lines are overlaid with one another in most regions, and with a smaller $\lambda$, the isoparametric lines (blue curves) are more bent towards the extraordinary surface point $s(0,0)$. Equivalently speaking, a smaller $\lambda$ yields smaller refined irregular elements in the physical domain. Therefore, the mesh around $s(0,0)$ becomes denser than that using a larger $\lambda$, and as a result, the asymptotic approximation error controlled by $s(0,0)$ can be reduced using such a denser mesh. Ideally, optimal convergence rates can be achieved by reducing $\lambda$, which indeed is the case in all our numerical tests when $\lambda=0.26$.

\begin{myremark}
Although the globally smooth tHNUS basis functions can be applied to solve 4th-order partial differential equations (PDEs), our preliminary tests only show suboptimal convergence in solving the biharmonic equation, where obtained convergence rates in terms of $L^2$-, $H^1$- and $H^2$-norm errors are around 2, 2, and 1, respectively. This is consistent with the result reported in \cite{ref:qzhang18}, where a thin-shell problem was solved and the reported convergence rates in $L^2$- and energy norm errors are 2 and 1, respectively. In other words, reducing $\lambda$ alone is not sufficient for high-order PDEs. We conjecture that to recover the optimal convergence in this case, we may need more degrees of freedom around EPs following similar ideas in \cite{c0spline, c03d}. However, this would further complicate the current subdivision framework, so we postpone related results in a follow-up work.
\end{myremark}

\section{Conclusions and future work}
\label{sec:con}

We have presented a tuned version of hybrid non-uniform subdivision, tHNUS, by introducing a parameter $\lambda \in (\frac{1}{4}, 1)$, which is also the second and third eigenvalues of the subdivision matrix. The tHNUS surface is proved to be $G^1$-continuous for any positive knot intervals and extraordinary vertices of any valence. The tHNUS surface has satisfactory shape quality for any $\lambda$ under non-uniform parameterization. However, the highest shape quality is achieved when $\lambda=0.5$. In other words, the original HNUS generally performs better in geometric modeling than tHNUS. On the other hand, tHNUS basis functions can achieve optimal convergence rates when $\lambda$ is reduced to $0.26$, regardless of which quadrature scheme is used and whether parameterizations are uniform or non-uniform around EPs.

In the future, we can extend tHNUS in the following three issues. First, converting an input quadrilateral mesh to its hybrid counterpart can be restricted locally to irregular regions without introducing zero-measure faces throughout the entire mesh. This can be done by allowing T-junctions~\cite{ref:sederberg04} in the hybrid mesh, but support of T-junctions in a hybrid mesh requires a much more sophisticated data structure to accommodate both polygonal faces and quadrilaterals with T-junctions. Second, tHNUS can be adapted to hierarchical splines \cite{ref:vuong11} due to its refinability property, where the initial level corresponds to the initial hybrid mesh. The construction of hierarchical tHNUS essentially follows those proposed for truncated hierarchical Catmull-Clark subdivision surfaces \cite{ref:wei15a, ref:wei15b}, but the differences lie in dealing with hybrid meshes and non-uniform knot intervals. Third, improving tHNUS to achieve optimal convergence rates in solving high-order PDEs is another challenging but very interesting direction to pursue. Currently, we can only show optimal convergence in solving the 2nd-order PDE. In the case of high-order PDEs, our preliminary tests suggest that it is not sufficient to tune $\lambda$ alone and additional treatment is needed. We plan to our investigation by adding more control points around extraordinary vertices.

\section*{Acknowledgements}

X. Wei was supported in part by ERC AdG project CHANGE n. 694515. X. Li was supported by the NSF of China (No.61872328), NKBRPC (2011CB302400), SRF for ROCS SE, and the Youth Innovation Promotion Association CAS. Y. Zhang was supported in part by the NSF grant CBET-1804929. T.J.R. Hughes was partially supported by the Office of Naval Research, USA (Grant Nos. N00014-17-1-2119 and N00014-13-1-0500).

\section*{Appendix}

The control points $P_i^{j,k}$ ($1\leq j,k\leq 3$) of the characteristic map are listed as follows.
\begin{tiny}
\begin{align*}
P_{i}^{1,1} &= -\frac{(\lambda-2) \left(\left(32 \lambda^3-100 \lambda^2+31 \lambda-2\right) p-12 \lambda^2 (v+w)\right)}{(\lambda-4) \lambda (4 \lambda-1) (8 \lambda-1)}\\
P_{i}^{1,2} &= \frac{6 \lambda^2 \left(\left(256 \lambda^3+136 \lambda^2-2251 \lambda-232\right) v+2 \left(128 \lambda^3-184 \lambda^2-149 \lambda+10\right) w\right)+\left(-4096 \lambda^6+18688 \lambda^5-14576 \lambda^4-22112 \lambda^3+8275 \lambda^2-608 \lambda+20\right) p}{(\lambda-4) \lambda (4 \lambda-1) (8 \lambda-1) (16 \lambda-1) (32 \lambda-1)}\\
P_{i}^{1,3} &= \frac{6 \lambda^2 \left(2 \left(320 \lambda^3-919 \lambda^2-800 \lambda+7\right) v+\left(160 \lambda^3-314 \lambda^2-13 \lambda+2\right) w\right)+\left(-2560 \lambda^6+15904 \lambda^5-29348 \lambda^4+3535 \lambda^3+928 \lambda^2-62 \lambda+2\right) p}{(\lambda-4) \lambda^2 (4 \lambda-1) (8 \lambda-1) (16 \lambda-1) (32 \lambda-1)}\\
P_{i}^{2,1} &= \frac{6 \lambda^2 \left(\left(256 \lambda^3-368 \lambda^2-298 \lambda+20\right) v+\left(256 \lambda^3+136 \lambda^2-2251 \lambda-232\right) w\right)+\left(-4096 \lambda^6+18688 \lambda^5-14576 \lambda^4-22112 \lambda^3+8275 \lambda^2-608 \lambda+20\right) p}{(\lambda-4) \lambda (4 \lambda-1) (8 \lambda-1) (16 \lambda-1) (32 \lambda-1)}\\
P_{i}^{2,2} &= \frac{6 \left(4096 \lambda^4+33024 \lambda^3-96320 \lambda^2-11271 \lambda-1160\right) \lambda^2 (v+w)+\left(-65536 \lambda^7-194560 \lambda^6+2362752 \lambda^5-4183824 \lambda^4+1165584 \lambda^3-71505 \lambda^2+976 \lambda+100\right) p}{(\lambda-4) \lambda (4 \lambda-1) (8 \lambda-1) (16 \lambda-1) (32 \lambda-1) (64 \lambda-1)} \\
P_{i}^{2,3} &= \frac{6 \lambda^2 \left(\left(74240 \lambda^4-139104 \lambda^3-145684 \lambda^2-8118 \lambda-7\right) v+\left(58880 \lambda^4-106224 \lambda^3-32914 \lambda^2-2208 \lambda-127\right) w\right)}{2 (\lambda-4) \lambda^2 (4 \lambda-1) (8 \lambda-1) (16 \lambda-1) (32 \lambda-1) (64 \lambda-1)}\\
&+\frac{\left(-942080 \lambda^7+4816384 \lambda^6-6465888 \lambda^5+394032 \lambda^4+345858 \lambda^3-33192 \lambda^2+1313 \lambda-1\right) p}{2 (\lambda-4) \lambda^2 (4 \lambda-1) (8 \lambda-1) (16 \lambda-1) (32 \lambda-1) (64 \lambda-1)}\\
P_{i}^{3,1} &= \frac{6 \lambda^2 \left(\left(160 \lambda^3-314 \lambda^2-13 \lambda+2\right) v+2 \left(320 \lambda^3-919 \lambda^2-800 \lambda+7\right) w\right)+\left(-2560 \lambda^6+15904 \lambda^5-29348 \lambda^4+3535 \lambda^3+928 \lambda^2-62 \lambda+2\right) p}{(\lambda-4) \lambda^2 (4 \lambda-1) (8 \lambda-1) (16 \lambda-1) (32 \lambda-1)} \\
P_{i}^{3,2} &= \frac{6 \lambda^2 \left(\left(58880 \lambda^4-106224 \lambda^3-32914 \lambda^2-2208 \lambda-127\right) v+\left(74240 \lambda^4-139104 \lambda^3-145684 \lambda^2-8118 \lambda-7\right) w\right)}{2 (\lambda-4) \lambda^2 (4 \lambda-1) (8 \lambda-1) (16 \lambda-1) (32 \lambda-1) (64 \lambda-1)}\\
&+\frac{\left(-942080 \lambda^7+4816384 \lambda^6-6465888 \lambda^5+394032 \lambda^4+345858 \lambda^3-33192 \lambda^2+1313 \lambda-1\right) p}{2 (\lambda-4) \lambda^2 (4 \lambda-1) (8 \lambda-1) (16 \lambda-1) (32 \lambda-1) (64 \lambda-1)}\\
P_{i}^{3,3} &= \frac{6 \left(25600 \lambda^4-30144 \lambda^3-61166 \lambda^2-5685 \lambda-236\right) \lambda^2 (v+w)+\left(-409600 \lambda^7+1923584 \lambda^6-1925280 \lambda^5-1010388 \lambda^4+470631 \lambda^3-36036 \lambda^2+1060 \lambda+16\right) p}{(\lambda-4) \lambda^2 (4 \lambda-1) (8 \lambda-1) (16 \lambda-1) (32 \lambda-1) (64 \lambda-1)}
\end{align*}
\end{tiny}

The expressions of all $S_{2}^{j, k}$ ($0\leq j,k\leq 3$) are listed as follows.
\begin{tiny}
\begin{align*}
S_{2}^{0,0} &= \frac{3 \lambda \left(28672 \lambda^4-44160 \lambda^3-104888 \lambda^2-4242 \lambda-59\right) v}{2 (\lambda-4) (4 \lambda-1) (8 \lambda-1) (16 \lambda-1) (32 \lambda-1) (64 \lambda-1)}+\frac{3 \lambda \left(-20480 \lambda^4+125568 \lambda^3+2296 \lambda^2-2886 \lambda-107\right) w}{2 (\lambda-4) (4 \lambda-1) (8 \lambda-1) (16 \lambda-1) (32 \lambda-1) (64 \lambda-1)}\\&+\frac{\left(-589824 \lambda^7+2557952 \lambda^6-1417856 \lambda^5-723088 \lambda^4+318940 \lambda^3-25264 \lambda^2+853 \lambda+3\right) p}{4 (\lambda-4) \lambda (4 \lambda-1) (8 \lambda-1) (16 \lambda-1) (32 \lambda-1) (64 \lambda-1)}\\
S_{2}^{1,0}&=-\frac{6 \lambda \left(14336 \lambda^4-53888 \lambda^3+58744 \lambda^2+2009 \lambda+30\right) v}{(\lambda-4) (4 \lambda-1) (8 \lambda-1) (16 \lambda-1) (32 \lambda-1) (64 \lambda-1)}-\frac{12 \lambda \left(5120 \lambda^4-11808 \lambda^3+1904 \lambda^2-234 \lambda+41\right) w}{(\lambda-4) (4 \lambda-1) (8 \lambda-1) (16 \lambda-1) (32 \lambda-1) (64 \lambda-1)}\\&+\frac{2 \left(81920 \lambda^7-467968 \lambda^6+851904 \lambda^5-540792 \lambda^4+116238 \lambda^3-7683 \lambda^2+211 \lambda+1\right) p}{(\lambda-4) \lambda (4 \lambda-1) (8 \lambda-1) (16 \lambda-1) (32 \lambda-1) (64 \lambda-1)}\\
S_{2}^{2,0}&=-\frac{\left(163840 \lambda^5-360448 \lambda^4+20512 \lambda^3+2732 \lambda^2+571 \lambda+47\right) w}{4 (\lambda-4) (4 \lambda-1) (8 \lambda-1) (16 \lambda-1) (32 \lambda-1) (64 \lambda-1)}-\frac{\left(458752 \lambda^6-1579008 \lambda^5+1185408 \lambda^4+775632 \lambda^3-25056 \lambda^2+567 \lambda-4\right) v}{8 (\lambda-4) \lambda (4 \lambda-1) (8 \lambda-1) (16 \lambda-1) (32 \lambda-1) (64 \lambda-1)}\\&+\frac{\left(2621440 \lambda^8-14696448 \lambda^7+25499648 \lambda^6-13590144 \lambda^5+1352256 \lambda^4+248436 \lambda^3-23683 \lambda^2+897 \lambda-5\right) p}{24 (\lambda-4) \lambda^2 (4 \lambda-1) (8 \lambda-1) (16 \lambda-1) (32 \lambda-1) (64 \lambda-1)}\\
S_{2}^{0,1} &=\frac{18 \lambda \left(2048 \lambda^4-1856 \lambda^3-7028 \lambda^2-669 \lambda-13\right) v}{(\lambda-4) (4 \lambda-1) (8 \lambda-1) (16 \lambda-1) (32 \lambda-1) (64 \lambda-1)}+\frac{18 \lambda \left(-2048 \lambda^4+12288 \lambda^3+1904 \lambda^2-556 \lambda-17\right) w}{(\lambda-4) (4 \lambda-1) (8 \lambda-1) (16 \lambda-1) (32 \lambda-1) (64 \lambda-1)}\\&-\frac{3 \left(32768 \lambda^7-129024 \lambda^6+47872 \lambda^5+108496 \lambda^4-38716 \lambda^3+3018 \lambda^2-95 \lambda-1\right) p}{(\lambda-4) \lambda (4 \lambda-1) (8 \lambda-1) (16 \lambda-1) (32 \lambda-1) (64 \lambda-1)}\\
S_{2}^{1,1} &= -\frac{6 \lambda \left(12288 \lambda^4-38912 \lambda^3+43904 \lambda^2+4127 \lambda+76\right) v}{(\lambda-4) (4 \lambda-1) (8 \lambda-1) (16 \lambda-1) (32 \lambda-1) (64 \lambda-1)}-\frac{12 \lambda \left(6144 \lambda^4-14080 \lambda^3+1036 \lambda^2-299 \lambda+80\right) w}{(\lambda-4) (4 \lambda-1) (8 \lambda-1) (16 \lambda-1) (32 \lambda-1) (64 \lambda-1)}\\&+\frac{2 \left(98304 \lambda^7-553984 \lambda^6+985152 \lambda^5-637296 \lambda^4+138348 \lambda^3-9009 \lambda^2+216 \lambda+4\right) p}{(\lambda-4) \lambda (4 \lambda-1) (8 \lambda-1) (16 \lambda-1) (32 \lambda-1) (64 \lambda-1)}\\
S_{2}^{2,1} &= -\frac{3 \left(32768 \lambda^5-93184 \lambda^4+70336 \lambda^3+61032 \lambda^2-210 \lambda+7\right) v}{2 (\lambda-4) (4 \lambda-1) (8 \lambda-1) (16 \lambda-1) (32 \lambda-1) (64 \lambda-1)}-\frac{3 \left(32768 \lambda^5-70656 \lambda^4-4768 \lambda^3+888 \lambda^2+236 \lambda+15\right) w}{2 (\lambda-4) (4 \lambda-1) (8 \lambda-1) (16 \lambda-1) (32 \lambda-1) (64 \lambda-1)}\\&+\frac{\left(524288 \lambda^8-2883584 \lambda^7+4827136 \lambda^6-2557248 \lambda^5+206544 \lambda^4+64080 \lambda^3-6056 \lambda^2+233 \lambda-1\right) p}{4 (\lambda-4) \lambda^2 (4 \lambda-1) (8 \lambda-1) (16 \lambda-1) (32 \lambda-1) (64 \lambda-1)}\\
S_{2}^{0,2}&=\frac{6 \lambda \left(4096 \lambda^4-1024 \lambda^3-19348 \lambda^2-1877 \lambda-117\right) v}{(\lambda-4) (4 \lambda-1) (8 \lambda-1) (16 \lambda-1) (32 \lambda-1) (64 \lambda-1)}+\frac{6 \lambda \left(-4096 \lambda^4+21888 \lambda^3+20552 \lambda^2-3786 \lambda-97\right) w}{(\lambda-4) (4 \lambda-1) (8 \lambda-1) (16 \lambda-1) (32 \lambda-1) (64 \lambda-1)}\\&+\frac{\left(-65536 \lambda^7+215040 \lambda^6+122880 \lambda^5-518496 \lambda^4+160368 \lambda^3-11706 \lambda^2+295 \lambda+9\right) p}{(\lambda-4) \lambda (4 \lambda-1) (8 \lambda-1) (16 \lambda-1) (32 \lambda-1) (64 \lambda-1)}\\
S_{2}^{1,2}&=-\frac{6 \lambda \left(8192 \lambda^4-28928 \lambda^3+34888 \lambda^2+4205 \lambda+228\right) v}{(\lambda-4) (4 \lambda-1) (8 \lambda-1) (16 \lambda-1) (32 \lambda-1) (64 \lambda-1)}-\frac{48 \lambda \left(1024 \lambda^4-2272 \lambda^3-868 \lambda^2-65 \lambda+39\right) w}{(\lambda-4) (4 \lambda-1) (8 \lambda-1) (16 \lambda-1) (32 \lambda-1) (64 \lambda-1)}\\&+\frac{2 \left(65536 \lambda^7-393216 \lambda^6+768768 \lambda^5-550584 \lambda^4+120330 \lambda^3-6531 \lambda^2+32 \lambda+12\right) p}{(\lambda-4) \lambda (4 \lambda-1) (8 \lambda-1) (16 \lambda-1) (32 \lambda-1) (64 \lambda-1)}\\
S_{2}^{2,2}&=-\frac{\left(32768 \lambda^5-100352 \lambda^4+78176 \lambda^3+82984 \lambda^2+1862 \lambda+7\right) v}{(\lambda-4) (4 \lambda-1) (8 \lambda-1) (16 \lambda-1) (32 \lambda-1) (64 \lambda-1)}-\frac{\left(32768 \lambda^5-63488 \lambda^4-49120 \lambda^3+2596 \lambda^2+845 \lambda+43\right) w}{(\lambda-4) (4 \lambda-1) (8 \lambda-1) (16 \lambda-1) (32 \lambda-1) (64 \lambda-1)}\\&+\frac{\left(524288 \lambda^8-2998272 \lambda^7+5300224 \lambda^6-2834112 \lambda^5-22440 \lambda^4+158118 \lambda^3-13784 \lambda^2+513 \lambda-1\right) p}{6 (\lambda-4) \lambda^2 (4 \lambda-1) (8 \lambda-1) (16 \lambda-1) (32 \lambda-1) (64 \lambda-1)}\\
S_{2}^{0,3}&=\frac{\left(393216 \lambda^7+233472 \lambda^6-2491392 \lambda^5-285864 \lambda^4-21918 \lambda^3-258 \lambda^2\right) v}{24 (\lambda-4) \lambda^2 (4 \lambda-1) (8 \lambda-1) (16 \lambda-1) (32 \lambda-1) (64 \lambda-1)}+\frac{\left(-393216 \lambda^7+1769472 \lambda^6+3983616 \lambda^5-415440 \lambda^4-81576 \lambda^3+1449 \lambda^2-12 \lambda\right) w}{24 (\lambda-4) \lambda^2 (4 \lambda-1) (8 \lambda-1) (16 \lambda-1) (32 \lambda-1) (64 \lambda-1)}\\&+\frac{\left(-1048576 \lambda^8+2555904 \lambda^7+6412288 \lambda^6-14197248 \lambda^5+3735840 \lambda^4-144900 \lambda^3-5573 \lambda^2+717 \lambda-1\right) p}{24 (\lambda-4) \lambda^2 (4 \lambda-1) (8 \lambda-1) (16 \lambda-1) (32 \lambda-1) (64 \lambda-1)}\\
S_{2}^{1,3}&=-\frac{\left(65536 \lambda^5-262144 \lambda^4+334720 \lambda^3+52988 \lambda^2+3718 \lambda+41\right) v}{2 (\lambda-4) (4 \lambda-1) (8 \lambda-1) (16 \lambda-1) (32 \lambda-1) (64 \lambda-1)}-\frac{\left(65536 \lambda^5-139264 \lambda^4-137984 \lambda^3-23368 \lambda^2+8098 \lambda-7\right) w}{2 (\lambda-4) (4 \lambda-1) (8 \lambda-1) (16 \lambda-1) (32 \lambda-1) (64 \lambda-1)}\\&+\frac{\left(1048576 \lambda^8-6782976 \lambda^7+14569472 \lambda^6-11210496 \lambda^5+2223408 \lambda^4-32748 \lambda^3-8632 \lambda^2+615 \lambda+1\right) p}{12 (\lambda-4) \lambda^2 (4 \lambda-1) (8 \lambda-1) (16 \lambda-1) (32 \lambda-1) (64 \lambda-1)}\\
S_{2}^{2,3}&=-\frac{\left(65536 \lambda^5-219136 \lambda^4+173056 \lambda^3+229220 \lambda^2+9745 \lambda+131\right) v}{3 (\lambda-4) (4 \lambda-1) (8 \lambda-1) (16 \lambda-1) (32 \lambda-1) (64 \lambda-1)}-\frac{\left(65536 \lambda^5-108544 \lambda^4-208832 \lambda^3-11944 \lambda^2+6694 \lambda+239\right) w}{3 (\lambda-4) (4 \lambda-1) (8 \lambda-1) (16 \lambda-1) (32 \lambda-1) (64 \lambda-1)}\\&+\frac{\left(1048576 \lambda^8-6291456 \lambda^7+11761664 \lambda^6-6099072 \lambda^5-1021344 \lambda^4+664680 \lambda^3-54730 \lambda^2+1881 \lambda+7\right) p}{18 (\lambda-4) \lambda^2 (4 \lambda-1) (8 \lambda-1) (16 \lambda-1) (32 \lambda-1) (64 \lambda-1)}
\end{align*}
\end{tiny}

The expressions of all $T_{2}^{j, k}$ ($0\leq j,k\leq 3$) are listed as follows.
\begin{tiny}
\begin{align*}
T_{2}^{0,0} &= -\frac{3 \lambda \left(20480 \lambda^4-125568 \lambda^3-2296 \lambda^2+2886 \lambda+107\right) v}{2 (\lambda-4) (4 \lambda-1) (8 \lambda-1) (16 \lambda-1) (32 \lambda-1) (64 \lambda-1)}-\frac{3 \lambda \left(-28672 \lambda^4+44160 \lambda^3+104888 \lambda^2+4242 \lambda+59\right) w}{2 (\lambda-4) (4 \lambda-1) (8 \lambda-1) (16 \lambda-1) (32 \lambda-1) (64 \lambda-1)}\\
&+\frac{\left(-589824 \lambda^7+2557952 \lambda^6-1417856 \lambda^5-723088 \lambda^4+318940 \lambda^3-25264 \lambda^2+853 \lambda+3\right) p}{4 (\lambda-4) \lambda (4 \lambda-1) (8 \lambda-1) (16 \lambda-1) (32 \lambda-1) (64 \lambda-1)}\\
T_{2}^{1,0}&=-\frac{18 \lambda \left(2048 \lambda^4-12288 \lambda^3-1904 \lambda^2+556 \lambda+17\right) v}{(\lambda-4) (4 \lambda-1) (8 \lambda-1) (16 \lambda-1) (32 \lambda-1) (64 \lambda-1)}-\frac{18 \lambda \left(-2048 \lambda^4+1856 \lambda^3+7028 \lambda^2+669 \lambda+13\right) w}{(\lambda-4) (4 \lambda-1) (8 \lambda-1) (16 \lambda-1) (32 \lambda-1) (64 \lambda-1)}\\&-\frac{3 \left(32768 \lambda^7-129024 \lambda^6+47872 \lambda^5+108496 \lambda^4-38716 \lambda^3+3018 \lambda^2-95 \lambda-1\right) p}{(\lambda-4) \lambda (4 \lambda-1) (8 \lambda-1) (16 \lambda-1) (32 \lambda-1) (64 \lambda-1)}\\
T_{2}^{2,0}&=-\frac{6 \lambda \left(4096 \lambda^4-21888 \lambda^3-20552 \lambda^2+3786 \lambda+97\right) v}{(\lambda-4) (4 \lambda-1) (8 \lambda-1) (16 \lambda-1) (32 \lambda-1) (64 \lambda-1)}-\frac{6 \lambda \left(-4096 \lambda^4+1024 \lambda^3+19348 \lambda^2+1877 \lambda+117\right) w}{(\lambda-4) (4 \lambda-1) (8 \lambda-1) (16 \lambda-1) (32 \lambda-1) (64 \lambda-1)}\\&+\frac{\left(-65536 \lambda^7+215040 \lambda^6+122880 \lambda^5-518496 \lambda^4+160368 \lambda^3-11706 \lambda^2+295 \lambda+9\right) p}{(\lambda-4) \lambda (4 \lambda-1) (8 \lambda-1) (16 \lambda-1) (32 \lambda-1) (64 \lambda-1)}\\
T_{2}^{3, 0} &=-\frac{\left(-65536 \lambda^5-38912 \lambda^4+415232 \lambda^3+47644 \lambda^2+3653 \lambda+43\right) w}{4 (\lambda-4) (4 \lambda-1) (8 \lambda-1) (16 \lambda-1) (32 \lambda-1) (64 \lambda-1)}-\frac{\left(131072 \lambda^6-589824 \lambda^5-1327872 \lambda^4+138480 \lambda^3+27192 \lambda^2-483 \lambda+4\right) v}{8 (\lambda-4) \lambda (4 \lambda-1) (8 \lambda-1) (16 \lambda-1) (32 \lambda-1) (64 \lambda-1)}\\&+\frac{\left(-1048576 \lambda^8+2555904 \lambda^7+6412288 \lambda^6-14197248 \lambda^5+3735840 \lambda^4-144900 \lambda^3-5573 \lambda^2+717 \lambda-1\right) p}{24 (\lambda-4) \lambda^2 (4 \lambda-1) (8 \lambda-1) (16 \lambda-1) (32 \lambda-1) (64 \lambda-1)}\\
T_{2}^{0, 1} &= -\frac{12 \lambda \left(5120 \lambda^4-11808 \lambda^3+1904 \lambda^2-234 \lambda+41\right) v}{(\lambda-4) (4 \lambda-1) (8 \lambda-1) (16 \lambda-1) (32 \lambda-1) (64 \lambda-1)}-\frac{6 \lambda \left(14336 \lambda^4-53888 \lambda^3+58744 \lambda^2+2009 \lambda+30\right) w}{(\lambda-4) (4 \lambda-1) (8 \lambda-1) (16 \lambda-1) (32 \lambda-1) (64 \lambda-1)}\\&+\frac{2 \left(81920 \lambda^7-467968 \lambda^6+851904 \lambda^5-540792 \lambda^4+116238 \lambda^3-7683 \lambda^2+211 \lambda+1\right) p}{(\lambda-4) \lambda (4 \lambda-1) (8 \lambda-1) (16 \lambda-1) (32 \lambda-1) (64 \lambda-1)}\\
T_{2}^{1,1}&=-\frac{12 \lambda \left(6144 \lambda^4-14080 \lambda^3+1036 \lambda^2-299 \lambda+80\right) v}{(\lambda-4) (4 \lambda-1) (8 \lambda-1) (16 \lambda-1) (32 \lambda-1) (64 \lambda-1)}-\frac{6 \lambda \left(12288 \lambda^4-38912 \lambda^3+43904 \lambda^2+4127 \lambda+76\right) w}{(\lambda-4) (4 \lambda-1) (8 \lambda-1) (16 \lambda-1) (32 \lambda-1) (64 \lambda-1)}\\&+\frac{2 \left(98304 \lambda^7-553984 \lambda^6+985152 \lambda^5-637296 \lambda^4+138348 \lambda^3-9009 \lambda^2+216 \lambda+4\right) p}{(\lambda-4) \lambda (4 \lambda-1) (8 \lambda-1) (16 \lambda-1) (32 \lambda-1) (64 \lambda-1)}\\
T_{2}^{2,1}&=-\frac{48 \lambda \left(1024 \lambda^4-2272 \lambda^3-868 \lambda^2-65 \lambda+39\right) v}{(\lambda-4) (4 \lambda-1) (8 \lambda-1) (16 \lambda-1) (32 \lambda-1) (64 \lambda-1)}-\frac{6 \lambda \left(8192 \lambda^4-28928 \lambda^3+34888 \lambda^2+4205 \lambda+228\right) w}{(\lambda-4) (4 \lambda-1) (8 \lambda-1) (16 \lambda-1) (32 \lambda-1) (64 \lambda-1)}\\&+\frac{2 \left(65536 \lambda^7-393216 \lambda^6+768768 \lambda^5-550584 \lambda^4+120330 \lambda^3-6531 \lambda^2+32 \lambda+12\right) p}{(\lambda-4) \lambda (4 \lambda-1) (8 \lambda-1) (16 \lambda-1) (32 \lambda-1) (64 \lambda-1)}\\
T_{2}^{3,1} &= -\frac{\left(65536 \lambda^5-139264 \lambda^4-137984 \lambda^3-23368 \lambda^2+8098 \lambda-7\right) v}{2 (\lambda-4) (4 \lambda-1) (8 \lambda-1) (16 \lambda-1) (32 \lambda-1) (64 \lambda-1)}-\frac{\left(65536 \lambda^5-262144 \lambda^4+334720 \lambda^3+52988 \lambda^2+3718 \lambda+41\right) w}{2 (\lambda-4) (4 \lambda-1) (8 \lambda-1) (16 \lambda-1) (32 \lambda-1) (64 \lambda-1)}\\&+\frac{\left(1048576 \lambda^8-6782976 \lambda^7+14569472 \lambda^6-11210496 \lambda^5+2223408 \lambda^4-32748 \lambda^3-8632 \lambda^2+615 \lambda+1\right) p}{12 (\lambda-4) \lambda^2 (4 \lambda-1) (8 \lambda-1) (16 \lambda-1) (32 \lambda-1) (64 \lambda-1)}\\
T_{2}^{0,2} &= \frac{\left(-983040 \lambda^7+2162688 \lambda^6-123072 \lambda^5-16392 \lambda^4-3426 \lambda^3-282 \lambda^2\right) v}{24 (\lambda-4) \lambda^2 (4 \lambda-1) (8 \lambda-1) (16 \lambda-1) (32 \lambda-1) (64 \lambda-1)}+\frac{\left(-1376256 \lambda^7+4737024 \lambda^6-3556224 \lambda^5-2326896 \lambda^4+75168 \lambda^3-1701 \lambda^2+12 \lambda\right) w}{24 (\lambda-4) \lambda^2 (4 \lambda-1) (8 \lambda-1) (16 \lambda-1) (32 \lambda-1) (64 \lambda-1)}\\&+\frac{\left(2621440 \lambda^8-14696448 \lambda^7+25499648 \lambda^6-13590144 \lambda^5+1352256 \lambda^4+248436 \lambda^3-23683 \lambda^2+897 \lambda-5\right) p}{24 (\lambda-4) \lambda^2 (4 \lambda-1) (8 \lambda-1) (16 \lambda-1) (32 \lambda-1) (64 \lambda-1)}\\
T_{2}^{1,2} &=-\frac{3 \left(32768 \lambda^5-70656 \lambda^4-4768 \lambda^3+888 \lambda^2+236 \lambda+15\right) v}{2 (\lambda-4) (4 \lambda-1) (8 \lambda-1) (16 \lambda-1) (32 \lambda-1) (64 \lambda-1)}-\frac{3 \left(32768 \lambda^5-93184 \lambda^4+70336 \lambda^3+61032 \lambda^2-210 \lambda+7\right) w}{2 (\lambda-4) (4 \lambda-1) (8 \lambda-1) (16 \lambda-1) (32 \lambda-1) (64 \lambda-1)}\\&+\frac{\left(524288 \lambda^8-2883584 \lambda^7+4827136 \lambda^6-2557248 \lambda^5+206544 \lambda^4+64080 \lambda^3-6056 \lambda^2+233 \lambda-1\right) p}{4 (\lambda-4) \lambda^2 (4 \lambda-1) (8 \lambda-1) (16 \lambda-1) (32 \lambda-1) (64 \lambda-1)}\\
T_{2}^{2,2} &= -\frac{\left(32768 \lambda^5-63488 \lambda^4-49120 \lambda^3+2596 \lambda^2+845 \lambda+43\right) v}{(\lambda-4) (4 \lambda-1) (8 \lambda-1) (16 \lambda-1) (32 \lambda-1) (64 \lambda-1)}-\frac{\left(32768 \lambda^5-100352 \lambda^4+78176 \lambda^3+82984 \lambda^2+1862 \lambda+7\right) w}{(\lambda-4) (4 \lambda-1) (8 \lambda-1) (16 \lambda-1) (32 \lambda-1) (64 \lambda-1)}\\&+\frac{\left(524288 \lambda^8-2998272 \lambda^7+5300224 \lambda^6-2834112 \lambda^5-22440 \lambda^4+158118 \lambda^3-13784 \lambda^2+513 \lambda-1\right) p}{6 (\lambda-4) \lambda^2 (4 \lambda-1) (8 \lambda-1) (16 \lambda-1) (32 \lambda-1) (64 \lambda-1)}\\
T_{2}^{3,2} &=-\frac{\left(65536 \lambda^5-108544 \lambda^4-208832 \lambda^3-11944 \lambda^2+6694 \lambda+239\right) v}{3 (\lambda-4) (4 \lambda-1) (8 \lambda-1) (16 \lambda-1) (32 \lambda-1) (64 \lambda-1)}-\frac{\left(65536 \lambda^5-219136 \lambda^4+173056 \lambda^3+229220 \lambda^2+9745 \lambda+131\right) w}{3 (\lambda-4) (4 \lambda-1) (8 \lambda-1) (16 \lambda-1) (32 \lambda-1) (64 \lambda-1)}\\&+\frac{\left(1048576 \lambda^8-6291456 \lambda^7+11761664 \lambda^6-6099072 \lambda^5-1021344 \lambda^4+664680 \lambda^3-54730 \lambda^2+1881 \lambda+7\right) p}{18 (\lambda-4) \lambda^2 (4 \lambda-1) (8 \lambda-1) (16 \lambda-1) (32 \lambda-1) (64 \lambda-1)}
\end{align*}
\end{tiny}

The following mathematica code has been used to compute the above control points.
\lstinputlisting{mathematical/hybrid_subd.m}

\bibliographystyle{plain}
\bibliography{InterSubd}
\end{document}